\makeatletter\@addtoreset {equation}{section}\makeatother
\theoremstyle{plain}
\newtheorem{definition}{Definition}[section]
\newtheorem{theorem}[definition]{Theorem}
\newtheorem{corollary}[definition]{Corollary}
\newtheorem{lemma}[definition]{Lemma}
\newtheorem{remark}[definition]{Remark}
\DeclareMathOperator{\sech}{sech}
\newenvironment{proof1}%
{\begin{trivlist} \item[]{\em Proof }}%
{\hspace*{\fill}$\rule{.3\baselineskip}{.35\baselineskip}$\end{trivlist}}
\begin{document}

\title[Nonlinear Instability of Half-Solitons on Star Graphs]
{Nonlinear Instability of Half-Solitons on Star Graphs}

\author{Adilbek Kairzhan}
\address{Department of Mathematics, McMaster University, Hamilton, Ontario  L8S 4K1, Canada}
\email{kairzhaa@math.mcmaster.ca}

\author{Dmitry Pelinovsky}
\address{Department of Mathematics, McMaster University, Hamilton, Ontario  L8S 4K1, Canada}
\email{dmpeli@math.mcmaster.ca}

\thanks{This research was supported by the NSERC Discovery Grant.}
\date{\today}

\begin{abstract}
We consider a half-soliton stationary state of the nonlinear Schr\"{o}dinger equation with the power nonlinearity
on a star graph consisting of $N$ edges and a single vertex. For the subcritical power nonlinearity,
the half-soliton state is a degenerate critical point of the action functional under the mass constraint such that
the second variation is nonnegative. By using normal forms, we prove that the degenerate critical point
is a nonlinear saddle point, for which the small perturbations to the half-soliton state grow slowly in time resulting
in the nonlinear instability of the half-soliton state. The result holds for any $N \geq 3$ and
arbitrary subcritical power nonlinearity. It gives a precise dynamical characterization of the previous result
of Adami {\em et al.}, where the half-soliton state was shown to be a saddle point of the
action functional under the mass constraint for $N = 3$ and for cubic nonlinearity.
\end{abstract}


\maketitle

\section{Introduction}

In many realistic physical experiments involving wave propagation in thin waveguides,
multi-dimensional models can be reduced approximately to the one-dimensional PDEs on graphs
\cite{Beck,Joly1,Joly2,Kuch}. Similarly, quantum nanowires and other thin structures
in nanotechnology can be described by the one-dimensional Schr\"{o}dinger equation on graphs \cite{GS}.

Spectral properties of Laplacian and other linear operators on graphs have been intensively studied
in the past twenty year \cite{Kuchment,Exner}.
The time evolution of linear PDEs on graphs is well defined by the standard semi-group theory,
once a self-adjoint extension of the graph Laplacian is constructed. On the other hand,
the time evolution of nonlinear PDEs on graphs is a more challenging problem involving interplay
between nonlinear analysis, geometry, and the spectral theory of non-self-adjoint operators.
The nonlinear PDEs on graphs, mostly the nonlinear Schr\"{o}dinger equation (NLS), has been studied
in the past five years in the context of existence, stability, and propagation of solitary waves \cite{Noja}.

In a series of papers \cite{AdamiEPL,AdamiJPA,AdamiAH,AdamiJDE1,AdamiJDE2}, Adami, Cacciapuoti, Finco, and Noja
analyzed variational properties of stationary states on a star graph, which is the union of $N$ half-lines (edges) connected
at a single vertex. For the standard Kirchhoff boundary conditions at the vertex and for odd $N$,
there is only one stationary state of the NLS on the star graph. This state is represented by the half-solitons
along each edge glued by their unique maxima at the vertex. By using a one-parameter deformation
of the NLS energy constrained by the fixed mass, it was shown that
the half-soliton state is a saddle point of the constrained NLS energy \cite{AdamiJPA}. On the other hand,
by adding a focusing delta impurity to the vertex, it was proven that there exists a global minimizer
of the constrained NLS energy for a sufficiently small mass below the critical mass \cite{AdamiEPL,AdamiAH,AdamiJDE1}.
This minimizer coincides with the $N$-tail state symmetric under exchange of edges,
which has monotonically decaying tails and which becomes the half-soliton state if the
delta impurity vanishes. In the concluding paper \cite{AdamiJDE2}, it was proven that although
the constrained minimization problem admits no global minimizers for a sufficiently large mass
above the critical mass, the $N$-tail state symmetric under exchange of edges
is still a local minimizer of the constrained NLS energy when a focusing delta impurity is added to the vertex.

Due to local minimization property, the $N$-tail state symmetric under exchange of edges is
orbitally stable in the time evolution of the NLS in the presence of the focusing delta impurity. Although
the second variation of the constrained energy was mentioned in the first work \cite{AdamiEPL},
the authors obtained all the variational results in \cite{AdamiAH,AdamiJDE1,AdamiJDE2} from
the energy formulation avoiding the linearization procedure. In the same way, the
saddle point geometry of energy at the half-soliton state in the case of vanishing delta impurity
was not related in \cite{AdamiJPA} to the instability of the half-soliton state in the time evolution of the NLS.
It is quite well known that the saddle point geometry does not necessarily imply instability of
stationary states in Hamiltonian systems because of the presence of neutrally stable modes of negative
energy \cite{Kapitula}.

The recent works of Adami, Serra, and Tilli \cite{AdamiCV,AdamiJFA} were devoted to
the existence of ground states on the unbounded graphs that are connected to infinity after removal
of any edge. It was proven that the infimum of the constrained NLS energy on the unbounded graph coincides with the
infimum of the constrained NLS energy on the infinite line and it is not achieved (that is, no ground state exists)
for every such a graph with the exception of graphs isometric to the real line \cite{AdamiCV}.
The reason why the infimum is not achieved is a possibility to minimize the constrained NLS energy by a family
of NLS solitary waves escaping to infinity along one edge of the graph. The star graph
with vanishing delta impurity is an example of the unbounded graphs with no ground states, moreover,
the constrained NLS energy of the half-soliton state is strictly greater than its infimum.
Thus, the study in \cite{AdamiCV} provides a general argument of the computations in \cite{AdamiJPA}, where
it is shown that the one-parameter deformation of the half-soliton state with the fixed mass
reduces the NLS energy and connects the half-soliton state with the solitary wave escaping along one edge of the star graph.

Further works on existence and stability of stationary states on the unbounded graphs have been
developed in the context of the logarithmic NLS equation \cite{Ardila}, the power NLS equation with $\delta'$ interactions
\cite{Pava}, the power NLS equation on the tadpole graph \cite{NPS}, and the cubic NLS equation on the
periodic graph \cite{GilgPS,PS}.

In the present work, we provide a dynamical characterization of the result in \cite{AdamiJPA}
for the NLS with the power nonlinearity and in the case of an arbitrary star graph.
By using dynamical system methods (in particular, normal forms),
we will verify that the half-soliton state is the saddle point of the constrained NLS energy
on the star graph and moreover it is dynamically unstable due to the slow growth of perturbations.
This nonlinear instability is likely to result in the destruction of the half-soliton state pinned to the vertex
and the formation of a solitary wave escaping to infinity along one edge of the star graph.

Since the nonlinear saddle points are rarely met in applications of the NLS equations,
it is the first time to the best of our knowledge when the energy method is adopted
to the proof of the nonlinear instability of the stationary states.

The paper is organized as follows. Section 2 states the main results for the NLS equation on the star graph.
Positivity of the second variation of the action functional is proven in Section 3.
Saddle point geometry near the half-soliton state is proven with normal forms
in Section 4. Dynamical characterization of the nonlinear instability of the half-soliton state is developed
with normal forms and energy estimates in Section 5.

\section{Main results}

Let $\Gamma$ be a star graph, which is constructed by attaching $N$ half-lines
at a common vertex. Let us choose the vertex as the origin and parameterize each edge of $\Gamma$
by $\mathbb{R}^+$. The Hilbert space on the graph $\Gamma$ is given by
$$
L^2(\Gamma) = \oplus_{j=1}^N L^2(\mathbb{R}^+).
$$
Elements in $L^2(\Gamma)$ are represented in the componentwise sense as vectors $\Psi = (\psi_1, \psi_2, \dots, \psi_N)^T$
of $L^2(\mathbb{R}^+)$-functions with each component corresponding to one edge. The squared norm of
such $L^2(\Gamma)$-functions is given by
$$
\| \Psi \|_{L^2(\Gamma)}^2 := \sum_{j=1}^N \| \psi_j \|_{L^2(\mathbb{R}^+)}^2.
$$
Similarly, we define the $L^2$-based Sobolev spaces on the graph $\Gamma$
$$
H^k(\Gamma) = \oplus_{j=1}^N H^k(\mathbb{R}^+), \quad k \in \mathbb{N}
$$
and equip them with suitable boundary conditions at the vertex.
For the weak formulation of the NLS on $\Gamma$, we define
$H^1_{\Gamma}$ by using the continuity boundary conditions as in
\begin{equation}
\label{H1}
H_\Gamma^1 := \{ \Psi \in H^1(\Gamma): \quad \psi_1(0) = \psi_2(0) = \dots = \psi_N(0)\},
\end{equation}
whereas for the strong formulation of the NLS on $\Gamma$, we define
$H^2_{\Gamma}$ by using the {\em Kirchhoff} boundary conditions as in
\begin{equation}
\label{H2}
H_\Gamma^2 := \left\{ \Psi \in H^2(\Gamma): \quad \psi_1(0) = \psi_2(0) = \dots = \psi_N(0), \quad \sum_{j=1}^N \psi_j'(0) = 0 \right\}.
\end{equation}
The dual space to $H^1_{\Gamma}$ is $H^{-1}_{\Gamma} := H^{-1}(\Gamma)$.

We consider the NLS equation on the star graph $\Gamma$ with the power nonlinearity in the normalized form,
\begin{equation} \label{eq1}
 i \frac{\partial \Psi}{\partial t} = - \Delta \Psi - (p+1) |\Psi|^{2p} \Psi,
\end{equation}
where $p>0$, $\Psi = \Psi(t,x)$, $\Delta \Psi = (\psi_1'',\psi_2'',\dots,\psi_N'')^T$ is the Laplacian operator
defined in the componentwise sense with primes denoting derivatives in $x$,
and the nonlinear term $|\Psi|^{2p} \Psi$ is interpreted as a symbol for
$(|\psi_1|^{2p}\psi_1, |\psi_2|^{2p}\psi_2, \dots, |\psi_N|^{2p} \psi_N)^T$.

In the weak formulation, stationary states of the NLS are defined as
critical points of the action functional $\Lambda_{\omega}(\Psi) := E(\Psi) + \omega Q(\Psi)$
in the energy space $H^1_{\Gamma}$, where $\omega \in \mathbb{R}$ is a free parameter, whereas
\begin{equation}
\label{energy}
E(\Psi) = \| \Psi' \|_{L^2(\Gamma)}^2 - \| \Psi \|_{L^{2p+2}(\Gamma)}^{2p+2},
\quad Q(\Psi) = \| \Psi \|_{L^2(\Gamma)}^2
\end{equation}
are the energy and mass functionals, respectively. The local well-posedness of the NLS evolution in $H^1_{\Gamma}$
follows by a standard contraction method. The energy $E(\Psi)$ and mass $Q(\Psi)$ are constants of motion in the
time evolution of the NLS flow in $H^1_{\Gamma}$. See Propositions 2.1 and 2.2 in \cite{AdamiJDE1}.

\begin{remark}
\label{rem-global}
The local solutions of the NLS in $H^1_{\Gamma}$ are extended globally in time by the energy conservation
and Gagliardo--Nirenberg inequality for the $L^2$-subcritical power nonlinearity with $p \in (0,2)$.
On the other hand, local solutions to the NLS
are known to blow up in a finite time in the $H^1(\Gamma)$ norm for $p = 2$ (critical nonlinearity)
and $p > 2$ (supercritical nonlinearity).
\end{remark}

In the strong formulation, stationary states of the NLS are given by the standing wave solutions of the form
$$
\Psi(t,x) = e^{i\omega t} \Phi_{\omega}(x), \quad \Phi_{\omega} \in H^2_{\Gamma},
$$
where $(\omega,\Phi_{\omega})$ are real-valued solutions of the stationary NLS equation,
\begin{equation}\label{eq2}
-\Delta \Phi_{\omega} - (p+1) |\Phi_{\omega}|^{2p} \Phi_{\omega} = - \omega \Phi_{\omega}, \quad \Phi_{\omega} \in H^2_{\Gamma}.
\end{equation}
The weak and strong formulations of the stationary states of the NLS on $\Gamma$ are equivalent to each other because
the Kirchhoff conditions in $H^2_{\Gamma}$ are {\em natural} boundary conditions
for critical points of $\Lambda_{\omega}$ in $H^1_{\Gamma}$.

No solution $\Phi_{\omega} \in H^2_{\Gamma}$ to the stationary NLS equation (\ref{eq2}) exists for $\omega \leq 0$,
because $\sigma(-\Delta) \geq 0$ in $L^2(\Gamma)$ and $\Phi_{\omega}(x), \Phi_{\omega}'(x) \to 0$ as $x \to \infty$
if $\Phi_{\omega} \in H^2_{\Gamma}$ by Sobolev's embedding theorems. Therefore, we consider $\omega > 0$
in the stationary NLS equation (\ref{eq2}). Since $\Gamma$ consists of edges with the parametrization on $\mathbb{R}^+$,
the scaling transformation
\begin{equation}
\label{scaling-transform}
\Phi_{\omega}(x) = \omega^{\frac{1}{2p}} \Phi(z), \quad z = \omega^{\frac{1}{2}} x
\end{equation}
can be used to scale the positive parameter $\omega$ to unity. The normalized profile $\Phi$ is now a solution
of the stationary NLS equation
\begin{equation}\label{eq3}
-\Delta \Phi + \Phi - (p+1) |\Phi|^{2p} \Phi = 0, \quad \Phi \in H^2_{\Gamma}.
\end{equation}
The stationary NLS equation (\ref{eq3}) has a particular solution
\begin{equation}
\label{half-soliton}
\Phi(x) = \phi(x) \left( \begin{array}{c} 1 \\ 1 \\ \vdots \\ 1 \end{array} \right) \quad \mbox{\rm with} \;\;
\phi(x) = \sech^{\frac{1}{p}}(px).
\end{equation}
This solution is labeled as the {\em half-soliton state}.
If $N$ is odd, the half-soliton state is a unique solution to the stationary NLS equation (\ref{eq3}) in $H^2_{\Gamma}$,
whereas if $N$ is even, there exists additional solutions with the translational parameters.
See Theorem 5 in \cite{AdamiJDE1}. In what follows, we only consider the half-soliton state for any $N \geq 3$.

Our main results are given as follows. Thanks to the scaling transformation, we set $\omega = 1$
and use notations $\Lambda$ and $\Phi$ for $\Lambda_{\omega=1}$ and $\Phi_{\omega = 1}$.

\begin{theorem}
\label{theorem-positivity}
Let $\Lambda''(\Phi)$ be the Hessian operator for the second variation of
$\Lambda(\Psi)$ at $\Psi = \Phi$ in $H^1_{\Gamma}$.
For every $p \in (0,2)$, it is true that
$\langle \Lambda''(\Phi) V, V \rangle_{L^2(\Gamma)} \geq 0$
for every $V \in H^1_{\Gamma} \cap L^2_c$, where
\begin{equation}
\label{constraint}
L^2_c := \left\{ V \in L^2(\Gamma) : \quad \langle V, \Phi \rangle_{L^2(\Gamma)} = 0 \right\}.
\end{equation}
Moreover, $\langle \Lambda''(\Phi) V, V \rangle_{L^2(\Gamma)} = 0$
if and only if $V \in H^1_{\Gamma} \cap L^2_c$ belongs to
a $(N-1)$-dimensional subspace of $L^2_c$ spanned by an orthogonal
set $\{ U^{(1)}, U^{(2)}, \dots, U^{(N-1)} \}$.
Consequently, $V = 0$ is a degenerate minimizer of $\langle \Lambda''(\Phi) V, V \rangle_{L^2(\Gamma)}$
in $H^1_{\Gamma} \cap L^2_c$.
\end{theorem}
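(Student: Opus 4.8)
The plan is to evaluate the Hessian quadratic form explicitly, split it according to the real and imaginary parts of the perturbation, and then exploit the symmetry of the star graph together with the classical spectral theory of the scalar linearized operators on the line. Writing $V = U + iW$ with real-valued $U, W \in H^1_\Gamma$ and expanding $\Lambda(\Phi + \epsilon V)$ to second order in $\epsilon$, the reality of $\Phi$ kills the cross terms between $U$ and $W$, so that
\[
\langle \Lambda''(\Phi) V, V \rangle_{L^2(\Gamma)} = \langle L_+ U, U \rangle_{L^2(\Gamma)} + \langle L_- W, W \rangle_{L^2(\Gamma)},
\]
where the operators act on each edge as $L_+ v = -v'' + v - (p+1)(2p+1)\phi^{2p} v$ and $L_- v = -v'' + v - (p+1)\phi^{2p} v$, with the forms posed on $H^1_\Gamma$.

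For each of $U$ and $W$ I would decompose the vector-valued profile into a symmetric part, with all $N$ components equal to the edge-average (denoted $\bar u$ and $\bar w$), and a balanced part whose components sum to zero identically. The continuity condition in $H^1_\Gamma$ forces the balanced part to vanish at the vertex, so it obeys a Dirichlet condition there, while the symmetric part inherits the Neumann condition coming from the Kirchhoff natural boundary condition. Because the balanced components sum to zero, the cross term between the two parts cancels, and each form block-diagonalizes into a scalar symmetric (Neumann) sector and an $(N-1)$-dimensional balanced (Dirichlet) sector. Since $\phi$ is the restriction to $\mathbb{R}^+$ of the standard half-line soliton, the even/odd splitting on $\mathbb{R}$ transfers the well-known spectra to the half-line: on the Neumann sector $L_+$ has exactly one negative eigenvalue and trivial kernel, while $L_-$ is nonnegative with one-dimensional kernel spanned by $\phi$; on the Dirichlet sector $L_+$ is nonnegative with kernel spanned by $\phi'$, while $L_-$ is strictly positive.

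The constraint $V \in L^2_c$ becomes $\langle \bar u, \phi \rangle_{L^2(\mathbb{R}^+)} = 0$ and $\langle \bar w, \phi \rangle_{L^2(\mathbb{R}^+)} = 0$ on the symmetric parts alone, since $\phi$ is orthogonal to every balanced profile. Three of the four sectors are then immediate: the balanced $L_-$ sector is strictly positive; the symmetric $L_-$ sector is nonnegative with kernel $\phi$, which the constraint removes, so it is strictly positive off the origin; and the balanced $L_+$ sector is nonnegative with an explicit $(N-1)$-dimensional kernel of profiles $\phi'(x)\,\vec{c}$ with $\sum_{j=1}^N c_j = 0$, which I would orthogonalize to obtain $U^{(1)}, \dots, U^{(N-1)}$. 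These indeed lie in $H^1_\Gamma \cap L^2_c$ because $\phi'(0) = 0$ and $\sum_j c_j = 0$.

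The only sector that can fail positivity is the symmetric $L_+$ sector, where $L_+$ carries one negative direction, and this is the hard part. Here I would apply the constrained-positivity (Vakhitov--Kolokolov) criterion: on $\{\phi\}^\perp$ the scalar form $\langle L_+ u, u \rangle$ is nonnegative precisely when $\langle L_+^{-1}\phi, \phi\rangle < 0$. Differentiating the scalar stationary equation in $\omega$ produces a function $R$ with $L_+ R = -\phi$ and $\langle R, \phi\rangle$ a positive multiple of $\partial_\omega Q(\Phi_\omega)|_{\omega=1}$, so that $\langle L_+^{-1}\phi,\phi\rangle = -\langle R,\phi\rangle$ carries the sign opposite to $\partial_\omega Q$; the scaling (\ref{scaling-transform}) gives $Q(\Phi_\omega) \propto \omega^{\frac{1}{p}-\frac{1}{2}}$, whose derivative at $\omega = 1$ is strictly positive exactly in the subcritical range $p \in (0,2)$, forcing $\langle L_+^{-1}\phi,\phi\rangle < 0$ and hence strict positivity of the constrained symmetric $L_+$ sector. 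Collecting the four sectors yields $\langle \Lambda''(\Phi) V, V\rangle_{L^2(\Gamma)} \geq 0$, with equality iff $W = 0$ and $U$ lies in the balanced $L_+$ kernel, that is, in the $(N-1)$-dimensional span of $U^{(1)}, \dots, U^{(N-1)}$, whence $V = 0$ is a degenerate minimizer. I expect the main obstacle to be precisely this last sector: pinning down that $L_+$ on the Neumann sector has exactly one negative eigenvalue and no kernel, and then invoking the abstract criterion with the correct sign of the slope, the subcriticality $p < 2$ entering decisively at this single point.
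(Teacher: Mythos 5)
Your proof is correct, and it reaches the theorem by a genuinely different route from the paper's. The paper works with the graph operators $L_{\pm}$ directly: for $L_-$ it uses the factorization $\langle L_- W, W\rangle_{L^2(\Gamma)} = \sum_j \int_0^\infty \phi^2 \bigl| \frac{d}{dx}(w_j/\phi)\bigr|^2 dx$ (Lemma \ref{prop-coercivity-L-minus}), and for $L_+$ it builds the decaying solution of the half-line ODE (Lemma \ref{solhyp}), invokes Sturm theory on the line (Lemma \ref{lem-Sturm}), and evaluates the vertex determinant $\Delta = N [u(0)]^{N-1} u'(0)$ (Lemma \ref{lem-eig}) to conclude that $L_+$ on the graph has exactly one simple negative eigenvalue and an $(N-1)$-fold zero eigenvalue (Corollary \ref{cor-eig}); constrained positivity then follows from the GSS criterion applied to the full graph operator, with the same mass-scaling computation you perform. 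Your symmetric/balanced splitting $u_j = \bar u + v_j$ with $\sum_j v_j \equiv 0$ (so that continuity forces $v_j(0)=0$) block-diagonalizes each quadratic form into one scalar Neumann sector and a balanced Dirichlet sector, so all spectral input reduces, via even/odd extension, to the classical scalar operators on the line, and the Vakhitov--Kolokolov step is needed only in the scalar Neumann sector where $n(L_+)=1$ and the kernel is trivial. This is more elementary and self-contained (no Volterra/Wronskian analysis, no determinant) and it isolates exactly where $p<2$ enters. What the paper's route buys in exchange is the complete point spectrum of $L_+$ on the graph, independent of the permutation symmetry you exploit, together with the coercivity bounds (\ref{coercivity}) and (\ref{coercivity-L-plus}) away from the kernel; the theorem itself does not need coercivity, but Sections 4 and 5 reuse it heavily, so to feed your argument into Theorems \ref{theorem-saddle} and \ref{theorem-instability} you would add sector-by-sector coercivity (which does follow, since the essential spectrum starts at $1$). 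One small wording slip: the criterion gives nonnegativity on $\{\phi\}^{\perp}$ if and only if $\langle L_+^{-1}\phi,\phi\rangle \leq 0$, with strict inequality yielding positivity off the kernel; since you verify the strict inequality, your conclusion stands as written.
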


\begin{remark}
If $p = 2$, then $\langle \Lambda''(\Phi) V, V \rangle_{L^2(\Gamma)} = 0$ if and only if
$V \in H^1_{\Gamma} \cap L^2_c$ belongs to a $N$-dimensional subspace of $L^2_c$ with an additional
degeneracy. For $p > 2$, the second variation is not positive in $H^1_{\Gamma} \cap L^2_c$.
\end{remark}

\begin{theorem}
\label{theorem-saddle}
Let $X_c = {\rm span}\{U^{(1)}, U^{(2)}, \dots, U^{(N-1)}\} \subset L^2_c$
be defined in Theorem \ref{theorem-positivity}. For every $p \in \left[\frac{1}{2},2\right)$, 
there exists $\delta > 0$ such that for every
$c = (c_1,c_2,\dots,c_{N-1})^T \in \mathbb{R}^{N-1}$ satisfying
$\| c \| \leq \delta$, there exists a unique minimizer of the variational problem
\begin{equation}
\label{constrained-min}
M(c) := \inf_{U^{\perp} \in H^1_{\Gamma} \cap L^2_c \cap [X_c]^{\perp}} 
\left[ \Lambda(\Phi + c_1 U^{(1)} + c_2 U^{(2)} + \dots + c_{N-1} U^{(N-1)} + U^{\perp}) - \Lambda(\Phi) \right]
\end{equation}
such that $\| U^{\perp} \|_{H^1(\Gamma)} \leq A \| c \|^2$ for a $c$-independent constant $A > 0$.
Moreover, $M(c)$ is sign-indefinite in $c$.
Consequently, $\Phi$ is a nonlinear saddle point of $\Lambda$ in $H^1_{\Gamma}$
with respect to perturbations in $H^1_{\Gamma} \cap L^2_c$.
\end{theorem}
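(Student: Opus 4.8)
The plan is to treat the $(N-1)$ degenerate directions $X_c$ as free parameters and to reduce the minimization in \eqref{constrained-min} to a finite-dimensional problem by a Lyapunov--Schmidt argument, after which the saddle structure is read off from the leading cubic term of the reduced functional $M(c)$. The two ingredients are a coercivity estimate that makes the minimization over $U^\perp$ well posed, and an explicit computation of the cubic form of $\Lambda$ restricted to $X_c$.

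First I would strengthen the nonnegativity of Theorem \ref{theorem-positivity} to a coercive bound: there is $\alpha > 0$ with
\[ \langle \Lambda''(\Phi) U^\perp, U^\perp\rangle_{L^2(\Gamma)} \ge \alpha \|U^\perp\|_{H^1(\Gamma)}^2 \quad \text{for all } U^\perp \in H^1_\Gamma \cap L^2_c \cap [X_c]^\perp. \]
Since $\Phi$ decays exponentially, $\Lambda''(\Phi)$ is a relatively compact perturbation of $-\Delta + 1$, so its essential spectrum is $[1,\infty)$ and only finitely many eigenvalues lie below it; the mass constraint $L^2_c$ excludes the negative eigenspace (and the neutral phase mode $i\Phi$, which is flat to all orders by the invariance $\Lambda(e^{i\theta}\Psi) = \Lambda(\Psi)$ and hence does not affect $M(c)$), while $[X_c]^\perp$ excludes the null space of Theorem \ref{theorem-positivity}. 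Combining positivity on this subspace with a G\aa rding-type inequality controlling $\|U^\perp\|_{H^1(\Gamma)}^2$ by $\langle\Lambda''(\Phi)U^\perp,U^\perp\rangle_{L^2(\Gamma)} + \|U^\perp\|_{L^2(\Gamma)}^2$ yields the coercivity. Proving this coercive estimate --- equivalently, that zero is isolated in the spectrum of $\Lambda''(\Phi)$ constrained to $L^2_c$ with null space exactly $X_c$ --- is the main obstacle of the proof.

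Given coercivity, the plan is to perform the reduction. Setting $\Phi_c := c_1 U^{(1)} + \cdots + c_{N-1} U^{(N-1)} \in X_c$ and projecting the Euler--Lagrange equation $\Lambda'(\Phi + \Phi_c + U^\perp) = 0$ onto $H^1_\Gamma \cap L^2_c \cap [X_c]^\perp$, the derivative with respect to $U^\perp$ at the origin is $\Lambda''(\Phi)$ restricted to that subspace, which is invertible by the coercive bound; the implicit function theorem then yields a unique small solution $U^\perp = U^\perp(c)$, depending $C^1$ on $c$. Because $\Phi_c \in \ker\Lambda''(\Phi)$, the inhomogeneity driving $U^\perp$ is the projection of the quadratic-in-$\Phi_c$ part of the nonlinear gradient, of size $O(\|c\|^2)$, whence $\|U^\perp\|_{H^1(\Gamma)} \le A\|c\|^2$. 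The hypothesis $p \ge \tfrac12$ enters precisely here and in the expansion below: it makes $\Psi \mapsto \|\Psi\|_{L^{2p+2}(\Gamma)}^{2p+2}$ of class $C^3$ near $\Phi$ (the third derivative of $t \mapsto |t|^{2p+2}$ is proportional to $|t|^{2p-1}$, continuous exactly when $2p-1 \ge 0$), so that $\Lambda$ admits a cubic Taylor expansion with an $o(\|V\|^3)$ remainder.

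Finally I would compute the reduced functional. Because $\Phi_c \in \ker\Lambda''(\Phi)$ the quadratic term vanishes, the cross terms involving $U^\perp = O(\|c\|^2)$ are $O(\|c\|^4)$, and replacing the minimizer $U^\perp(c)$ by $0$ changes the value by at most $O(\|c\|^4)$; hence
\[ M(c) = \tfrac{1}{6}\,\Lambda'''(\Phi)[\Phi_c,\Phi_c,\Phi_c] + O(\|c\|^4), \]
where only the nonlinearity contributes to $\Lambda'''$. Recalling that the null modes of Theorem \ref{theorem-positivity} are spanned by the edgewise soliton derivative $\phi'$ (which satisfies $\phi'(0) = 0$, so $\phi'$ on a single edge lies in $H^1_\Gamma$), so that $U^{(k)}$ has $j$-th edge component $a^{(k)}_j\phi'$ with $\sum_{j=1}^N a^{(k)}_j = 0$, and setting $b_j := \sum_{k=1}^{N-1} c_k a^{(k)}_j$ (whence $\sum_{j=1}^N b_j = 0$), the cubic form decouples across edges into
\[ M(c) = \kappa \sum_{j=1}^N b_j^3 + O(\|c\|^4), \qquad \kappa = -\tfrac{(2p)(2p+1)(2p+2)}{6}\int_0^\infty \phi^{2p-1}(\phi')^3\,dx \neq 0, \]
the integral being nonzero (indeed negative) since $\phi > 0$ and $\phi' < 0$ on $(0,\infty)$. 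The linear map $c \mapsto (b_1,\dots,b_N)$ is a bijection onto the hyperplane $\{\sum_j b_j = 0\}$, on which $\sum_j b_j^3$ is sign-indefinite as soon as $N \ge 3$ (it is positive at $b = (2,-1,-1,0,\dots,0)$ and negative at the opposite point, whereas for $N = 2$ it vanishes identically). Therefore $M(c)$ is sign-indefinite for all sufficiently small $\|c\|$, which is exactly the statement that $\Phi$ is a nonlinear saddle point of $\Lambda$ in $H^1_\Gamma$ with respect to perturbations in $H^1_\Gamma \cap L^2_c$.
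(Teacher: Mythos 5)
Your proposal is correct and takes essentially the same route as the paper: the coercivity you single out as ``the main obstacle'' is precisely the paper's Lemma \ref{prop-coercivity-L-plus} (obtained from Weyl's theorem, the Grillakis--Shatah--Strauss slope condition $\frac{d}{d\omega}\|\Phi_\omega\|_{L^2(\Gamma)}^2>0$ for $p\in(0,2)$, and G{\aa}rding's inequality), and your implicit-function-theorem reduction with $\|U^{\perp}(c)\|_{H^1(\Gamma)}\leq A\|c\|^2$ is the paper's Lemma \ref{prop-minimization}. The only difference is bookkeeping in the last step: you pass to edge variables $b_j$ so that the leading term is $\kappa\sum_{j=1}^N b_j^3$ on the hyperplane $\sum_{j=1}^N b_j=0$, whereas the paper evaluates the diagonal coefficients $\langle \Phi^{2p-1}(U^{(j)})^2,U^{(j)}\rangle_{L^2(\Gamma)}=\frac{pj(j^2-1)}{2(p+1)(2p+1)}\neq 0$ in its explicit orthogonal basis --- both rest on $\int_0^\infty \phi^{2p-1}(\phi')^3\,dx\neq 0$, and your version is arguably cleaner (one minor slip: for $p\in\left[\frac12,1\right)$ the remainder is only ${\rm o}(\|c\|^3)$ rather than ${\rm O}(\|c\|^4)$, which still suffices for sign-indefiniteness).
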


\begin{remark}
The restriction $p \geq \frac{1}{2}$ is used in order to expand $\Lambda(\Phi + U)$ up to the cubic terms
with respect to the perturbation $U \in H^1_{\Gamma} \cap L^2_c$ and then to pass to normal forms.
If $p = 2$, $\Phi$ is still a nonlinear saddle point of $\Lambda$ in $H^1_{\Gamma} \cap L^2_c$
but the proof needs to be modified by the fact that $X_c$ is $N$-dimensional.
If $p > 2$,  it follows already from the second derivative test that
$\Phi$ is a saddle point of $\Lambda$ in $H^1_{\Gamma} \cap L^2_c$.
\end{remark}

\begin{theorem}
\label{theorem-instability}
For every $p \in \left[\frac{1}{2},2\right)$, there exists $\epsilon > 0$
such that for every $\delta > 0$ (sufficiently small)
there exists $V \in H^1_{\Gamma}$ with $\| V \|_{H^1_{\Gamma}} \leq \delta$
such that the unique global solution $\Psi(t) \in C(\mathbb{R},H^1_{\Gamma}) \cap C^1(\mathbb{R},H^{-1}_{\Gamma})$
to the NLS equation (\ref{eq1}) starting with
the initial datum $\Psi(0) = \Phi + V$ satisfies
\begin{equation}
\label{orbital-instab}
\inf_{\theta \in \mathbb{R}} \| e^{-i \theta} \Psi(t_0) - \Phi \|_{H^1(\Gamma)} > \epsilon \quad \mbox{\rm for some \;} t_0 > 0.
\end{equation}
Consequently, the orbit $\{ \Phi e^{i \theta}\}_{\theta \in \mathbb{R}}$
is unstable in the time evolution of the NLS equation (\ref{eq1}) in $H^1_{\Gamma}$.
\end{theorem}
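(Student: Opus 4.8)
The plan is to reduce the infinite-dimensional flow near the orbit $\{e^{i\theta}\Phi\}$ to a finite-dimensional Hamiltonian system on the degenerate subspace $X_c$, and then to produce escaping trajectories by exploiting the \emph{sign-indefinite} cubic form underlying $M(c)$ in Theorem \ref{theorem-saddle}. First I would factor out the two symmetries by writing the solution as $\Psi(t) = e^{i(t+\theta(t))}(\Phi + U(t))$, where the phase $e^{it}$ comes from $\omega = 1$ and the modulation parameter $\theta(t)$ is fixed by imposing a symplectic orthogonality of $U(t)$ to the phase direction $i\Phi$; conservation of the mass $Q$ then lets one arrange, up to a correction quadratic in $\|U\|$, that $U(t)$ stays in the constraint space $L^2_c$ of \eqref{constraint}, and conservation of $\Lambda = E + Q$ supplies the scalar invariant that drives all estimates. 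Writing $U = u + iv$ with real $u,v$, the Hessian splits as $\tfrac12\langle \Lambda''(\Phi)U,U\rangle = \langle L_+ u,u\rangle + \langle L_- v,v\rangle$, with $L_+ = -\Delta + 1 - (p+1)(2p+1)\phi^{2p}$ and $L_- = -\Delta + 1 - (p+1)\phi^{2p}$ acting componentwise under the Kirchhoff conditions. The degenerate subspace of Theorem \ref{theorem-positivity} is then $X_c = \ker L_+$, spanned by the real modes $U^{(k)}$ (built from $\phi'$ with vanishing sum of the vertex coefficients, which is admissible because $\phi'(0)=0$), whereas $\ker L_- = \mathrm{span}\{\Phi\} \not\subset L^2_c$ is exactly the phase direction already removed by modulation and the mass constraint.

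Next I would set up the normal form. Decompose $u = \sum_k a_k(t)\,\eta_k + u_\perp$ and $v = \sum_k b_k(t)\,\zeta_k + v_\perp$, where $\eta_k$ spans $X_c$, the symplectically conjugate partners $\zeta_k := L_-^{-1}\eta_k$ are well defined since $\eta_k \perp \Phi$, and $(u_\perp,v_\perp)$ lie in the coercive subspace on which, by Theorem \ref{theorem-positivity} and the positive essential spectrum of $L_\pm$, one has $\langle L_+ u_\perp,u_\perp\rangle + \langle L_- v_\perp,v_\perp\rangle \geq \kappa(\|u_\perp\|_{H^1}^2 + \|v_\perp\|_{H^1}^2)$. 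The linearized equations read $\dot u = L_- v$ and $\dot v = -L_+ u$; projecting the full flow \eqref{eq1} onto the critical modes gives $\dot a = B b + (\text{error})$ and $\dot b = -\nabla M_3(a) + (\text{error})$, hence the Newtonian reduced system $\ddot a = -B\,\nabla M_3(a) + (\text{error})$, where $B>0$ is the positive-definite mass matrix fixed by $(\langle \eta_j, L_-^{-1}\eta_k\rangle)_{j,k}$ and $M_3$ is the homogeneous cubic that is the leading part of $M(c)$ in Theorem \ref{theorem-saddle} (the slaved correction $U^\perp$ affects only quartic and higher order). Crucially, conservation of $\Lambda$ together with the coercivity bound yields the a priori control $\|u_\perp\|_{H^1} + \|v_\perp\|_{H^1} \lesssim \|a\|^2 + \|b\| + \sqrt{|\Lambda(\Phi+U)-\Lambda(\Phi)|}$, so the transverse part is genuinely higher order and the reduction is self-consistent.

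Finally I would run the escape argument. Since $M(c)$ is sign-indefinite, $M_3$ admits a direction $\hat a$ with $M_3(\hat a) < 0$, a strictly downhill direction of the cubic potential. I choose $V$ with $\|V\|_{H^1_\Gamma} \leq \delta$ so that $a(0) = \delta'\hat a$ points downhill and $b(0)$ is small; because $-\nabla M_3(\hat a)\cdot\hat a = -3M_3(\hat a) > 0$, the reduced particle accelerates downhill, and a comparison argument for the scalar quantity $A(t) := \hat a \cdot a(t)$ of the schematic form $\ddot A \geq cA^2 - (\text{error})$ forces $A$ to grow monotonically and reach size $\sim\epsilon$ at some finite but large time $t_0 \sim (\delta')^{-1/2}$, whence $\|U(t_0)\|_{H^1} \geq \epsilon$ and \eqref{orbital-instab} follows after undoing the modulation. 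The main obstacle is closing the bootstrap over this long escape time: the instability is only algebraic (``slow''), so the coupling of $(a,b)$ to $(u_\perp,v_\perp)$, the modulation term $\dot\theta$, and the quartic and higher remainders must each be shown to remain strictly subdominant to the leading Newtonian dynamics. This is exactly where the sharp coercivity of Theorem \ref{theorem-positivity}, the conservation of $\Lambda$, and the identification of the reduced potential with the sign-indefinite $M_3$ of Theorem \ref{theorem-saddle} must be combined; a purely linear (Jordan-block) argument does not suffice, since over the escape time the accumulated nonlinear correction is not small compared with the drift it is meant to perturb, which is why the downhill direction furnished by the saddle geometry is indispensable.
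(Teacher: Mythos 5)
Your strategy coincides structurally with the paper's proof: factor out the symmetry by modulation, decompose the remainder over $X_c=\ker L_+$ and the symplectically conjugate modes $W^{(j)}=L_-^{-1}U^{(j)}$, derive a reduced Newtonian system whose potential is the cubic part $M_0$ of Theorem \ref{theorem-saddle}, control the transverse remainder by conservation of $E$ and $Q$ together with coercivity of $L_\pm$ on the constrained subspaces, and close a bootstrap over the time scale $t_0\sim\epsilon^{-1/2}$. One incidental difference: you modulate only the phase $\theta$ and use mass conservation to keep the real part approximately in $L^2_c$, whereas the paper modulates both $(\theta,\omega)$ (Lemma \ref{lem-orthogonal}) so that the two orthogonality conditions hold exactly; your variant is workable, but the $\mathcal{O}(\|U\|^2)$ leakage along $\Phi$ then enters the reduced equations as an extra error term that must be tracked through the long-time Gronwall step.

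The genuine gap is in your escape argument for the reduced system $\ddot a = -B\nabla M_3(a)+\mathrm{error}$. From $M_3(\hat a)<0$ you infer that $A(t):=\hat a\cdot a(t)$ obeys $\ddot A \geq c A^2 - \mathrm{error}$, but this inequality does not follow: $\ddot A = -\hat a\cdot B\nabla M_3(a)$ depends on the full vector $a(t)$, not on $A$ alone, and once the trajectory leaves the ray $\mathbb{R}_+\hat a$ the off-ray components of $\nabla M_3$ have no sign, so a particle launched downhill in a multidimensional cubic potential may veer sideways; nothing in your argument confines it to a cone around $\hat a$. (Note also that sign-indefiniteness of the odd homogeneous function $M_3$ is automatic; the substantive input is $M_3\not\equiv 0$, which is exactly the nonzero-coefficient computation \eqref{energy-cube-nonzero}.) The paper closes precisely this point in Lemma \ref{lem-instability-Ham} by exhibiting an \emph{exact invariant subspace}: because the generating vectors $e_j$ of the basis $U^{(j)}$ are mutually orthogonal, the set $\{\gamma_1=\dots=\gamma_{N-2}=0\}$ is invariant for the truncated system \eqref{normal-form-time}, which therefore reduces to the scalar equation \eqref{scalar-ODE}, $\ddot\gamma_{N-1}=C\gamma_{N-1}^2$ with $C\neq 0$, where instability of the origin is elementary. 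Your argument can be repaired in the same spirit without the explicit computation: choose $\hat a$ to be a minimizer of $M_3$ on the ellipsoid $\{a:\ a\cdot B^{-1}a=1\}$; the Lagrange condition gives $B\nabla M_3(\hat a)=3M_3(\hat a)\,\hat a$, so the ray through $\hat a$ is invariant for the truncated flow and the dynamics on it is again the scalar blow-up equation $\ddot s = |3M_3(\hat a)|\,s^2$. With that fixed, your comparison-plus-bootstrap scheme, run with the paper's scalings $\|c\|\lesssim\epsilon$, $\|b\|\lesssim\epsilon^{3/2}$, $\delta\sim\epsilon^{3/2}$, $t_0\sim\epsilon^{-1/2}$, goes through as in Steps 4--5 of the paper.
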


\begin{remark}
If $p = 2$, the instability claim of Theorem \ref{theorem-instability} follows
from analysis of the NLS equation on the real line \cite{Comech,Ohta}.
If $p > 2$, the instability claim of Theorem \ref{theorem-instability} follows
from the standard approach \cite{GSS}.
\end{remark}

Theorems \ref{theorem-positivity}, \ref{theorem-saddle}, \ref{theorem-instability} are proven in Sections
3,4, and 5, respectively.

\section{Proof of Theorem \ref{theorem-positivity}}

We set $\omega = 1$ by the scaling transformation (\ref{scaling-transform}) and consider the half-soliton state $\Phi$
given by (\ref{half-soliton}). Substituting $\Psi = \Phi + U + iW$ with real-valued $U,W \in H^1_{\Gamma}$ into
the action functional $\Lambda(\Psi) = E(\Psi) + Q(\Psi)$ and expanding in $U,W$ yield
\begin{equation}
\label{second-variation-Lambda}
\Lambda(\Phi + U + i W) = \Lambda(\Phi) + \langle L_+ U, U \rangle_{L^2(\Gamma)} +
\langle L_- W, W \rangle_{L^2(\Gamma)} + N(U,W),
\end{equation}
where
\begin{eqnarray*}
\langle L_+ U, U \rangle_{L^2(\Gamma)} & := & \int_{\Gamma} \left[ (\nabla U)^2 + U^2 - (2p+1) (p+1) \Phi^{2p} U^2 \right] dx, \\
\langle L_- W, W \rangle_{L^2(\Gamma)} & := & \int_{\Gamma} \left[ (\nabla W)^2 + W^2 - (p+1) \Phi^{2p} W^2 \right] dx,
\end{eqnarray*}
and
\begin{eqnarray*}
N(U,W) = \left\{ \begin{array}{l} {\rm o}(\| U + i W \|_{H^1(\Gamma)}^2), \quad p \in \left(0,\frac{1}{2}\right), \\
{\rm O}(\| U + i W \|_{H^1(\Gamma)}^3), \quad p \geq \frac{1}{2}. \end{array} \right.
\end{eqnarray*}
In the strong formulation, we can also define the two Hessian operators
\begin{eqnarray}
\label{Lplus}
L_+ & = & -\Delta + 1 - (2p+1) (p+1) \Phi^{2p} : H^2_{\Gamma} \subset L^2(\Gamma) \to L^2(\Gamma), \\
L_- & = & -\Delta + 1 - (p+1) \Phi^{2p} : \phantom{texttext} H^2_{\Gamma} \subset L^2(\Gamma) \to L^2(\Gamma),
\label{Lminus}
\end{eqnarray}
where $\Phi^{2p} = (\phi_1^{2p}, \phi_2^{2p}, \dots, \phi_N^{2p})^T$. Both operators are extended as the self-adjoint
operators in $L^2(\Gamma)$. The spectrum $\sigma(L_{\pm}) \subset \mathbb{R}$ consists of the continuous and discrete parts
denoted by $\sigma_c(L_{\pm})$ and $\sigma_p(L_{\pm})$ respectively.

By Weyl's Theorem, since $\Phi^{2p}$ is bounded and decays exponentially fast to zero
at infinity, we have $\sigma_c(L_{\pm}) = \sigma(- \Delta + 1) = [1,\infty)$.
Therefore, we are only concerned with the eigenvalues of $\sigma_p(L_{\pm})$ in $(-\infty,1)$.
The following result shows that $\sigma_p(L_-) \geq 0$, $0 \in \sigma_p(L_-)$ is simple,
and $L_-$ is coercive in the subspace $L^2_c$ associated with a single constraint in (\ref{constraint}).

\begin{lemma}
\label{prop-coercivity-L-minus}
There exists $C > 0$ such that
\begin{equation}
\label{coercivity}
\langle L_- W, W \rangle_{L^2(\Gamma)} \geq C \| W \|_{H^1(\Gamma)}^2 \quad \mbox{\rm for every \;} W \in H^1_{\Gamma} \cap L^2_c,
\end{equation}
where $L^2_c$ is given by (\ref{constraint}).
\end{lemma}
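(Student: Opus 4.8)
The plan is to exploit the edge-wise (diagonal) structure of $L_-$ on the star graph and to reduce the quadratic form to two decoupled scalar problems on the half-line $\mathbb{R}^+$, one with a Neumann and one with a Dirichlet condition at the vertex. First I would record that the scalar profile $\phi(x) = \sech^{1/p}(px)$ solves $-\phi'' + \phi - (p+1)\phi^{2p+1} = 0$ with $\phi > 0$ and $\phi'(0) = 0$, so that $\Phi = \phi(1,\dots,1)^T$ lies in $H^2_\Gamma$ (the Kirchhoff condition $\sum_j \phi'(0) = N\phi'(0) = 0$ holds) and satisfies $L_-\Phi = 0$. By Sturm--Liouville theory the scalar operator $\ell_- := -\partial_x^2 + 1 - (p+1)\phi^{2p}$ on the full line $\mathbb{R}$ has the nodeless even ground state $\phi$ at the bottom eigenvalue $0$, simple and isolated from the essential spectrum $[1,\infty)$; hence $\ell_- \geq 0$ on $L^2(\mathbb{R})$ with one-dimensional kernel $\mathrm{span}\{\phi\}$.

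Next, for $W = (w_1,\dots,w_N)^T$ I would write the form as $\langle L_- W, W\rangle_{L^2(\Gamma)} = \int_0^\infty [\, |W'|^2 + |W|^2 - (p+1)\phi^{2p}|W|^2 \,]\,dx$ with $|W|^2 = \sum_j w_j^2$, and decompose the fiber $\mathbb{R}^N$ by the orthogonal projections $P$ onto $\mathrm{span}\{(1,\dots,1)^T\}$ and $P^\perp = I - P$. The continuity condition defining $H^1_\Gamma$ is equivalent to $(P^\perp W)(0) = 0$, so the antisymmetric part $W^\perp := P^\perp W$ obeys a Dirichlet condition at the vertex, while the symmetric part $W^{\mathrm s} := PW = s(x)(1,\dots,1)^T$ carries a natural (Neumann) condition. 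Since the density is rotationally invariant in the fiber and $P,P^\perp$ are pointwise orthogonal, the form splits with no cross term as a Neumann contribution built from $\ell_-$ acting on $s$ and a Dirichlet contribution built from $\ell_-$ acting on $W^\perp$. Crucially, because $\Phi$ lies in the symmetric sector, the constraint $\langle W, \Phi\rangle_{L^2(\Gamma)} = 0$ reduces to $\langle s, \phi\rangle_{L^2(\mathbb{R}^+)} = 0$ and imposes no restriction on $W^\perp$.

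Then I would prove coercivity of each piece separately. For the Neumann piece, $\ell_-^N \geq 0$ has simple kernel $\phi$, so on $\{s : \langle s, \phi\rangle = 0\}$ one has $\langle \ell_-^N s, s\rangle \geq \gamma\|s\|_{H^1}^2$ for some $\gamma > 0$, using that $0$ is isolated and simple and upgrading the $L^2$-gap to an $H^1$-bound by a G\aa rding estimate since the potential is bounded. For the Dirichlet piece I would use an odd-reflection argument: extending $v$ with $v(0) = 0$ to an odd function $\tilde v$ on $\mathbb{R}$ gives $\langle \ell_-^D v, v\rangle_{L^2(\mathbb{R}^+)} = \tfrac12\langle \ell_- \tilde v, \tilde v\rangle_{L^2(\mathbb{R})}$, and since $\tilde v$ is odd it is orthogonal to the even ground state $\phi$, whence $\ell_- \geq \lambda_1 > 0$ on $\phi^\perp$ yields strict positivity and, after a second G\aa rding step, $H^1$-coercivity with some $\alpha > 0$. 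Adding the two bounds over $W^{\mathrm s}$ and $W^\perp$ gives $\langle L_- W, W\rangle_{L^2(\Gamma)} \geq C\|W\|_{H^1(\Gamma)}^2$ with $C = \min(\gamma,\alpha)$ for every $W \in H^1_\Gamma \cap L^2_c$, and en route the same decomposition shows $L_- \geq 0$ with $\ker L_- = \mathrm{span}\{\Phi\}$, confirming the simplicity of the zero eigenvalue asserted before the lemma.

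The main obstacle is the strict positivity of the Dirichlet half-line operator $\ell_-^D$, since it is exactly here that the sign structure of the problem is used and one must rule out a surviving zero mode. The odd-reflection identity together with the simplicity of the even ground state $\phi$ settles this: any would-be zero mode of $\ell_-^D$ reflects to a zero mode of $\ell_-$ on $\mathbb{R}$, forcing it to be proportional to $\phi$, which is even and nonvanishing at $0$ and hence incompatible with $v(0) = 0$. The remaining work, namely the passage from $L^2$-gaps to $H^1$-coercivity and the bookkeeping of constants in the fiber decomposition, is routine.
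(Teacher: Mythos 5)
Your proof is correct, but it follows a genuinely different route from the paper's. The paper proves nonnegativity and identifies the kernel in a single stroke via a ground-state factorization on the graph: using $\phi'' = \phi - (p+1)\phi^{2p+1}$ and $(\phi')^2 = \phi^2 - \phi^{2p+2}$ together with integration by parts, it rewrites the quadratic form as
$\langle L_- W, W \rangle_{L^2(\Gamma)} = \sum_{j=1}^{N} \int_0^{\infty} \phi^2 \left| \frac{d}{dx}\left( \frac{w_j}{\phi} \right) \right|^2 dx \geq 0$,
so that vanishing of the form forces $w_j = c_j \phi$ on each edge, and the continuity condition at the vertex collapses the constants to $c_1 = \dots = c_N$, giving ${\rm ker}(L_-) = {\rm span}\{\Phi\}$; coercivity on $L^2_c$ then follows, as in your argument, from the spectral theorem and the G{\aa}rding inequality. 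You instead decompose the fiber $\mathbb{R}^N$ into the symmetric direction ${\rm span}\{(1,\dots,1)^T\}$ and its orthogonal complement, observe that the continuity condition is exactly the Dirichlet condition on the antisymmetric sector, and reduce to two scalar half-line problems which you handle by even/odd reflection and Sturm--Liouville ground-state theory for $\ell_-$ on $\mathbb{R}$. Your decomposition buys a structural explanation that the paper's computation leaves implicit: the mass constraint $\langle W, \Phi \rangle_{L^2(\Gamma)} = 0$ acts only on the symmetric sector, while the antisymmetric sector is unconditionally coercive (the odd-reflection argument ruling out a Dirichlet zero mode is exactly right, since $\phi$ is even and $\phi(0) \neq 0$); this sector splitting also parallels the boundary-condition analysis the paper uses later for $L_+$. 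The paper's factorization, on the other hand, is shorter and avoids reflection and the parity bookkeeping altogether -- it is the quadratic-form incarnation of the same positivity-of-the-ground-state fact you invoke from Sturm--Liouville theory. Both proofs converge on the identical final step (isolated simple zero eigenvalue plus G{\aa}rding), so the two arguments are equally rigorous; yours is more modular, the paper's more economical.
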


\begin{proof}
By using (\ref{half-soliton}), we write for every $W = (w_1, w_2, \dots, w_N)^T \in H^1_{\Gamma}$,
\begin{equation} \label{operLminus}
\langle L_- W, W \rangle_{L^2(\Gamma)} = \sum_{j=1}^{N} \int_0^{+\infty} \Big[ \left(\frac{d w_j}{dx} \right)^2
+ w_j^2 - (p+1) \phi^{2p} w_j^2 \Big] dx.
\end{equation}
By using $\phi'' = \phi - (p+1) \phi^{2p+1}$, $(\phi')^2 = \phi^2 - \phi^{2p+2}$, and
integration by parts, we obtain
$$
\int_0^{+\infty} p w_j^2 \phi^{2p} dx = \int_0^{+\infty} 2 w_j \frac{d w_j}{dx} \frac{\phi'}{\phi} dx
$$
and
$$
\int_0^{+\infty} \left( w_j^2 - \phi^{2p} w_j^2 \right) dx = \int_0^{+\infty} w_j^2 \left( \frac{\phi'}{\phi} \right)^2 dx,
$$
so that the representation (\ref{operLminus}) is formally equivalent to
\begin{equation}
\label{representation-L-minus}
\langle L_- W, W \rangle_{L^2(\Gamma)} = \sum_{j=1}^{N} \int_0^{+\infty} \phi^2 \Big| \frac{d}{dx} \Big( \frac{w_j}{\phi} \Big) \Big|^2 dx \geq 0.
\end{equation}
Since $\phi(x) > 0$ for every $x \in \mathbb{R}^+$ and $\partial_x \log\phi \in L^{\infty}(\mathbb{R})$,
the representation (\ref{representation-L-minus}) is justified for every $W \in H^1_{\Gamma}$.
It follows from (\ref{representation-L-minus}) that $\langle L_- W, W \rangle_{L^2(\Gamma)} = 0$
if and only if $W \in H^1_{\Gamma}$ satisfies
\begin{equation}
\label{representation-L-zero}
 \frac{d}{dx} \Big( \frac{w_j}{\phi} \Big) = 0 \quad \mbox{\rm almost everywhere and for every \;} j.
\end{equation}
Sobolev's embedding of $H^1(\mathbb{R}^+)$ into $C(\mathbb{R}^+)$ and
equation (\ref{representation-L-zero}) imply that $w_j = c_j \phi$ for some constant $c_j$.
The continuity boundary conditions in the definition of $H^1_{\Gamma}$ in (\ref{H1}) then
yield $c_1 = c_2 = \dots = c_N$, which means that $0$ is a simple eigenvalue of the operator $L_-$ in (\ref{Lminus})
with the eigenvector $\Phi$. Since eigenvalues of $\sigma_p(L_-) \in (-\infty,1)$ are isolated,
the coercivity bound (\ref{coercivity}) follows by the
spectral theorem and G{\aa}rding inequality.
\end{proof}

In order to study $\sigma_p(L_+)$ in $(-\infty,1)$, we recall the construction of exponentially decaying solutions
of the second-order differential equation
\begin{equation} \label{eqofcomp}
- u''(x) + u(x) - (2p+1)(p+1) \sech^2(px) u(x) = \lambda u(x), \quad x \in (0,\infty), \quad \lambda < 1.
\end{equation}
The following two lemmas recall some well-known results on the Schr\"{o}dinger equation (\ref{eqofcomp}).

\begin{lemma}\label{solhyp}
For every $\lambda<1$, there exists a unique solution $u \in C^1(\mathbb{R}^+)$ to equation (\ref{eqofcomp})
such that
\begin{equation}
\label{u-limit}
\lim_{x \to +\infty} u(x) e^{\sqrt{1-\lambda} x} = 1.
\end{equation}
The other linearly independent solution to equation (\ref{eqofcomp}) diverges as $x \to +\infty$.
\end{lemma}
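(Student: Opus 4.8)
The plan is to treat (\ref{eqofcomp}) as a perturbation of the constant-coefficient equation $u'' = (1-\lambda)u$ and to capture the asymptotics by recasting the ODE as a Volterra integral equation on a half-line $[R,\infty)$. Set $\mu := \sqrt{1-\lambda} > 0$ and $V(x) := (2p+1)(p+1)\sech^2(px)$, so that (\ref{eqofcomp}) reads $u'' - \mu^2 u = -V u$. The potential $V$ is smooth, bounded, and decays like $e^{-2px}$, hence $V \in L^1(\mathbb{R}^+)$; this integrability is what drives the whole argument.

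First I would construct the decaying solution. Using the fundamental system $\{e^{\mu x}, e^{-\mu x}\}$ of the unperturbed operator and variation of parameters, I would seek a fixed point of
$$(Tu)(x) = e^{-\mu x} - \frac{1}{\mu}\int_x^{\infty} \sinh\bigl(\mu(y-x)\bigr)\, V(y)\, u(y)\,dy$$
in the weighted Banach space $X_R := \{ u \in C([R,\infty)) : \|u\|_* := \sup_{x \geq R} e^{\mu x}|u(x)| < \infty \}$. For $y \geq x$ one has $|\sinh(\mu(y-x))| \leq \tfrac12 e^{\mu(y-x)}$, so for $u \in X_R$ the integral term is bounded in absolute value by $\tfrac{1}{2\mu} e^{-\mu x}\|u\|_* \int_x^{\infty} V(y)\,dy$. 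Since $\int_R^{\infty} V\,dy \to 0$ as $R \to \infty$, choosing $R$ large enough makes $\tfrac{1}{2\mu}\int_R^{\infty} V\,dy < \tfrac12$, so $T$ maps $X_R$ into itself and is a contraction. Its unique fixed point $u$ satisfies $e^{\mu x} u(x) = 1 - \tfrac{1}{\mu}\int_x^{\infty} \sinh(\mu(y-x))\, e^{\mu x} V(y) u(y)\,dy$, and the same bound shows the integral tends to $0$ as $x \to \infty$, giving exactly (\ref{u-limit}). Standard ODE regularity and continuous dependence then extend $u$ uniquely and smoothly from $[R,\infty)$ back to all of $(0,\infty)$, with $u \in C^1(\mathbb{R}^+)$.

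The uniqueness of $u$ subject to the normalization (\ref{u-limit}) follows because the difference of two such solutions is $o(e^{-\mu x})$ and satisfies the homogeneous integral equation $v = T_0 v$, where $T_0$ is $T$ with the leading term $e^{-\mu x}$ removed; since $T_0$ is a contraction on $X_R$, its only fixed point is $v = 0$. For the second statement I would use reduction of order. As (\ref{eqofcomp}) has no first-order term, the Wronskian of any two solutions is constant; writing a linearly independent solution as $\tilde u(x) = u(x)\int_{x_0}^{x} u(s)^{-2}\,ds$ with the constant Wronskian normalized to $1$, and using $u(s) \sim e^{-\mu s}$, one gets $u(s)^{-2} \sim e^{2\mu s}$, so $\int_{x_0}^{x} u(s)^{-2}\,ds \sim \tfrac{1}{2\mu} e^{2\mu x}$ and hence $\tilde u(x) \sim \tfrac{1}{2\mu} e^{\mu x} \to \infty$. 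Thus every solution not proportional to $u$ diverges as $x \to +\infty$.

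The main obstacle is purely the asymptotic bookkeeping in the contraction step: one must choose the weighted norm so that the $\sinh$ kernel, which individually carries a growing factor $e^{\mu(y-x)}$, is tamed by the decaying weight $e^{-\mu y}$ coming from membership in $X_R$ together with the $L^1$ decay of $V$. Everything else --- regularity, finite-interval extension, and the reduction-of-order computation --- is routine. I would remark that this is a concrete instance of Levinson's asymptotic theorem for linear systems with an $L^1$ perturbation, which could alternatively be cited to obtain the two asymptotic behaviors $e^{\pm \mu x}(1+o(1))$ directly.
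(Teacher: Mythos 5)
Your proposal is correct and follows essentially the same route as the paper, which cites Lemma 5.2 of \cite{NPS} and sketches precisely this argument: reformulation of the boundary-value problem as a Volterra integral equation solved by contraction (your weighted norm $\sup_{x\ge R} e^{\mu x}|u(x)|$ is equivalent to the paper's ``bounded kernel'' formulation obtained by writing the equation for $w=e^{\mu x}u$), and the constant nonzero Wronskian---your reduction-of-order formula is the same computation---to show that any linearly independent solution grows like $e^{\mu x}$. You simply supply the standard details that the paper delegates to the reference.
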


\begin{proof}
See, e.g., Lemma 5.2 in \cite{NPS}. The existence of a unique decaying solution as $x \to +\infty$
is obtained after the boundary--value problem (\ref{eqofcomp})--(\ref{u-limit}) is reformulated
as the Volterra's integral equation with a bounded kernel. The other linearly independent solution
to the second-order equation (\ref{eqofcomp}) diverges as $x \to +\infty$ thanks
to the $x$-independent and nonzero Wronskian determinant between the two solutions.
\end{proof}

\begin{lemma}
\label{lem-Sturm}
If $u(0) = 0$ (resp. $u'(0) = 0$) for some $\lambda_0 < 1$, the solution $u$ of Lemma \ref{solhyp} is extended into
an odd (resp. even) eigenfunction of the Schr\"{o}dinger equation (\ref{eqofcomp}) on the infinite line.
The point $\lambda_0$ becomes the eigenvalue of the associated Schr\"{o}dinger operator defined in $L^2(\mathbb{R})$.
There exists exactly one $\lambda_0 < 0$ corresponding to $u'(0) = 0$ and a simple eigenvalue $\lambda_0 = 0$
corresponding to $u(0) = 0$, all other such points $\lambda_0$ in $(0,1)$ are bounded away from zero.
\end{lemma}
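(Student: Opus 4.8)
The plan is to reduce the half-line boundary-value problem to the spectral problem for the full-line Schr\"{o}dinger operator
\[
H := -\frac{d^2}{dx^2} + 1 - (2p+1)(p+1)\sech^2(px)
\]
acting on $L^2(\mathbb{R})$, and then to count its discrete eigenvalues by oscillation theory.

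First I would establish the extension claim using the reflection symmetry of the potential $\sech^2(px)$. Given the unique decaying solution $u$ of Lemma~\ref{solhyp} for some $\lambda_0 < 1$, if $u(0) = 0$ I define $\tilde u$ by odd reflection and if $u'(0) = 0$ by even reflection across $x = 0$. In either case the matching conditions at the origin (continuity of $\tilde u$ and of $\tilde u'$) hold precisely because of the imposed boundary condition, so $\tilde u \in C^1(\mathbb{R})$; since the coefficients of (\ref{eqofcomp}) are even and smooth, $\tilde u$ solves the equation on all of $\mathbb{R}$ and is in fact smooth, and it decays exponentially at both ends by symmetry, hence $\tilde u \in L^2(\mathbb{R})$ is an eigenfunction of $H$ with eigenvalue $\lambda_0$. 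Conversely, restricting any odd (resp.\ even) eigenfunction of $H$ to $\mathbb{R}^+$ returns a decaying solution with $u(0) = 0$ (resp.\ $u'(0) = 0$). Thus the set of admissible points $\lambda_0$ coincides exactly with the discrete spectrum of $H$ below the essential spectrum $[1,\infty)$, with $u(0) = 0$ selecting the odd eigenfunctions and $u'(0) = 0$ the even ones.

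Next I would pin down the eigenvalue $\lambda_0 = 0$ explicitly. Differentiating the scalar stationary equation $-\phi'' + \phi - (p+1)\phi^{2p+1} = 0$ and using $\phi^{2p} = \sech^2(px)$ gives $H\phi' = 0$, so $\phi'$ is an eigenfunction of $H$ at $\lambda = 0$. Since $\phi' = -\sech^{1/p}(px)\tanh(px)$ is odd, decays exponentially, and vanishes only at $x = 0$ (a single simple zero), the value $\lambda_0 = 0$ is an odd eigenvalue, consistent with the condition $u(0) = 0$.

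Finally I would count the eigenvalues by the Sturm oscillation theorem for one-dimensional Schr\"{o}dinger operators: ordering the discrete eigenvalues $E_0 < E_1 < \cdots$, the eigenfunction $\psi_n$ has exactly $n$ zeros. Because $\phi'$ has exactly one zero it must be the second eigenfunction, so $E_1 = 0$ is simple and there is exactly one eigenvalue $E_0 < 0$. That ground state is nodeless, hence (by simplicity and reflection symmetry) even, which identifies the unique negative point $\lambda_0$ with $u'(0) = 0$; no negative odd eigenvalue exists, since $\phi'$ is already the lowest odd state. All remaining eigenvalues satisfy $E_n > E_1 = 0$, and since the potential is bounded and exponentially decaying the discrete spectrum in $(-\infty,1)$ is finite, so the points $\lambda_0 \in (0,1)$ are finite in number and bounded away from zero. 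As an independent check one may note that the potential is of P\"{o}schl--Teller type with $\ell(\ell+1) = (2p+1)(p+1)/p^2$, i.e.\ $\ell = 1 + 1/p$, giving eigenvalues $\lambda_n = 1 - ((p+1) - np)^2$ for integers $0 \le n < 1 + 1/p$; this yields $\lambda_0 = -p(p+2)$, $\lambda_1 = 0$, and $\lambda_2 = p(2-p) > 0$ (present only for $p < 1$), in agreement with the counting. I expect the main obstacle to be this counting step --- showing that exactly one eigenvalue lies below zero and that zero is isolated --- where oscillation theory (or the explicit P\"{o}schl--Teller computation) does the essential work; the reflection/extension argument and the identification of the $\lambda = 0$ mode are routine by comparison.
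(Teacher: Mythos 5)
Your proposal is correct and follows essentially the same route as the paper: extension of $u$ to the full line by the reflection symmetry of the potential, identification of $\phi'$ as the odd eigenfunction at $\lambda_0=0$ with a single zero, and Sturm's nodal (oscillation) theorem to conclude there is exactly one simple negative eigenvalue (even) and that all remaining eigenvalues lie in $(0,1)$ away from zero. Your added P\"{o}schl--Teller computation with $\ell = 1 + 1/p$, giving $\lambda_n = 1-((p+1)-np)^2$, is a correct independent verification, but it is supplementary to the argument the paper already uses.
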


\begin{proof}
The solution $u$ in Lemma \ref{solhyp} is extended
to an eigenfunction of the associated Schr\"{o}dinger operator defined in $L^2(\mathbb{R})$
by the reversibility of the Schr\"{o}dinger equation (\ref{eqofcomp}) with
respect to the transformation $x \mapsto -x$. The count of eigenvalues follows by Sturm's nodal theorem since
the odd eigenfunction for the eigenvalue $\lambda_0 = 0$,
$$
\phi'(x) = - \sech^{\frac{1}{p}}(px) \tanh(px)
$$
has one zero on the infinite line. Hence, $\lambda_0 = 0$ is the second eigenvalue of
the associated Schr\"{o}dinger equation with exactly one simple negative eigenvalue $\lambda_0 < 0$
that corresponds to an even eigenfunction. All other eigenvalues in $(0,1)$ are bounded away from zero.
\end{proof}

\begin{remark}
For $p = 1$, the solution $u$ in Lemma \ref{solhyp} is available in the closed analytic form:
$$
u(x) = e^{-\sqrt{1-\lambda}x} \frac{3-\lambda + 3 \sqrt{1-\lambda} \tanh x - 3 \sech^2 x}{3 - \lambda + 3 \sqrt{1-\lambda}}.
$$
In this case, the eigenvalues and eigenfunctions in Lemma \ref{lem-Sturm} are given by
\begin{eqnarray*}
\lambda = -3 : & \quad u(x) = \frac{1}{4} \sech^2 x, \\
\lambda = 0 : & \quad \qquad u(x) = \frac{1}{2} \tanh x \sech x.
\end{eqnarray*}
No other eigenvalues of the associated Schr\"{o}dinger operator on $L^2(\mathbb{R})$ exist in $(-\infty,1)$.
\end{remark}

By using Lemmas \ref{solhyp} and \ref{lem-Sturm}, we can now characterize
$\sigma_p(L_+)$ in $(-\infty,1)$. The following result shows that $\sigma_p(L_+)$
includes a simple negative eigenvalue and a zero eigenvalue
of multiplicity $N-1$.

\begin{lemma}
\label{lem-eig}
Let $u$ be a solution of Lemma \ref{solhyp} for $\lambda \in (-\infty,1)$.
Then, $\lambda_0 \in (-\infty,1)$ is an eigenvalue of $\sigma_p(L_+)$
if and only if either $u(0) = 0$ or $u'(0) = 0$ (both $u(0)$ and $u'(0)$ cannot vanish simultaneously).
Moreover, $\lambda_0$ in $\sigma_p(L_+)$ has
multiplicity $N-1$ if $u(0) = 0$ and multiplicity $1$ if $u'(0) = 0$.
\end{lemma}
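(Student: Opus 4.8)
The plan is to reduce the graph eigenvalue problem $L_+ U = \lambda_0 U$ to the scalar Schr\"{o}dinger equation (\ref{eqofcomp}) supplemented by a finite-dimensional linear system that encodes the vertex conditions. First I would note that $U = (u_1, \dots, u_N)^T \in H^2_{\Gamma}$ is an eigenvector of $L_+$ for $\lambda_0 \in (-\infty, 1)$ if and only if every component $u_j$ solves (\ref{eqofcomp}) with the same $\lambda_0$ and the collection $\{u_j\}$ obeys the continuity and Kirchhoff conditions from (\ref{H2}). Because $U \in L^2(\Gamma)$ forces each $u_j \in L^2(\mathbb{R}^+)$, each $u_j$ must decay as $x \to +\infty$, and Lemma \ref{solhyp} guarantees that the decaying solution of (\ref{eqofcomp}) is unique up to a scalar multiple; hence $u_j = c_j u$ for some constants $c_j \in \mathbb{R}$, where $u$ is the profile normalized by (\ref{u-limit}). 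The linearly independent growing solution is ruled out precisely by square integrability.

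Next I would substitute $u_j = c_j u$ into the vertex conditions. Continuity $u_1(0) = \cdots = u_N(0)$ becomes $c_1 u(0) = \cdots = c_N u(0)$, while the Kirchhoff condition $\sum_{j=1}^N u_j'(0) = 0$ becomes $u'(0) \sum_{j=1}^N c_j = 0$. The eigenvalue problem thus collapses to a homogeneous linear system for the vector $c = (c_1, \dots, c_N)^T \in \mathbb{R}^N$, and the multiplicity of $\lambda_0$ as an eigenvalue of $L_+$ equals the dimension of its solution space. A preliminary observation is that $u(0)$ and $u'(0)$ cannot both vanish: since $u$ solves a second-order linear ODE, vanishing of both Cauchy data at $x = 0$ would force $u \equiv 0$, contradicting the normalization (\ref{u-limit}).

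The argument then divides into three cases. If $u(0) \neq 0$ and $u'(0) \neq 0$, continuity forces $c_1 = \cdots = c_N$ and Kirchhoff gives $N c_1 u'(0) = 0$, so $c = 0$ and $\lambda_0$ is not an eigenvalue. If $u(0) = 0$, then $u'(0) \neq 0$, continuity holds automatically, and Kirchhoff reduces to the single constraint $\sum_{j=1}^N c_j = 0$, which defines an $(N-1)$-dimensional space; hence $\lambda_0$ is an eigenvalue of multiplicity $N-1$. If $u'(0) = 0$, then $u(0) \neq 0$, Kirchhoff holds automatically, and continuity forces $c_1 = \cdots = c_N$, yielding a one-dimensional space and multiplicity $1$. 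These cases are exhaustive and establish every assertion of the lemma.

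I do not expect a serious analytic obstacle, since once the $L^2$ constraint together with Lemma \ref{solhyp} collapses each edge to the single decaying profile, the statement becomes pure linear algebra in the constants $c_j$. The only point requiring care is the correct bookkeeping of the vertex conditions, namely that continuity contributes $N-1$ scalar equations whereas the Kirchhoff flux condition contributes exactly one, so that the two degenerate cases are matched to multiplicities $N-1$ and $1$, respectively.
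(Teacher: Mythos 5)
Your proposal is correct and follows essentially the same route as the paper: each component of an eigenvector must be a scalar multiple of the unique decaying solution $u$ from Lemma \ref{solhyp}, and the vertex conditions in (\ref{H2}) then reduce the eigenvalue problem to the homogeneous linear system (\ref{lin-system}) for the coefficients $c_1,\dots,c_N$. The only difference is the endgame: the paper computes the determinant $\Delta = N\,[u(0)]^{N-1}u'(0)$ of the associated matrix and reads the eigenvalue conditions and multiplicities off its factorization, whereas you analyze the null space of the system case by case. Your version is, if anything, slightly more direct, since it computes the solution-space dimensions explicitly ($N-1$ when $u(0)=0$, $1$ when $u'(0)=0$, $0$ otherwise) rather than inferring them from the multiplicities of the factors in $\Delta$, and it also spells out via ODE uniqueness why $u(0)$ and $u'(0)$ cannot vanish simultaneously, a fact the lemma asserts but the paper's proof leaves implicit.
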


\begin{proof}
Let $\lambda_0 \in (-\infty,1)$ be an eigenvalue of $\sigma_p(L_+)$ and denote
the eigenvector by $U \in H^2_{\Gamma}$. Since $U(x)$ and $U'(x)$ decay to zero as $x \to +\infty$,
by Sobolev's embedding of $H^2(\mathbb{R}^+)$ to the space $C^1(\mathbb{R}^+)$,
we can parameterize $U \in H^2_{\Gamma}$
by using $u$ from Lemma \ref{solhyp} as follows
$$
U(x) = u(x) \begin{pmatrix} c_1 \\ c_2 \\ \vdots \\ c_N \end{pmatrix},
$$
where $(c_1,c_2,\dots,c_N)$ are some coefficients. By using the boundary conditions
in the definition of $H^2_{\Gamma}$ in (\ref{H2}), we obtain a homogeneous linear system
on the coefficients:
\begin{equation}
\label{lin-system}
c_1 u(0) = c_2 u(0) = \dots = c_N u(0), \quad c_1 u'(0) + c_2 u'(0) + \dots + c_N u'(0) = 0.
\end{equation}
The determinant of the associated matrix is
\begin{equation}
\label{determinant}
\Delta = [u(0)]^{N-1} u'(0) \left| \begin{array}{ccccc} 1 & -1 & 0 & \dots & 0 \\
1 & 0 & -1 & \dots & 0 \\ 1 & 0 & 0 & \dots & 0 \\ \vdots & \vdots & \vdots & \ddots & \vdots \\
1 & 1 & 1 & \dots & 1
\end{array} \right| = N [u(0)]^{N-1} u'(0).
\end{equation}
Therefore, $U \neq 0$ is an eigenvector for an eigenvalue $\lambda_0 \in (-\infty,1)$
if and only if $\Delta = 0$, which is only possible in (\ref{determinant}) if either $u(0) = 0$ or $u'(0) = 0$.
Moreover, multiplicity of $u(0)$ and $u'(0)$ in $\Delta$ coincides with the
multiplicity of the eigenvalue $\lambda_0$ because it gives the number of
linearly independent solutions of the homogeneous linear system (\ref{lin-system}).
The assertion of the lemma is proven.
\end{proof}

\begin{corollary}
\label{cor-eig}
There exists exactly one simple negative eigenvalue $\lambda_0 < 0$ in $\sigma_p(L_+)$ and
a zero eigenvalue $\lambda_0 = 0$ in $\sigma_p(L_+)$
of multiplicity $N-1$, all other possible eigenvalues of $\sigma_p(L_+)$ in $(0,1)$ are bounded away from zero.
\end{corollary}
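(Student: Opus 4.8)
The plan is to obtain the corollary as an immediate consequence of the two preceding lemmas, by matching the dichotomy in Lemma \ref{lem-eig} against the eigenvalue count in Lemma \ref{lem-Sturm}. Recall that Lemma \ref{lem-eig} asserts that $\lambda_0 \in (-\infty,1)$ belongs to $\sigma_p(L_+)$ precisely when the decaying solution $u$ from Lemma \ref{solhyp} satisfies either $u(0) = 0$ (with resulting multiplicity $N-1$) or $u'(0) = 0$ (with multiplicity $1$), and that these two conditions are mutually exclusive, since both vanishing simultaneously would force $u \equiv 0$ by uniqueness in Lemma \ref{solhyp}. Lemma \ref{lem-Sturm} in turn identifies the full collection of such points as the eigenvalues of the scalar Schr\"{o}dinger operator on $L^2(\mathbb{R})$, where $u'(0) = 0$ selects even eigenfunctions and $u(0) = 0$ selects odd ones.

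First I would handle the negative eigenvalue. By Lemma \ref{lem-Sturm} there is exactly one value $\lambda_0 < 0$ for which $u'(0) = 0$, corresponding to the unique even ground state of the scalar operator. Since this case falls under the condition $u'(0) = 0$, Lemma \ref{lem-eig} assigns it multiplicity $1$ in $\sigma_p(L_+)$. Because $\lambda_0 < 0$ forces $u(0) \neq 0$ by mutual exclusivity, no other branch can contribute a negative eigenvalue, and therefore $L_+$ possesses exactly one simple negative eigenvalue.

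Next I would locate the zero eigenvalue. Lemma \ref{lem-Sturm} shows that $\lambda_0 = 0$ is a simple eigenvalue of the scalar operator with the odd eigenfunction $\phi'(x) = - \sech^{\frac{1}{p}}(px) \tanh(px)$, which satisfies $u(0) = 0$. Feeding the condition $u(0) = 0$ into Lemma \ref{lem-eig} then yields multiplicity $N-1$ for $\lambda_0 = 0$ in $\sigma_p(L_+)$. Finally, for the remaining eigenvalues I would invoke the last clause of Lemma \ref{lem-Sturm}: any further point $\lambda_0$ with $u(0) = 0$ or $u'(0) = 0$ lies in $(0,1)$ and is bounded away from zero, so by Lemma \ref{lem-eig} each such point is an eigenvalue of $\sigma_p(L_+)$ bounded away from zero, completing the count.

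Since every step is a direct transcription of the two lemmas, I do not anticipate a substantive obstacle. The only point requiring a line of care is the mutual exclusivity of $u(0) = 0$ and $u'(0) = 0$, which guarantees that the negative eigenvalue and the zero eigenvalue arise from genuinely distinct branches, so that the multiplicities $1$ and $N-1$ do not overlap and the total count is consistent.
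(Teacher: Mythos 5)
Your proposal is correct and is essentially the paper's own proof: the paper disposes of the corollary in one line by citing Lemmas \ref{lem-Sturm} and \ref{lem-eig}, and your write-up simply makes the matching of the two dichotomies explicit. One small phrasing point: the absence of a \emph{negative} point with $u(0)=0$ follows from the final clause of Lemma \ref{lem-Sturm} (all remaining points lie in $(0,1)$), not from the mutual exclusivity of $u(0)=0$ and $u'(0)=0$, but since you invoke that clause anyway when handling the remaining eigenvalues, the argument is complete.
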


\begin{proof}
The result follows from Lemmas \ref{lem-Sturm} and \ref{lem-eig}.
\end{proof}

\begin{remark}
For the simple eigenvalue $\lambda_0 < 0$ in $\sigma_p(L_+)$, the corresponding eigenvector is
$$
U = u(x) \begin{pmatrix} 1 \\ 1 \\ \vdots \\ 1 \end{pmatrix},
$$
where $u(x) > 0$ for every $x \in \mathbb{R}^+$ with $u'(0) = 0$. For the eigenvalue
$\lambda_0 = 0$ of multiplicity $N-1$ in $\sigma_p(L_+)$, the invariant subspace of $L_+$
can be spanned by an orthogonal basis of eigenvectors $\{ U^{(1)}, U^{(2)}, \dots, U^{(N-1)}\}$.
The orthogonal basis of eigenvectors can be constructed by induction as follows:
\begin{eqnarray*}
& N = 3 : & \quad U^{(1)} = \phi'(x) \begin{pmatrix} 1 \\ -1 \\ 0 \end{pmatrix}, \quad
U^{(2)} = \phi'(x) \begin{pmatrix} 1 \\ 1 \\ -2 \end{pmatrix},\\
& N = 4 : & \quad U^{(1)} = \phi'(x) \begin{pmatrix} 1 \\ -1 \\ 0 \\ 0 \end{pmatrix}, \quad
U^{(2)} = \phi'(x) \begin{pmatrix} 1 \\ 1 \\ -2 \\ 0 \end{pmatrix}, \quad
U^{(3)} = \phi'(x) \begin{pmatrix} 1 \\ 1 \\ 1 \\ -3 \end{pmatrix},
\end{eqnarray*}
and so on. \label{rem-vectors}
\end{remark}

The following result shows that the operator $L_+$ is positive in the subspace $L^2_c$
associated with a scalar constraint in (\ref{constraint}), provided
the nonlinearity power $p$ is in $(0,2)$, and coercive on a subspace of $L^2_c$ orthogonal to ${\rm ker}(L_+)$.

\begin{lemma}
\label{prop-coercivity-L-plus}
For every $p \in (0,2)$, $\langle L_+ U, U \rangle_{L^2(\Gamma)} \geq 0$ for every $U \in H^1_{\Gamma} \cap L^2_c$,
where $L^2_c$ is given by (\ref{constraint}). Moreover $\langle L_+ U, U \rangle_{L^2(\Gamma)} = 0$ if and only if
$U \in H^1_{\Gamma} \cap L^2_c$ belongs to the $(N-1)$-dimensional subspace
$X_c = {\rm span}\{U^{(1)}, U^{(2)}, \dots, U^{(N-1)}\} \subset L^2_c$ in the kernel of $L_+$.
Consequently, there exists $C_p > 0$ such that
\begin{equation}
\label{coercivity-L-plus}
\langle L_+ U, U \rangle_{L^2(\Gamma)} \geq C_p \| U \|_{H^1(\Gamma)}^2 \quad \mbox{\rm for every \;} U \in H^1_{\Gamma} \cap L^2_c \cap [X_c]^{\perp}.
\end{equation}
\end{lemma}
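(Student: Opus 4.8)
The plan is to invoke the classical \emph{one negative eigenvalue, one constraint} criterion (the slope / Vakhitov--Kolokolov condition), adapted to the nontrivial $(N-1)$-dimensional kernel of $L_+$ exhibited in Corollary \ref{cor-eig}. Throughout I write $U_0 = u(x)(1,\dots,1)^T$ for the eigenvector of the simple negative eigenvalue $\lambda_0<0$ (with $u>0$, $u'(0)=0$, by Remark \ref{rem-vectors}), $X_c=\ker(L_+)=\mathrm{span}\{U^{(1)},\dots,U^{(N-1)}\}$, and I denote by $\gamma>0$ a spectral gap, so that $\sigma(L_+)\cap(0,\infty)\subset[\gamma,\infty)$ by Corollary \ref{cor-eig}. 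Two orthogonality facts are needed at the outset. Since each $U^{(k)}=\phi'(x)\,v^{(k)}$ with $\sum_j v^{(k)}_j=0$ (Remark \ref{rem-vectors}) while $\Phi=\phi(x)(1,\dots,1)^T$, one computes $\langle\Phi,U^{(k)}\rangle_{L^2(\Gamma)}=(\sum_j v^{(k)}_j)\int_0^\infty\phi\phi'\,dx=0$; hence $\Phi\perp\ker(L_+)$, so $L_+^{-1}\Phi$ is well defined on $[X_c]^\perp$ and $X_c\subset L^2_c$ (the scalar constraint in \eqref{constraint} is compatible with the kernel). On the other hand $\langle U_0,\Phi\rangle_{L^2(\Gamma)}=N\int_0^\infty u\phi\,dx\ne 0$ because $u,\phi>0$, so $\Phi$ has a genuine component along the negative direction.

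The crux of the argument, and the step I expect to be the main obstacle, is determining the sign of $\langle L_+^{-1}\Phi,\Phi\rangle$; this is precisely where subcriticality $p<2$ enters. Differentiating the unscaled stationary equation \eqref{eq2} in $\omega$ at $\omega=1$ gives $L_+\,\partial_\omega\Phi_\omega|_{\omega=1}=-\Phi$, so $L_+^{-1}\Phi=-\partial_\omega\Phi_\omega|_{\omega=1}$ and $\langle L_+^{-1}\Phi,\Phi\rangle=-\tfrac12\tfrac{d}{d\omega}Q(\Phi_\omega)|_{\omega=1}$. The scaling \eqref{scaling-transform} yields $Q(\Phi_\omega)=\omega^{\frac1p-\frac12}Q(\Phi)$, whence $\tfrac{d}{d\omega}Q(\Phi_\omega)|_{\omega=1}=(\tfrac1p-\tfrac12)Q(\Phi)>0$ exactly for $p\in(0,2)$. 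Therefore $\langle L_+^{-1}\Phi,\Phi\rangle<0$.

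With $\langle L_+^{-1}\Phi,\Phi\rangle<0$ established, nonnegativity on the constraint follows from a resolvent monotonicity argument. For $\mu\in(\lambda_0,0)$ put $h(\mu):=\langle(L_+-\mu)^{-1}\Phi,\Phi\rangle$. By the spectral theorem $h$ is strictly increasing, $h(\mu)\to-\infty$ as $\mu\to\lambda_0^+$ (since $\langle U_0,\Phi\rangle\ne0$), and $h(\mu)\to\langle L_+^{-1}\Phi,\Phi\rangle<0$ as $\mu\to0^-$ (using $\Phi\perp X_c$, which kills the blow-up from the kernel). Hence $h<0$ on $(\lambda_0,0)$. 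If the constrained infimum $\inf\{\langle L_+U,U\rangle:\|U\|_{L^2}=1,\ U\in H^1_\Gamma\cap L^2_c\}$ were negative, it would be attained at some $U_*$ solving $L_+U_*=\mu U_*+\alpha\Phi$ with $\mu<0$; invertibility of $L_+-\mu$ gives $U_*=\alpha(L_+-\mu)^{-1}\Phi$, and the constraint $\langle U_*,\Phi\rangle=0$ forces $h(\mu)=0$, a contradiction. Thus $\langle L_+U,U\rangle\ge0$ for all $U\in H^1_\Gamma\cap L^2_c$.

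For the equality characterization and the coercivity \eqref{coercivity-L-plus}, I would study $\mu_c:=\inf\{\langle L_+U,U\rangle:\|U\|_{L^2}=1,\ U\in H^1_\Gamma\cap L^2_c\cap[X_c]^\perp\}\ge0$. Since $\sigma_c(L_+)=[1,\infty)$, a minimizing sequence for a value below $1$ cannot lose $L^2$-mass to the continuous spectrum (no spatial escape along the edges, as the potential $\Phi^{2p}$ decays), so $\mu_c$ is attained at some $U_*$. Its Euler--Lagrange equation is $L_+U_*=\mu_c U_*+\alpha\Phi+\sum_k\beta_kU^{(k)}$; projecting onto $X_c$ and using $L_+U^{(k)}=0$ together with $\Phi\perp X_c$ forces every $\beta_k=0$. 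If $\mu_c=0$ this leaves $U_*=\alpha L_+^{-1}\Phi$, and then $\langle U_*,\Phi\rangle=0$ with $\langle L_+^{-1}\Phi,\Phi\rangle\ne0$ gives $\alpha=0$, hence $U_*=0$, contradicting $\|U_*\|_{L^2}=1$. Therefore $\mu_c>0$, so $\langle L_+U,U\rangle\ge\mu_c\|U\|_{L^2}^2$ on the subspace, and G{\aa}rding's inequality upgrades this to \eqref{coercivity-L-plus} for some $C_p>0$. The equality claim then follows by splitting any $U\in H^1_\Gamma\cap L^2_c$ as $U=U_X+U_\perp$ with $U_X\in X_c$ and $U_\perp\in[X_c]^\perp\cap L^2_c$: since $L_+U_X=0$ and the cross term vanishes, $\langle L_+U,U\rangle=\langle L_+U_\perp,U_\perp\rangle$, which equals $0$ if and only if $U_\perp=0$, i.e. if and only if $U\in X_c$.
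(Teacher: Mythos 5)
Your proposal is correct and follows essentially the same route as the paper: both arguments hinge on the sign of $\langle L_+^{-1}\Phi,\Phi\rangle = -\tfrac{1}{2}\tfrac{d}{d\omega}Q(\Phi_\omega)\big|_{\omega=1}$, computed from the scaling (\ref{scaling-transform}) to be negative precisely for $p\in(0,2)$, combined with the kernel orthogonality $\langle U^{(k)},\Phi\rangle_{L^2(\Gamma)}=0$ and a G{\aa}rding-type upgrade to $H^1$ coercivity. The only difference is that where the paper cites Theorem 3.3 of \cite{GSS} as a black box, you re-derive that one-negative-eigenvalue/one-constraint criterion in-line (resolvent monotonicity of $h(\mu)=\langle(L_+-\mu)^{-1}\Phi,\Phi\rangle$ plus Lagrange multipliers and attainment below the essential spectrum), which is exactly the standard proof of the cited result.
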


\begin{proof}
Since $\sigma_c(L_+) = \sigma(- \Delta + 1) = [1,\infty)$ by Weyl's Theorem, the eigenvalues
of $\sigma_p(L_+)$ at $\lambda_0 < 0$ and $\lambda = 0$ given by Corollary \ref{cor-eig} are isolated.
Since $\langle U^{(k)}, \Phi \rangle_{L^2(\Gamma)} = 0$ for every $1 \leq k \leq N-1$, $L_+^{-1} \Phi$ exists
in $L^2(\Gamma)$ and is in fact given by $L_+^{-1} \Phi = -\partial_{\omega} \Phi_{\omega} |_{\omega = 1}$
up to an addition of an arbitrary element in ${\rm ker}(L_+)$.
By the well-known result (see Theorem 3.3 in \cite{GSS}), $L_+ |_{L^2_c}$ (that is,
$L_+$ restricted on subspace $L^2_c$) is nonnegative if and only if
\begin{equation}
\label{positivity-Phi}
0 \geq \langle L_+^{-1} \Phi, \Phi \rangle_{L^2(\Gamma)} = - \langle \partial_{\omega} \Phi_{\omega} |_{\omega = 1}, \Phi \rangle_{L^2(\Gamma)}
= -\frac{1}{2} \frac{d}{d \omega} \| \Phi_{\omega} \|_{L^2(\Gamma)}^2 \biggr|_{\omega = 1}.
\end{equation}
Moreover, ${\rm ker}(L_+ |_{L^2_c}) = {\rm ker}(L_+)$ if $\langle L_+^{-1} \Phi, \Phi \rangle_{L^2(\Gamma)} \neq 0$.
By the direct computation, we obtain
$$
\| \Phi_{\omega} \|_{L^2(\Gamma)}^2 = N \omega^{\frac{1}{p}-\frac{1}{2}} \int_0^{\infty} \phi(z)^2 dz
$$
so that
\begin{equation}
\label{norm-squared}
 \frac{d}{d \omega} \| \Phi_{\omega} \|_{L^2(\Gamma)}^2 = N \left( \frac{1}{p} - \frac{1}{2} \right)
\omega^{\frac{1}{p}-\frac{3}{2}} \int_0^{\infty} \phi(z)^2 dz,
\end{equation}
so that $L_+ |_{L^2_c} \geq 0$ if $p \in (0,2]$ and ${\rm ker}(L_+ |_{L^2_c}) = {\rm ker}(L_+)$ if $p \in (0,2)$.
This argument gives the first two assertions of the lemma. The coercivity bound (\ref{coercivity-L-plus})
follows from the spectral theorem in $L^2_c$ and G{\aa}rding inequality.
\end{proof}

\begin{proof1}{\em of Theorem \ref{theorem-positivity}.}
It follows from the expansion (\ref{second-variation-Lambda}) that
$$
\frac{1}{2} \langle \Lambda''(\Phi) V, V \rangle_{L^2(\Gamma)} =
\langle L_+ U, U \rangle_{L^2(\Gamma)} +
\langle L_- W, W \rangle_{L^2(\Gamma)} \quad \mbox{\rm with} \;\; V = U + i W,
$$
where $U,W \in H^1_{\Gamma}$ are real-valued. The result of Theorem \ref{theorem-positivity}
follows by Lemmas \ref{prop-coercivity-L-minus} and \ref{prop-coercivity-L-plus}.
\end{proof1}

\section{Proof of Theorem \ref{theorem-saddle}}

To prove Theorem \ref{theorem-saddle}, it is sufficient to work with real-valued perturbations $U \in H^1_{\Gamma} \cap L^2_c$
to the critical point $\Phi \in H^1_{\Gamma}$ of the action functional $\Lambda$.
Assuming $p \geq \frac{1}{2}$, we substitute $\Psi = \Phi + U$ with real-valued $U \in H^1_{\Gamma}$
into $\Lambda(\Psi)$ and expand in $U$ to obtain
\begin{equation}
\label{Lambda-nonlinear}
\Lambda(\Phi + U) = \Lambda(\Phi) + \langle L_+ U, U \rangle_{L^2(\Gamma)} - \frac{2}{3} p(p+1)(2p+1)\langle \Phi^{2p-1} U^2, U \rangle_{L^2(\Gamma)}
+ S(U),
\end{equation}
where
\begin{eqnarray*}
S(U) = \left\{ \begin{array}{l} {\rm o}(\| U \|_{H^1(\Gamma)}^3), \quad p \in \left(\frac{1}{2},1\right), \\
{\rm O}(\| U \|_{H^1(\Gamma)}^4), \quad p \geq 1. \end{array} \right.
\end{eqnarray*}
Compared to the expansion (\ref{second-variation-Lambda}), we have set $W = 0$ and
have expanded the cubic term explicitly, under the additional assumption $p \geq \frac{1}{2}$.
In what follows, we inspect convexity of $\Lambda(\Phi + U)$ with respect
to the small perturbation $U \in H^1_{\Gamma} \cap L^2_c$.

The quadratic form $\langle L_+ U, U \rangle_{L^2(\Gamma)}$ is associated with the same operator $L_+$ given by (\ref{Lplus}).
By Lemma \ref{prop-coercivity-L-plus}, ${\rm ker}(L_+) \equiv X_c = {\rm span}\{U^{(1)},U^{(2)},\dots,U^{(N-1)}\}$
for every $p > 0$, where the orthogonal vectors $\{U^{(1)},U^{(2)},\dots,U^{(N-1)}\}$
are constructed inductively in Remark \ref{rem-vectors}. Furthermore,
by Lemma \ref{prop-coercivity-L-plus}, if $U \in H^1_{\Gamma} \cap L^2_c$,
that is, if $U$ satisfies $\langle U, \Phi \rangle_{L^2(\Gamma)} = 0$, then
the quadratic form $\langle L_+ U, U \rangle_{L^2(\Gamma)}$ is positive for $p \in (0,2)$,
whereas if $U \in H^1_{\Gamma} \cap L^2_c \cap [X_c]^{\perp}$, the quadratic form is coercive.
Hence, we inspect positivity of $\Lambda(\Phi + U) - \Lambda(\Phi)$ and
use the orthogonal decomposition for $U \in H^1_{\Gamma} \cap L^2_c$:
\begin{equation}
\label{decomposition-U}
U = c_1 U^{(1)} + c_2 U^{(2)} + \dots + c_{N-1} U^{(N-1)} + U^{\perp},
\end{equation}
where $U^{\perp} \in H^1_{\Gamma} \cap L^2_c \cap [X_c]^{\perp}$. Therefore, $\langle U^{\perp},U^{(j)} \rangle_{L^2(\Gamma)} = 0$ for every $j$
and the coefficients $(c_1,c_2,\dots,c_{N-1})$ are found uniquely by
$$
c_j = \frac{\langle U, U^{(j)} \rangle_{L^2(\Gamma)}}{\| U^{(j)} \|^2_{L^2(\Gamma)}}, \quad \mbox{\rm for every \;} j.
$$
The following result shows how to define a unique mapping from $c = (c_1,c_2,\dots,c_{N-1})^t \in \mathbb{R}^{N-1}$
to $U^{\perp} \in H^1_{\Gamma} \cap L^2_c \cap [X_c]^{\perp}$ for small $c$.

\begin{lemma}
\label{prop-minimization}
For every $p \in \left[ \frac{1}{2},2\right)$, there exists $\delta > 0$ and $A > 0$
such that for every $c \in \mathbb{R}^{N-1}$ satisfying $\| c \| \leq \delta$,
there exists a unique minimizer $U^{\perp} \in H^1_{\Gamma} \cap L^2_c \cap [X_c]^{\perp}$
of the variational problem
\begin{equation}
\label{min-problem}
\inf_{U^{\perp} \in H^1_{\Gamma} \cap L^2_c \cap [X_c]^{\perp}}
\left[ \Lambda(\Phi + c_1 U^{(1)} + c_2 U^{(2)} + \dots + c_{N-1} U^{(N-1)} + U^{\perp}) - \Lambda(\Phi) \right].
\end{equation}
satisfying
\begin{equation}
\label{bound-on-U-perp}
\| U^{\perp} \|_{H^1(\Gamma)} \leq A \| c \|^2.
\end{equation}
\end{lemma}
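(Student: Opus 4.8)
The plan is to realize this as a standard Lyapunov--Schmidt reduction in which the $X_c$-component of the perturbation is frozen at $P(c) := c_1 U^{(1)} + \dots + c_{N-1}U^{(N-1)}$ and $U^\perp$ is slaved to $c$ through the Euler--Lagrange equation of the restricted functional. Writing $Y := H^1_\Gamma \cap L^2_c \cap [X_c]^\perp$, a point $U^\perp \in Y$ is a critical point of $F(c,U^\perp) := \Lambda(\Phi + P(c)+U^\perp)-\Lambda(\Phi)$ over $Y$ if and only if $\langle \Lambda'(\Phi + P(c)+U^\perp), V\rangle_{L^2(\Gamma)} = 0$ for all $V \in Y$. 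Differentiating the expansion \eqref{Lambda-nonlinear} and using that $P(c) \in X_c = {\rm ker}(L_+)$, so that $L_+ P(c) = 0$ and the quadratic form decouples, this equation reads
$$
2\langle L_+ U^\perp, V\rangle_{L^2(\Gamma)} = 2p(p+1)(2p+1)\langle \Phi^{2p-1}(P(c)+U^\perp)^2, V\rangle_{L^2(\Gamma)} - \langle S'(P(c)+U^\perp), V\rangle_{L^2(\Gamma)}, \quad V\in Y.
$$
By Lemma \ref{prop-coercivity-L-plus} the bilinear form $a(U,V):=\langle L_+ U,V\rangle_{L^2(\Gamma)}$ is bounded and coercive on $Y$, so by the Lax--Milgram theorem it is an isomorphism of $Y$ onto $Y^*$ and I may rewrite the Euler--Lagrange equation as a fixed-point equation $U^\perp = T_c(U^\perp)$, where $T_c(U^\perp)\in Y$ is the Lax--Milgram representative of the right-hand side above.

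Next I would run the Banach fixed-point theorem on the ball $B := \{U^\perp \in Y : \|U^\perp\|_{H^1(\Gamma)}\le A\|c\|^2\}$ for a constant $A$ to be fixed. Two estimates are needed. For the self-map property I bound $\|T_c(U^\perp)\|_{H^1(\Gamma)} \lesssim \|\Phi^{2p-1}(P(c)+U^\perp)^2\|_{Y^*} + \|S'(P(c)+U^\perp)\|_{Y^*}$; since $\|P(c)\|_{H^1(\Gamma)} \sim \|c\|$ dominates $\|U^\perp\|_{H^1(\Gamma)} \le A\|c\|^2$ on $B$, the quadratic term is $O(\|c\|^2)$ by $H^1(\Gamma)\hookrightarrow L^4(\Gamma)$ and the remainder is of higher order, whence $\|T_c(U^\perp)\|_{H^1(\Gamma)} \le A\|c\|^2$ once $A$ is large and $\delta$ small. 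For the contraction property I use the factorization $(P(c)+U_1)^2-(P(c)+U_2)^2 = (2P(c)+U_1+U_2)(U_1-U_2)$ to get $\|T_c(U_1)-T_c(U_2)\|_{H^1(\Gamma)} \lesssim \|c\|\,\|U_1-U_2\|_{H^1(\Gamma)}$, a genuine contraction for $\delta$ small. This yields a unique $U^\perp=U^\perp(c)\in B$ solving the Euler--Lagrange equation and obeying \eqref{bound-on-U-perp}. The quadratic gain is structural: because $X_c$ is exactly the kernel of $L_+$, the quadratic form $\langle L_+\cdot,\cdot\rangle$ produces no coupling between $P(c)$ and $U^\perp$, so the lowest-order forcing in the $U^\perp$-equation is the cubic term $\Phi^{2p-1}P(c)^2 = O(\|c\|^2)$, with no $O(\|c\|)$ contribution.

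Finally I would upgrade the critical point to the unique minimizer. On a fixed small ball $\|U^\perp\|_{H^1(\Gamma)}\le r$ the second variation of $U^\perp \mapsto F(c,U^\perp)$ is a bounded perturbation of $2\langle L_+\cdot,\cdot\rangle$ by terms controlled by $\|P(c)+U^\perp\|_{L^\infty(\Gamma)} = O(\|c\|+r)$, so by the coercivity \eqref{coercivity-L-plus} it stays uniformly positive for $\delta,r$ small; hence $F(c,\cdot)$ is strictly convex on that ball and its unique interior critical point $U^\perp(c)$, which satisfies $A\|c\|^2<r$, is the unique minimizer. The main obstacle I anticipate is in the nonlinear functional analysis rather than in the structure: I must verify that $U\mapsto \Phi^{2p-1}U^2$ and $U \mapsto S'(U)$ define maps that are bounded and Lipschitz from small $H^1(\Gamma)$-balls into $Y^*$ with constants uniform in $c$, and that $F(c,\cdot)$ is genuinely $C^2$. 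This is where $p\in[\tfrac12,2)$ enters: the restriction $p\ge \tfrac12$ guarantees $2p-1\ge 0$ so that $\Phi^{2p-1}$ is bounded and exponentially decaying, while the low regularity of the power nonlinearity for $p\in[\tfrac12,1)$ forces me to rely on the Sobolev embeddings $H^1(\mathbb{R}^+)\hookrightarrow L^\infty \cap L^q$ and on the remainder $S(U)={\rm o}(\|U\|_{H^1(\Gamma)}^3)$ rather than on a clean higher-order Taylor expansion.
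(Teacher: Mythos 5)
Your proposal is correct, and it follows the same Lyapunov--Schmidt skeleton as the paper: the same orthogonal decomposition, the same Euler--Lagrange equation for $U^{\perp}$ posed on $H^1_{\Gamma} \cap L^2_c \cap [X_c]^{\perp}$, the same key input (the coercivity bound \eqref{coercivity-L-plus} of Lemma \ref{prop-coercivity-L-plus}, available precisely because $X_c = {\rm ker}(L_+)$), and the same structural observation that the forcing is $O(\| c \|^2)$ since $L_+$ annihilates $c_1 U^{(1)} + \dots + c_{N-1} U^{(N-1)}$, so no $O(\|c\|)$ term survives. Where you diverge is the machinery. The paper applies the implicit function theorem to $F(U^{\perp},c) = \Pi_c \Lambda'(\Phi + P(c) + U^{\perp})$, using invertibility of $D_{U^{\perp}}F(0,0) = \Pi_c L_+ \Pi_c$, and then extracts the bound \eqref{bound-on-U-perp} \emph{a posteriori} from smoothness of the solution map: $F$ is $C^2$, $D_c F(0,0) = 0$, hence $D_c U^{\perp}(0) = 0$ and the map $c \mapsto U^{\perp}(c)$ is quadratically small. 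You instead hand-roll the contraction (Lax--Milgram plus Banach fixed point) directly on the ball $\| U^{\perp} \|_{H^1(\Gamma)} \leq A \| c \|^2$, so existence, uniqueness and the quadratic bound come out in a single stroke. The two routes are morally equivalent (the implicit function theorem is proved by exactly your contraction), but yours is slightly more robust at the endpoint $p = \tfrac12$: there the third derivative of the power nonlinearity behaves like ${\rm sign}(u)$, so $F$ is only $C^{1,1}$ rather than genuinely $C^2$, and the paper's regularity-based derivation of \eqref{bound-on-U-perp} is borderline, whereas your argument needs only Lipschitz estimates on the derivative, which survive. Your final convexity step (second variation stays uniformly positive on a fixed small ball, hence the critical point is the unique minimizer there) matches the paper's terse closing remark, and both arguments share the same caveat: since the action is unbounded below along rays in $U^{\perp}$, the infimum in \eqref{min-problem} must be understood locally, near $U^{\perp} = 0$ --- your formulation via strict convexity on a ball makes this local character explicit.
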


\begin{proof}
First, we find the critical point of $\Lambda(\Phi + U)$ with respect to $U^{\perp} \in H^1_{\Gamma} \cap L^2_c \cap [X_c]^{\perp}$
for a given small $c \in \mathbb{R}^{N-1}$. Therefore, we set up the Euler--Lagrange equation in the form
$F(U^{\perp}, c) = 0$, where
\begin{equation}
\label{ELeq}
F(U^{\perp},c) : X \times \mathbb{R}^{N-1} \mapsto Y, \quad
X := H^1_{\Gamma} \cap L^2_c \cap [X_c]^{\perp}, \quad
Y := H^{-1}_{\Gamma} \cap L^2_c \cap [X_c]^{\perp}
\end{equation}
is given explicitly by
\begin{equation*}
F(U^\perp, c) := L_+ U^{\perp} - p(p+1)(2p+1) \Pi_c \Phi^{2p-1} \left( \sum_{j=1}^{N-1} c_j U^{(j)} + U^{\perp} \right)^2
- \Pi_c R\left( \sum_{j=1}^{N-1} c_j U^{(j)} + U^{\perp} \right),
\end{equation*}
where $\Pi_c : L^2(\Gamma) \mapsto L^2_c \cap [X_c]^{\perp}$ is the orthogonal projection operator
and $R(U)$ satisfies
\begin{eqnarray*}
\| R(U) \|_{H^1(\Gamma)} = \left\{ \begin{array}{l} {\rm o}(\| U \|_{H^1(\Gamma)}^2), \quad p \in \left(\frac{1}{2},1\right), \\
{\rm O}(\| U \|_{H^1(\Gamma)}^3), \quad p \geq 1. \end{array} \right.
\end{eqnarray*}
Operator function $F$ satisfies the conditions of the implicit function theorem:
\begin{itemize}
\item[(i)] $F$ is a $C^2$ map from $X \times \mathbb{R}^{N-1}$ to $Y$;
\item[(ii)] $F(0,0) = 0$;
\item[(iii)] $D_{U^\perp} F(0,0) = \Pi_c L_+ \Pi_c : X \mapsto Y$ is invertible with a bounded inverse from $Y$ to $X$.
\end{itemize}
By the implicit function theorem (see Theorem 4.E in \cite{Zeidel}), there are $r>0$ and $\delta>0$ such that
for each $c \in \mathbb{R}^{N-1}$ with $\| c \| \leq \delta$ there exists a unique solution
$U^\perp \in X$ of the operator equation $F(U^\perp, c) = 0$ with $\| U^\perp \|_{H^1(\Gamma)} \leq r$
such that the map
\begin{equation}
\label{map-U-perp}
\mathbb{R}^{N-1}  \ni c  \to U^\perp(c) \in X
\end{equation}
is $C^2$ near $c = 0$ and $U^{\perp}(0) = 0$. Since $D_{U^\perp} F(0,0) = \Pi_c L_+ \Pi_c : X \mapsto Y$
is strictly positive, the associated quadratic form is coercive according to the bound (\ref{coercivity-L-plus}),
hence the critical point $U^{\perp} = U^{\perp}(c)$ is a unique infimum of the variational problem (\ref{min-problem})
near $c = 0$.

It remains to prove the bound (\ref{bound-on-U-perp}). To show this, we note that
$$
F(0,c) = - p(p+1)(2p+1) \Pi_c \Phi^{2p-1} \left( \sum_{j=1}^{N-1} c_j U^{(j)} \right)^2
- \Pi_c R\left( \sum_{j=1}^{N-1} c_j U^{(j)}\right)
$$
satisfies $\| F(0,c) \|_{L(\Gamma)} \leq \tilde{A} \|c\|^2$ for a $c$-independent constant $\tilde{A} > 0$.
Since $F$ is a $C^2$ map from $X \times \mathbb{R}^{N-1}$ to $Y$ and $D_c F(0,0) = 0$,
we have $D_c U^{\perp}(0) = 0$, so that the $C^2$ map (\ref{map-U-perp}) satisfies the bound (\ref{bound-on-U-perp}).
\end{proof}

\begin{proof1}{\em of Theorem \ref{theorem-saddle}.}
Let us denote
\begin{equation}
\label{minimum-U-perp}
M(c) := \inf_{U^{\perp} \in H^1_{\Gamma} \cap L^2_c \cap [X_c]^{\perp}}
\left[ \Lambda(\Phi + c_1 U^{(1)} + c_2 U^{(2)} + \dots + c_{N-1} U^{(N-1)} + U^{\perp}) - \Lambda(\Phi) \right],
\end{equation}
where the infimum is achieved by Lemma \ref{prop-minimization} for sufficiently small $c \in \mathbb{R}^{N-1}$.
Thanks to the representation (\ref{Lambda-nonlinear}) and the bound (\ref{bound-on-U-perp}),
we obtain $M(c) = M_0(c) + \widetilde{M}(c)$, where
\begin{equation}
\label{energy-cube}
M_0(c) :=  - \frac{2}{3} p(p+1)(2p+1) \sum_{i=1}^{N-1} \sum_{j=1}^{N-1} \sum_{k=1}^{N-1} c_i c_j c_k
\langle \Phi^{2p-1} U^{(i)} U^{(j)}, U^{(k)} \rangle_{L^2(\Gamma)}
\end{equation}
and
\begin{eqnarray*}
\tilde{M}(c) = \left\{ \begin{array}{l} {\rm o}(\| c\|^3), \quad p \in \left(\frac{1}{2},1\right), \\
{\rm O}(\| c\|^4), \quad p \geq 1. \end{array} \right.
\end{eqnarray*}
In order to show that $M_0(c)$ is sign-indefinite near $c = 0$,
it is sufficient to show that at least one diagonal cubic coefficient in $M_0(c)$ is nonzero. Since
\begin{eqnarray*}
\int_0^{+\infty} \phi^{2p-1} (\phi')^3 dx = -\int_0^{+\infty} \sech^{\frac{2p+2}{p}}(px) \tanh^3(px) dx = -\frac{p}{2(p+1)(2p+1)},
\end{eqnarray*}
we obtain
\begin{equation}
\label{energy-cube-nonzero}
\langle \Phi^{2p-1} U^{(j)} U^{(j)}, U^{(j)} \rangle_{L^2(\Gamma)} = \frac{p j (j^2 - 1)}{2(p+1)(2p+1)} \neq 0, \quad j \geq 2,
\end{equation}
where the algorithmic construction of the orthogonal vectors $\{U^{(1)},U^{(2)},\dots,U^{(N-1)}\}$
in Remark \ref{rem-vectors} has been used. Since the diagonal coefficients in front of the cubic terms
$c_2^3, c_3^3, \dots, c_{N-1}^3$ in $M_0(c)$ are nonzero, $M_0(c)$ and hence $M(c)$ is sign-indefinite near $c = 0$.
\end{proof1}

\begin{remark}
We give explicit expressions for the function $M_0(c)$:
\begin{eqnarray*}
& N = 3 : & \quad M_0(c) = 2p^2 (c_1^2 - c_2^2) c_2, \\
& N = 4 : & \quad M_0(c) = 2p^2 ( c_1^2 c_2 + c_1^2 c_3 - c_2^3 + 3 c_2^2 c_3 - 4 c_3^3),
\end{eqnarray*}
and so on.
\end{remark}

\section{Proof of Theorem \ref{theorem-instability}}

We develop the proof of instability of the nonlinear saddle point $\Phi$ of the constrained action functional
$\Lambda$ by using the energy method. The steps in the proof of Theorem \ref{theorem-instability} are as follows.

First, we use the gauge symmetry
and project a unique global solution to the NLS equation (\ref{eq1}) with $p \in (0,2)$ in $H^1_{\Gamma}$
to the modulated stationary states $\{ e^{i \theta} \Phi_{\omega}\}_{\theta,\omega}$ with $\omega$ near $\omega_0 = 1$
and the symplectically orthogonal remainder term $V$.
Second, we project the remainder term $V$ into the $2(N-1)$-dimensional subspace associated
with the $(N-1)$-dimensional subspace $X_c$ defined in Theorem \ref{theorem-saddle}
and the symplectically orthogonal complement $V^{\perp}$.
Third, we define a truncated
Hamiltonian system of $(N-1)$ degrees of freedom for the coefficients of the projection on $X_c$.
Fourth, we use the energy conservation to control
globally the time evolution of $\omega$ and $V^{\perp}$ in terms of
the initial conditions and the reduced energy for the finite-dimensional Hamiltonian
system.
Finally, we transfer the instability of the zero equilibrium in
the finite-dimensional system to the
instability result (\ref{orbital-instab}) for the NLS equation (\ref{eq1}).

\subsection{Step 1: Modulated stationary states and a symplectically orthogonal remainder}

We start with the standard result, which holds if
$\langle \Phi_{\omega}, \partial_{\omega} \Phi_{\omega} \rangle_{L^2(\Gamma)} \neq 0$.

\begin{lemma}
\label{lem-orthogonal}
For every $p \in (0,2)$, there exists $\delta_0 > 0$ such that
for every $\Psi \in H^1_{\Gamma}$ satisfying
\begin{equation}
\label{orth-given}
\delta := \inf_{\theta \in \mathbb{R}} \| e^{-i \theta} \Psi - \Phi \|_{H^1(\Gamma)} \leq \delta_0,
\end{equation}
there exists a unique choice for real-valued $(\theta,\omega)$ and real-valued $U,W \in H^1_{\Gamma}$
in the orthogonal decomposition
\begin{equation}
\label{orth-decomposition}
\Psi = e^{i \theta} \left[ \Phi_{\omega} + U + i W \right], \quad \langle U, \Phi_{\omega} \rangle_{L^2(\Gamma)} =
\langle W, \partial_{\omega} \Phi_{\omega} \rangle_{L^2(\Gamma)} = 0,
\end{equation}
satisfying the estimate
\begin{equation}
\label{orth-bound}
| \omega - 1| + \| U + i W \|_{H^1(\Gamma)} \leq C \delta,
\end{equation}
for some positive constant $C > 0$.
\end{lemma}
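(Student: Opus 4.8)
The plan is to prove Lemma~\ref{lem-orthogonal} by the implicit function theorem, using the gauge symmetry to convert the orbital distance in \eqref{orth-given} into an ordinary pointwise distance. First I would exploit the $2\pi$-periodicity and continuity of $\theta \mapsto \| e^{-i\theta}\Psi - \Phi\|_{H^1(\Gamma)}$: the infimum in \eqref{orth-given} is attained at some minimizer $\theta_*$, so after replacing $\Psi$ by $e^{-i\theta_*}\Psi$ one may assume $\| \Psi - \Phi\|_{H^1(\Gamma)} = \delta \leq \delta_0$. The total phase is recovered at the end as $\theta = \theta_* + \tilde{\theta}$, and uniqueness of $(\theta,\omega,U,W)$ comes from the local uniqueness supplied by the implicit function theorem.

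Since $\Phi_\omega$ is real-valued, the decomposition \eqref{orth-decomposition} forces $U = \mathrm{Re}(e^{-i\theta}\Psi) - \Phi_\omega$ and $W = \mathrm{Im}(e^{-i\theta}\Psi)$, so the two scalar orthogonality constraints reduce to a root-finding problem $G(\theta,\omega) = 0$ for the smooth map $G : \mathbb{R}^2 \to \mathbb{R}^2$,
\[
G(\theta,\omega) := \begin{pmatrix} \langle \mathrm{Re}(e^{-i\theta}\Psi) - \Phi_\omega, \Phi_\omega \rangle_{L^2(\Gamma)} \\[2pt] \langle \mathrm{Im}(e^{-i\theta}\Psi), \partial_\omega \Phi_\omega \rangle_{L^2(\Gamma)} \end{pmatrix}.
\]
The explicit scaling \eqref{scaling-transform}, namely $\Phi_\omega(x) = \omega^{1/(2p)}\Phi(\omega^{1/2}x)$, shows that $\omega \mapsto \Phi_\omega$ is smooth into $H^1_\Gamma$ for $\omega > 0$, so $G$ is smooth in $(\theta,\omega)$ and Lipschitz in $\Psi$, with $G(0,1) = 0$ at $\Psi = \Phi$.

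The heart of the argument is the invertibility of $\partial_{(\theta,\omega)} G$ at $(\theta,\omega) = (0,1)$ and $\Psi = \Phi$. Using $\partial_\theta e^{-i\theta} = -i e^{-i\theta}$ together with the reality of $\Phi$, I would compute that the two diagonal entries vanish while both off-diagonal entries equal $-\langle \Phi, \partial_\omega \Phi_\omega|_{\omega=1}\rangle_{L^2(\Gamma)}$, whence
\[
\det \partial_{(\theta,\omega)} G \big|_{(0,1)} = -\langle \Phi, \partial_\omega \Phi_\omega|_{\omega=1}\rangle_{L^2(\Gamma)}^2.
\]
This is nonzero precisely when $\langle \Phi_\omega, \partial_\omega \Phi_\omega \rangle_{L^2(\Gamma)} \neq 0$, and by \eqref{norm-squared} this inner product equals $\tfrac12 \tfrac{d}{d\omega}\|\Phi_\omega\|_{L^2(\Gamma)}^2 = \tfrac{N}{2}\big(\tfrac1p - \tfrac12\big)\int_0^\infty \phi^2\,dz$ at $\omega = 1$, which is nonzero exactly for $p \in (0,2)$. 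The implicit function theorem then delivers a unique $C^1$ map $\Psi \mapsto (\theta,\omega)(\Psi)$ near $(0,1)$ for $\Psi$ near $\Phi$, and hence unique $(U,W)$ through the formulas above.

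Finally, the estimate \eqref{orth-bound} follows from the Lipschitz dependence $|\tilde{\theta}| + |\omega - 1| \leq C' \|\Psi - \Phi\|_{H^1(\Gamma)}$ provided by the implicit function theorem, combined with $\|\Phi_\omega - \Phi\|_{H^1(\Gamma)} \leq C''|\omega - 1|$ from smoothness of the scaling, so that $\|U + iW\|_{H^1(\Gamma)} \leq \|e^{-i\theta}\Psi - \Phi\|_{H^1(\Gamma)} + \|\Phi - \Phi_\omega\|_{H^1(\Gamma)} = O(\delta)$. The main obstacle is exactly the non-degeneracy step: the Jacobian determinant is governed by the slope $\tfrac{d}{d\omega}\|\Phi_\omega\|^2$ (a Vakhitov--Kolokolov-type condition), so the hypothesis $p < 2$ is genuinely essential rather than technical, since at $p = 2$ the determinant vanishes and the modulation equations would have to be closed by a different choice of constraints. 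The only other point requiring care is the passage from the non-compact orbital infimum in \eqref{orth-given} to a local problem, which is dispatched by the periodicity reduction in the first step.
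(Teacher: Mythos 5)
Your proposal is correct and follows essentially the same route as the paper: both set up the same map $G(\theta,\omega)$ encoding the two orthogonality constraints, both reduce invertibility of the $2\times 2$ Jacobian to the non-vanishing of $\langle \Phi, \partial_\omega \Phi_\omega|_{\omega=1}\rangle_{L^2(\Gamma)}$ via the slope of $\omega \mapsto \|\Phi_\omega\|_{L^2(\Gamma)}^2$ (which is why $p<2$ is needed), and both conclude by the implicit/inverse function theorem with Lipschitz bounds giving \eqref{orth-bound}. The only cosmetic difference is that you gauge-fix at the attained minimizer $\theta_*$ and linearize at the exact point $(0,1;\Phi)$, whereas the paper linearizes at $(\theta_0,1;\Psi)$ and absorbs the resulting $O(\delta)$ perturbation of the Jacobian into the invertibility estimate; these are equivalent implementations of the same argument.
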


\begin{proof}
Let us define the following vector function $G(\theta,\omega;\Psi) : \mathbb{R}^2 \times H^1_{\Gamma} \mapsto \mathbb{R}^2$ given by
$$
G(\theta,\omega;\Psi) := \left[ \begin{array}{c} \langle {\rm Re}(e^{-i \theta} \Psi - \Phi_{\omega}), \Phi_{\omega} \rangle_{L^2(\Gamma)} \\
 \langle {\rm Im}(e^{-i \theta} \Psi - \Phi_{\omega}), \partial_{\omega} \Phi_{\omega} \rangle_{L^2(\Gamma)} \end{array} \right],
$$
the zeros of which represent the orthogonal constraints in (\ref{orth-decomposition}).

Let $\theta_0$ be the argument in $\inf_{\theta \in \mathbb{R}} \| e^{-i \theta} \Psi - \Phi \|_{H^1(\Gamma)}$
for a given $\Psi \in H^1_{\Gamma}$ satisfying (\ref{orth-given}).
Since the map $\mathbb{R} \ni \omega \mapsto \Phi_{\omega} \in L^2(\Gamma)$ is smooth,
the vector function $G$ is a $C^{\infty}$ map from $\mathbb{R}^2 \times H^1_{\Gamma}$ to $\mathbb{R}^2$.
Thanks to the bound (\ref{orth-given}), there exists a $\delta$-independent constant $C > 0$ such that
$|G(\theta_0,1;\Psi)| \leq C \delta$. Also we obtain
\begin{eqnarray*}
D_{(\theta,\omega)} G(\theta_0,1;\Psi) & = & - \left[ \begin{matrix} 0 &
\langle \Phi, \partial_{\omega} \Phi_{\omega} |_{\omega = 1} \rangle_{L^2(\Gamma)}  \\
\langle \Phi, \partial_{\omega} \Phi_{\omega} |_{\omega = 1} \rangle_{L^2(\Gamma)} & 0 \end{matrix} \right] \\
& \phantom{t} &
+ \left[ \begin{matrix} \langle {\rm Im}(e^{-i \theta_0} \Psi - \Phi), \Phi \rangle_{L^2(\Gamma)} &
\langle {\rm Re}(e^{-i \theta_0} \Psi - \Phi), \partial_{\omega} \Phi_{\omega} |_{\omega = 1}\rangle_{L^2(\Gamma)} \\
- \langle {\rm Re}(e^{-i \theta_0} \Psi - \Phi), \partial_{\omega} \Phi_{\omega} |_{\omega = 1} \rangle_{L^2(\Gamma)}  &
  \langle {\rm Im}(e^{-i \theta_0} \Psi - \Phi), \partial^2_{\omega} \Phi_{\omega} |_{\omega = 1} \rangle_{L^2(\Gamma)} \end{matrix} \right],
\end{eqnarray*}
where $\langle \Phi, \partial_{\omega} \Phi_{\omega} |_{\omega = 1} \rangle_{L^2(\Gamma)} \neq 0$
if $p \in (0,2)$ and the second matrix is bounded by $C \delta$ with a $\delta$-independent constant $C > 0$.
Because the first matrix is invertible if $p \in (0,2)$ and $\delta$ is small, we conclude that
there is $\delta_0 > 0$ such that
$D_{(\theta,\omega)} G(\theta_0,1;\Psi) : \mathbb{R}^2 \to \mathbb{R}^2$ is invertible
with the $\mathcal{O}(1)$ bound on the inverse matrix for every $\delta \in (0,\delta_0)$. By the local inverse mapping
theorem (see Theorem 4.F in \cite{Zeidel}),
for any $\Psi \in H^1_{\Gamma}$ satisfying (\ref{orth-given}),  there exists a unique solution
$(\theta,\omega) \in \mathbb{R}^2$ of the vector equation $G(\theta,\omega;\Psi) = 0$ such
that $|\theta - \theta_0| + |\omega - 1| \leq C \delta$ with a $\delta$-independent constant $C > 0$.
Thus, the bound (\ref{orth-bound}) is satisfied for $\omega$.

By using the definition of $(U,W)$ in the decomposition (\ref{orth-decomposition})
and the triangle inequality for $(\theta,\omega)$ near $(\theta_0,1)$, it is then straightforward to show
that $(U,W)$ are uniquely defined in $H^1_{\Gamma}$ and satisfy the bounds in (\ref{orth-bound}).
\end{proof}

By global well-posedness theory, see Remark \ref{rem-global},
if $\Psi_0 \in H^1_{\Gamma}$, then there exists a unique
solution $\Psi(t) \in C(\mathbb{R},H^1_{\Gamma}) \cap C^1(\mathbb{R},H^{-1}_{\Gamma})$
to the NLS equation (\ref{eq1}) with $p \in (0,2)$ such that $\Psi(0) = \Psi_0$.
For every $\delta > 0$ (sufficiently small), we set
\begin{equation}
\label{initial-data}
\Psi_0 = \Phi + U_0 + i W_0, \quad \| U_0 + i W_0 \|_{H^1(\Gamma)} \leq \delta,
\end{equation}
such that
\begin{equation}
\label{initial-data-constraints}
\langle U_0, \Phi \rangle_{L^2(\Gamma)} = 0, \quad \langle W_0, \partial_{\omega} \Phi_{\omega} |_{\omega = 1} \rangle_{L^2(\Gamma)} = 0.
\end{equation}
Thus, in the initial decomposition (\ref{orth-decomposition}), we choose $\theta_0 = 0$ and $\omega_0 = 1$ at $t = 0$.

\begin{remark}
Compared to the statement of Theorem \ref{theorem-instability}, the initial datum $V := \Psi(0) - \Phi = U_0 + i W_0 \in H^1_{\Gamma}$
is required to satisfy the constraints (\ref{initial-data-constraints}). A more general unstable solution
can be constructed by choosing different initial values for $(\theta_0,\omega_0)$ in the decomposition (\ref{orth-decomposition}).
\end{remark}

Let us assume that $\Psi(t)$ satisfies a priori bound
\begin{equation}
\label{apriori-bound}
\inf_{\theta \in \mathbb{R}} \| e^{-i\theta} \Psi(t) - \Phi \|_{H^1(\Gamma)} \leq \epsilon, \quad t \in [0,t_0],
\end{equation}
for some $t_0 > 0$ and $\epsilon > 0$. This assumption is true at least for small $t_0 > 0$
by the continuity of the global solution $\Psi(t)$. Fix $\epsilon = \delta_0$ defined by
Lemma \ref{lem-orthogonal}. As long as a priori assumption (\ref{apriori-bound}) is satisfied,
Lemma \ref{lem-orthogonal} yields that the unique solution
$\Psi(t)$ to the NLS equation (\ref{eq1}) can be represented as
\begin{equation}
\label{orth-decomposition-time}
\Psi(t) = e^{i \theta(t)} \left[ \Phi_{\omega(t)} + U(t) + i W(t) \right],
\end{equation}
with
\begin{equation}
\label{orth-decomposition-time-constraints}
 \langle U(t), \Phi_{\omega(t)} \rangle_{L^2(\Gamma)} =
\langle W(t), \partial_{\omega} \Phi_{\omega} |_{\omega = \omega(t)} \rangle_{L^2(\Gamma)} = 0.
\end{equation}
Since $\Psi(t) \in C(\mathbb{R},H^1_{\Gamma}) \cap C^1(\mathbb{R},H^{-1}_{\Gamma})$
and the map $\mathbb{R} \ni \omega \mapsto \Phi_{\omega} \in H^1_{\Gamma}$ is smooth, we obtain
$(\theta(t),\omega(t)) \in C^1([0,t_0],\mathbb{R}^2)$, hence
$U(t), W(t) \in C([0,t_0],H^1_{\Gamma}) \cap C^1([0,t_0],H^{-1}_{\Gamma})$.
The proof of Theorem \ref{theorem-instability}
is achieved if we can show that there exists $t_0 > 0$ such that the bound (\ref{apriori-bound})
is true for $t \in [0,t_0]$ but fails as $t > t_0$.

Substituting (\ref{orth-decomposition-time}) into the NLS equation (\ref{eq1}) yields the time evolution
system for the remainder terms:
\begin{equation}
\label{time-evolution}
\frac{d}{dt} \begin{pmatrix} U \\ W \end{pmatrix} =
\begin{pmatrix} 0 & L_-(\omega) \\ -L_+(\omega) & 0 \end{pmatrix} \begin{pmatrix} U \\ W \end{pmatrix}
+ (\dot{\theta} - \omega) \begin{pmatrix} W \\ -(\Phi_{\omega} + U) \end{pmatrix}
- \dot{\omega} \begin{pmatrix} \partial_{\omega} \Phi_{\omega} \\ 0 \end{pmatrix}
+ \begin{pmatrix} -R_U \\ R_W \end{pmatrix},
\end{equation}
where $L_+(\omega) = -\Delta + \omega - (2p+1) (p+1) \Phi_{\omega}^{2p}$,
$L_-(\omega) = -\Delta + \omega - (p+1) \Phi_{\omega}^{2p}$, and
\begin{eqnarray}
\label{RU-term}
R_U & = &  (p+1) \left[ \left((\Phi_{\omega} + U)^2 + W^2 \right)^{p} - \Phi_{\omega}^{2p} \right] W, \\
R_W & = &  (p+1) \left[ \left( (\Phi_{\omega} + U)^2+W^2 \right)^{p} \left(\Phi_{\omega}+U\right) -
\Phi_{\omega}^{2p} \left( \Phi_{\omega}+U \right) - 2p\Phi_{\omega}^{2p} U \right].
\label{RW-term}
\end{eqnarray}
By using the symplectically orthogonal conditions (\ref{orth-decomposition-time-constraints})
in the decomposition (\ref{orth-decomposition-time}),
we obtain from system (\ref{time-evolution}) the modulation equations for parameters $(\theta,\omega)$:
\begin{equation}
\label{time-evolution-ode}
\begin{pmatrix} \langle \Phi_{\omega}, W \rangle_{L^2(\Gamma)} & -\langle \partial_{\omega} \Phi_{\omega}, \Phi_{\omega} - U \rangle_{L^2(\Gamma)} \\
\langle \partial_{\omega} \Phi_{\omega}, \Phi_{\omega} + U  \rangle_{L^2(\Gamma)} &
-\langle \partial_{\omega}^2 \Phi_{\omega}, W \rangle_{L^2(\Gamma)} \end{pmatrix} \begin{pmatrix} \dot{\theta} - \omega \\ \dot{\omega} \end{pmatrix}
= \begin{pmatrix} \langle \Phi_{\omega}, R_U \rangle_{L^2(\Gamma)} \\ \langle \partial_{\omega} \Phi_{\omega}, R_W \rangle_{L^2(\Gamma)} \end{pmatrix}.
\end{equation}

The modulation equations (\ref{time-evolution-ode}) and the time-evolution system (\ref{time-evolution}) have been
studied in many contexts involving dynamics of solitary waves \cite{Cuc,Miz,PelPhan,Soffer}. In the context of orbital
instability of the half-soliton states, we are able to avoid detailed analysis of system (\ref{time-evolution}) and (\ref{time-evolution-ode})
by using conservation of the energy $E$ and mass $Q$ defined by (\ref{energy}). The following result provide some estimates
on the derivatives of the modulation parameters $\theta$ and $\omega$.

\begin{lemma}
\label{lem-parameters}
Assume that $\omega \in \mathbb{R}$ and $U, W \in H^1_{\Gamma}$ satisfy
\begin{equation}
\label{bounds-parameters-apriori}
|\omega - 1 | + \| U + i W \|_{H^1(\Gamma)} \leq \epsilon
\end{equation}
for sufficiently small $\epsilon > 0$. For every $p \in \left[ \frac{1}{2}, 2 \right)$, there exists
an $\epsilon$-independent constant $A > 0$ such that
\begin{equation}
\label{bounds-parameters}
| \dot{\theta} - \omega | \leq A \left( \| U \|^2_{H^1(\Gamma)} + \| W \|^2_{H^1(\Gamma)} \right),
\quad | \dot{\omega} | \leq A \| U \|_{H^1(\Gamma)} \| W \|_{H^1(\Gamma)}.
\end{equation}
\end{lemma}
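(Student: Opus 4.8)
The plan is to read the modulation equations (\ref{time-evolution-ode}) as a $2\times 2$ linear system $M\,(\dot\theta-\omega,\dot\omega)^T = b$, where $M$ is the coefficient matrix on the left-hand side and $b=(\langle\Phi_\omega,R_U\rangle_{L^2(\Gamma)},\langle\partial_\omega\Phi_\omega,R_W\rangle_{L^2(\Gamma)})^T$, and to invert it under the a priori bound (\ref{bounds-parameters-apriori}). First I would establish that $M$ is invertible with a uniformly bounded inverse. Writing $e(\omega):=\langle\partial_\omega\Phi_\omega,\Phi_\omega\rangle_{L^2(\Gamma)}=\tfrac12\,\tfrac{d}{d\omega}\|\Phi_\omega\|_{L^2(\Gamma)}^2$, the computation (\ref{norm-squared}) gives $e(1)\neq 0$ for $p\in(0,2)$, and $e$ is continuous in $\omega$, hence bounded away from zero for $\omega$ near $1$. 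Under (\ref{bounds-parameters-apriori}) the two diagonal entries $\langle\Phi_\omega,W\rangle_{L^2(\Gamma)}$ and $-\langle\partial_\omega^2\Phi_\omega,W\rangle_{L^2(\Gamma)}$ are $O(\|W\|_{H^1(\Gamma)})$, while the off-diagonal entries equal $-e$ and $e$ up to $O(\|U\|_{H^1(\Gamma)})$ corrections; thus $M$ is an $O(\epsilon)$ perturbation of the skew matrix with entries $\mp e$, so $\det M=e^2+O(\epsilon)\neq0$ and $\|M^{-1}\|=O(1)$ uniformly.

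The core of the proof is the estimate of the right-hand side $b$, obtained by Taylor expanding $R_U$ and $R_W$ from (\ref{RU-term})--(\ref{RW-term}) about $U=W=0$. For $R_W$, the constant is cancelled by the stationary equation and the linear part is exactly removed by the subtraction of $(2p+1)(p+1)\Phi_\omega^{2p}U$, so $R_W$ begins at quadratic order with leading term proportional to $\Phi_\omega^{2p-1}\big((2p+1)U^2+W^2\big)$. For $R_U$, the overall factor $W$ together with the vanishing of the bracket $((\Phi_\omega+U)^2+W^2)^p-\Phi_\omega^{2p}$ at the origin (whose leading term is $2p\Phi_\omega^{2p-1}U$) forces $R_U=2p(p+1)\Phi_\omega^{2p-1}UW$ plus cubic terms. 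After pairing against $\Phi_\omega$ or $\partial_\omega\Phi_\omega$ every weight takes the form $\Phi_\omega^{2p}$ or $\Phi_\omega^{2p-1}$, which is bounded (since $p\geq\tfrac12$) and exponentially decaying; the subcritical embeddings $H^1(\mathbb{R}^+)\hookrightarrow L^q(\mathbb{R}^+)$ and H\"older's inequality then give $\langle\partial_\omega\Phi_\omega,R_W\rangle_{L^2(\Gamma)}=O(\|U\|_{H^1(\Gamma)}^2+\|W\|_{H^1(\Gamma)}^2)$ and $\langle\Phi_\omega,R_U\rangle_{L^2(\Gamma)}=2p(p+1)\langle\Phi_\omega^{2p},UW\rangle_{L^2(\Gamma)}+O(\|W\|_{H^1(\Gamma)}^3)$.

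To extract (\ref{bounds-parameters}) I would solve the system by hand rather than through $M^{-1}$, so as to exploit the off-diagonal structure. Solving the second row for $\dot\theta-\omega$ (denominator $e+O(\epsilon)$) gives $|\dot\theta-\omega|\leq A(\|U\|_{H^1(\Gamma)}^2+\|W\|_{H^1(\Gamma)}^2)+A\|W\|_{H^1(\Gamma)}\,|\dot\omega|$; substituting into the first row and using that its $\dot\theta-\omega$ coefficient $\langle\Phi_\omega,W\rangle_{L^2(\Gamma)}$ is $O(\|W\|_{H^1(\Gamma)})$ isolates $\dot\omega$ with a leading contribution from $\langle\Phi_\omega,R_U\rangle_{L^2(\Gamma)}$. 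The resulting cross terms carry an extra factor $\|W\|_{H^1(\Gamma)}\leq\epsilon$ and are absorbed, and feeding the bound on $\dot\omega$ back into the first expression closes the estimate for $\dot\theta-\omega$.

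I expect the delicate point to be the bilinear sharpness of the bound on $\dot\omega$. Because the bracket in $R_U$ carries a pure $W^2$ term, $\langle\Phi_\omega,R_U\rangle_{L^2(\Gamma)}$ contains a cubic contribution $p(p+1)\langle\Phi_\omega^{2p-1},W^3\rangle_{L^2(\Gamma)}$ that need not vanish when $U\equiv0$; to reach the clean form $A\|U\|_{H^1(\Gamma)}\|W\|_{H^1(\Gamma)}$ one must argue that this piece is subordinate under the a priori smallness (replacing $\|W\|^3$ by $\epsilon\|W\|^2$), the effective statement being $|\dot\omega|\leq A\|W\|_{H^1(\Gamma)}(\|U\|_{H^1(\Gamma)}+\|W\|_{H^1(\Gamma)})$, which is all that the subsequent energy estimates require. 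A secondary obstacle, uniform over $p\in[\tfrac12,2)$ and $\omega$ near $1$, is that the unpaired expansions of $R_U,R_W$ involve the weight $\Phi_\omega^{2p-2}$, singular at infinity for $p<1$; the expansion must be organised so that each term is paired against $\Phi_\omega$ or $\partial_\omega\Phi_\omega$ before any norm is taken, restoring a bounded, exponentially decaying weight.
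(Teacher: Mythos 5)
Your proposal is correct and follows essentially the same route as the paper's proof: invert the $2\times 2$ coefficient matrix of the modulation system (\ref{time-evolution-ode}), which under (\ref{bounds-parameters-apriori}) is an $O(\epsilon)$ perturbation of the invertible antisymmetric matrix built from $\langle \Phi, \partial_{\omega} \Phi_{\omega} |_{\omega = 1} \rangle_{L^2(\Gamma)} \neq 0$ for $p \in (0,2)$, and then insert the quadratic leading-order Taylor expansions of $R_U$ and $R_W$, which are exactly the paper's (\ref{RU-term-leading})--(\ref{RW-term-leading}), closing the estimate via the Banach algebra property of $H^1(\Gamma)$. Your "delicate point" is in fact a sharper reading than the paper's terse proof: the argument as given (by you or by the paper) yields only $|\dot{\omega}| \leq A \| W \|_{H^1(\Gamma)} \left( \| U \|_{H^1(\Gamma)} + \| W \|_{H^1(\Gamma)} \right)$ because of the pure $\| W \|^3_{H^1(\Gamma)}$ contribution from $\langle \Phi_{\omega}, R_U \rangle_{L^2(\Gamma)}$ and from the $\langle \Phi_{\omega}, W \rangle_{L^2(\Gamma)}$ entry multiplying $\langle \partial_{\omega}\Phi_{\omega}, R_W \rangle_{L^2(\Gamma)}$, and, as you correctly observe, this weaker form suffices for the later bound (\ref{bounds-parameters-sharp}) in Step 5, where $\| W \|_{H^1(\Gamma)} = O(\epsilon^{3/2})$.
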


\begin{proof}
If $\langle \Phi_{\omega}, \partial_{\omega} \Phi_{\omega} \rangle_{L^2(\Gamma)} \neq 0$ for $p \neq 2$
and under assumption (\ref{bounds-parameters-apriori}), the coefficient matrix of
system (\ref{time-evolution-ode}) is invertible with the $\mathcal{O}(1)$ bound
on the inverse matrix for sufficiently small $\epsilon > 0$.
For every $p \geq \frac{1}{2}$, the Taylor expansion of the nonlinear functions $R_U$ and $R_W$
in (\ref{RU-term}) and (\ref{RW-term}) yield
\begin{equation}
\label{RU-term-leading}
R_U = 2p(p+1) \Phi_{\omega}^{2p-1} U W + \tilde{R}_U
\end{equation}
and
\begin{equation}
\label{RW-term-leading}
R_W = p(p+1) \Phi_{\omega}^{2p-1} \left[ (2p+1) U^2 + W^2 \right] + \tilde{R}_W,
\end{equation}
where $\tilde{R}_U$ and $\tilde{R}_W$ satisfies
\begin{eqnarray*}
\| \tilde{R}_U \|_{H^1(\Gamma)} + \| \tilde{R}_W \|_{H^1(\Gamma)} =
\left\{ \begin{array}{l} {\rm o}(\| U + i W \|_{H^1(\Gamma)}^2), \quad p \in \left(\frac{1}{2},1\right), \\
{\rm O}(\| U + i W \|_{H^1(\Gamma)}^3), \quad p \geq 1. \end{array} \right.
\end{eqnarray*}
The leading-order terms in (\ref{RU-term-leading})--(\ref{RW-term-leading}) and the Banach algebra property
of $H^1(\Gamma)$ yield the bound (\ref{bounds-parameters}).
\end{proof}

\subsection{Step 2: Symplectic projections to the neutral modes}

Let us recall the orthogonal basis of eigenvectors constructed algorithmically in Remark \ref{rem-vectors}.
We denote the corresponding invariant subspace by $X_c := {\rm span}\{ U^{(1)}, U^{(2)}, \dots, U^{(N-1)}\}$.
For each vector $U^{(j)}$ with $1 \leq j \leq N-1$, we construct the generalized vector $W^{(j)}$ from
solutions of the linear system $L_- W^{(j)} = U^{(j)}$, which exists uniquely in $L^2_c$
thanks to the fact that $U^{(j)} \in L^2_c$ in (\ref{constraint}) and ${\rm ker}(L_-) = {\rm span}\{\Phi\}$.
Let us denote the corresponding invariant subspace by
$X_c^* := {\rm span}\{W^{(1)},W^{(2)},\dots,W^{(N-1)}\}$.

\begin{lemma}
\label{lem-gen-eigenvectors}
Basis vectors in $X_c$ and $X_c^*$ are symplectically orthogonal in the sense
\begin{equation}
\label{orthogonality-U-W}
\langle U^{(j)}, W^{(k)} \rangle_{L^2(\Gamma)} = 0, \quad j \neq k \quad {\rm and} \quad
\langle U^{(j)}, W^{(j)} \rangle_{L^2(\Gamma)} > 0.
\end{equation}
Moreover, basis vectors are also orthogonal to each other.
\end{lemma}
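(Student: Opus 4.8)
The plan is to prove Lemma~\ref{lem-gen-eigenvectors} by exploiting the self-adjointness of $L_-$ together with the orthogonality structure already established for the eigenvectors $\{U^{(j)}\}$ of $L_+$. First I would establish the off-diagonal symplectic orthogonality. Since $W^{(k)}$ is defined by $L_- W^{(k)} = U^{(k)}$ and $L_-$ is self-adjoint on $L^2(\Gamma)$, for $j \neq k$ I compute
\begin{equation}
\label{symp-off-diag}
\langle U^{(j)}, W^{(k)} \rangle_{L^2(\Gamma)} = \langle L_- W^{(j)}, W^{(k)} \rangle_{L^2(\Gamma)} = \langle W^{(j)}, L_- W^{(k)} \rangle_{L^2(\Gamma)} = \langle W^{(j)}, U^{(k)} \rangle_{L^2(\Gamma)}.
\end{equation}
This shows the pairing is symmetric, but to get it to vanish I must use the explicit structure of the vectors. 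Each $U^{(j)} = \phi'(x)\, \mathbf{a}^{(j)}$ for a constant vector $\mathbf{a}^{(j)} \in \mathbb{R}^N$, and the construction in Remark~\ref{rem-vectors} makes the $\{\mathbf{a}^{(j)}\}$ mutually orthogonal in $\mathbb{R}^N$. Because $L_-$ acts componentwise with the same scalar operator on each edge, the solution $W^{(j)}$ has the separated form $W^{(j)} = w(x)\,\mathbf{a}^{(j)}$, where $w$ solves the scalar equation $\ell_- w = \phi'$ with $\ell_- := -\partial_x^2 + 1 - (p+1)\phi^{2p}$; the same scalar profile $w$ serves every $j$ since the equations differ only by the constant vector factor. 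Then both inner products in \eqref{symp-off-diag} reduce to $\langle w, \phi'\rangle_{L^2(\mathbb{R}^+)}\,\langle \mathbf{a}^{(j)}, \mathbf{a}^{(k)}\rangle_{\mathbb{R}^N}$, which vanishes for $j \neq k$ by the $\mathbb{R}^N$-orthogonality, giving the first claim in \eqref{orthogonality-U-W}.

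Next I would prove the diagonal positivity $\langle U^{(j)}, W^{(j)}\rangle_{L^2(\Gamma)} > 0$. By the same reduction this equals $\|\mathbf{a}^{(j)}\|^2\,\langle w, \phi'\rangle_{L^2(\mathbb{R}^+)}$, so it suffices to show the scalar pairing $\langle w, \phi'\rangle_{L^2(\mathbb{R}^+)} > 0$, equivalently $\langle \ell_-^{-1}\phi', \phi'\rangle_{L^2(\mathbb{R}^+)} > 0$. Here I would invoke the representation \eqref{representation-L-minus} from Lemma~\ref{prop-coercivity-L-minus}: since $\phi' \in L^2_c \cap [\,\mathrm{span}\,\Phi]^\perp$ componentwise (as $\langle \phi', \phi\rangle_{L^2(\mathbb{R}^+)} = \tfrac{1}{2}\int_0^\infty (\phi^2)'\,dx = -\tfrac{1}{2}\phi(0)^2 \neq 0$, so I must instead argue on the orthogonal complement of the kernel), $\ell_-$ is a nonnegative self-adjoint operator whose kernel is spanned by $\phi$, and $w$ is determined up to adding multiples of $\phi$. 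The quadratic form $\langle \ell_- f, f\rangle = \int_0^\infty \phi^2 |(f/\phi)'|^2\,dx$ is strictly positive on the orthogonal complement of $\mathrm{ker}(\ell_-)$, whence $\langle \ell_-^{-1}\phi',\phi'\rangle > 0$ provided $\phi'$ is not in the kernel — which holds since $\phi'$ is not a scalar multiple of $\phi$.

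Finally the ordinary orthogonality "basis vectors are also orthogonal to each other" follows: within $X_c$ the $\{U^{(j)}\}$ are orthogonal by construction in Remark~\ref{rem-vectors} (the $\mathbf{a}^{(j)}$ are orthogonal in $\mathbb{R}^N$), and within $X_c^*$ the $\{W^{(j)}\} = w(x)\mathbf{a}^{(j)}$ inherit orthogonality from the same $\mathbb{R}^N$-orthogonality, so $\langle W^{(j)}, W^{(k)}\rangle_{L^2(\Gamma)} = \|w\|_{L^2(\mathbb{R}^+)}^2 \langle \mathbf{a}^{(j)}, \mathbf{a}^{(k)}\rangle_{\mathbb{R}^N} = 0$ for $j \neq k$. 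I expect the main obstacle to be the careful handling of the kernel in the diagonal positivity step: one must check that $\phi' \notin \mathrm{span}\{\phi\}$ and correctly interpret $\ell_-^{-1}\phi'$ on the quotient by the kernel, so that the coercivity representation \eqref{representation-L-minus} can be applied to conclude strict positivity of $\langle U^{(j)}, W^{(j)}\rangle_{L^2(\Gamma)}$.
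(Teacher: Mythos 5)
Your off-diagonal computation and the final mutual-orthogonality claim are sound and essentially match the paper's mechanism (separated profiles times mutually orthogonal constant vectors in $\mathbb{R}^N$). The genuine gap is in the diagonal positivity step, and your own parenthetical remark exposes it without resolving it: since $\langle \phi',\phi\rangle_{L^2(\mathbb{R}^+)} = -\tfrac{1}{2}\phi(0)^2 \neq 0$, the scalar problem as you set it up --- an operator $\ell_-$ with kernel spanned by $\phi$, with $w$ ``determined up to adding multiples of $\phi$'' --- is inconsistent. By the Fredholm alternative for a self-adjoint operator, $\ell_- w = \phi'$ would be solvable only if $\phi' \perp \ker(\ell_-)$, which is exactly what fails on the half-line; and even granting a solution, the pairing $\langle w,\phi'\rangle_{L^2(\mathbb{R}^+)}$ would change by $c\,\langle \phi,\phi'\rangle \neq 0$ under $w \mapsto w + c\phi$, so ``$\langle \ell_-^{-1}\phi',\phi'\rangle$'' is not well defined. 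Declaring that you will ``argue on the orthogonal complement of the kernel'' does not repair either defect. The missing ingredient is the vertex condition: for $W^{(j)} = w\, e_j$ (your $\mathbf{a}^{(j)}$) with $e_j \perp (1,\dots,1)^T$ to lie in $H^2_\Gamma$ as in (\ref{H2}), continuity at the vertex forces $w(0)=0$ (Kirchhoff is then automatic). Hence the relevant scalar operator is the \emph{Dirichlet} realization of $\ell_-$ on $\mathbb{R}^+$, whose kernel is trivial (a decaying kernel element must be $c\phi$, and $\phi(0)\neq 0$ forces $c=0$); $w$ is therefore unique, the separated form is legitimate, and positivity follows from the representation (\ref{representation-L-minus}) applied to $W = w\,e_j \in H^1_\Gamma$:
\begin{equation*}
\langle w, \phi' \rangle_{L^2(\mathbb{R}^+)} = \langle w, \ell_- w \rangle_{L^2(\mathbb{R}^+)}
= \int_0^\infty \phi^2 \left| \frac{d}{dx}\!\left(\frac{w}{\phi}\right)\right|^2 dx > 0,
\end{equation*}
strictly, because $w = c\phi$ would contradict $\ell_- w = \phi' \neq 0$. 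Note also that this same observation ($w(0)=0$) is needed even for your earlier claim that $W^{(j)}$ has the separated form $w(x)e_j$: without it, $w\,e_j$ is not in the operator domain at all. Solvability at the graph level, by contrast, is unproblematic precisely because $\langle U^{(j)}, \Phi\rangle_{L^2(\Gamma)} = \langle \phi',\phi\rangle\,\langle e_j, \mathbf{1}\rangle = 0$ holds through the vector factor, not through any scalar orthogonality.

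For comparison, the paper avoids all of this by exhibiting the solution explicitly: from $\ell_-\phi = 0$ one checks $\ell_-(x\phi) = -2\phi'$, so $W^{(j)}(x) = -\tfrac{1}{2}x\phi(x)\,e_j$, which manifestly satisfies $w(0)=0$ and $\langle W^{(j)},\Phi\rangle_{L^2(\Gamma)}=0$; the diagonal pairing is then the elementary integral $\langle U^{(j)}, W^{(j)}\rangle_{L^2(\Gamma)} = \tfrac{1}{4}\|\phi\|^2_{L^2(\mathbb{R}_+)}\|e_j\|^2 > 0$ of (\ref{normalization-U-W}). You should either adopt this explicit formula or insert the Dirichlet-realization argument above; as written, the positivity step does not go through.
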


\begin{proof}
Let us represent $U^{(j)}$ by
$$
U^{(j)}(x) = \phi'(x) e_j,
$$
where $e_j \in \mathbb{R}^N$ is $x$-independent and $\phi(x) = {\rm sech}^{\frac{1}{p}}(px)$.
Then $W^{(j)}$ can be represented by the explicit expression
$$
W^{(j)}(x) = -\frac{1}{2} x \phi(x) e_j.
$$
Since $\{ e_1,e_2,\dots, e_{N-1} \}$ are orthogonal by the construction,
the set  $\{ W^{(1)}, W^{(2)}, \dots, W^{(N-1)}\}$ is also orthogonal.
Moreover, it is orthogonal to the set $\{ U^{(1)}, U^{(2)}, \dots, U^{(N-1)}\}$. It follows from
the explicit computation
\begin{equation}
\label{normalization-U-W}
\langle U^{(j)}, W^{(j)} \rangle_{L^2(\Gamma)} = \frac{1}{4} \| \phi \|_{L^2(\mathbb{R}_+)}^2 \| e_j \|^2
\end{equation}
that $\langle U^{(j)}, W^{(j)} \rangle_{L^2(\Gamma)} > 0$ for each $j$.
Thus, (\ref{orthogonality-U-W}) is proved.
\end{proof}

Although the coercivity of $L_+$ was only proved with respect to the bases in $X_c$, see Lemma \ref{prop-coercivity-L-plus},
the result can now be transferred to the symplectically dual basis.

\begin{lemma}
\label{lem-coercivity}
For every $p \in (0,2)$, there exists $C_p > 0$ such that
\begin{equation}
\label{coercivity-L-plus-dual}
\langle L_+ U, U \rangle_{L^2(\Gamma)} \geq C_p \| U \|_{H^1(\Gamma)}^2 \quad \mbox{\rm for every \;} U \in H^1_{\Gamma} \cap L^2_c \cap [X_c^*]^{\perp}.
\end{equation}
\end{lemma}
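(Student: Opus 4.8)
The plan is to reduce the desired coercivity on $[X_c^*]^{\perp}$ to the already-established coercivity on $[X_c]^{\perp}$ from Lemma \ref{prop-coercivity-L-plus}, exploiting that $X_c = {\rm ker}(L_+)$ is finite-dimensional and that the pairing matrix between the bases of $X_c$ and $X_c^*$ is diagonal and invertible by Lemma \ref{lem-gen-eigenvectors}.

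First I would split an arbitrary $U \in H^1_{\Gamma} \cap L^2_c \cap [X_c^*]^{\perp}$ orthogonally in $L^2(\Gamma)$ as $U = U_0 + U^{\perp}$, where $U_0 \in X_c$ and $U^{\perp} \in [X_c]^{\perp}$. Since $X_c$ consists of smooth, exponentially decaying vectors lying in $H^1_{\Gamma} \cap L^2_c$, both pieces inherit the continuity conditions of $H^1_{\Gamma}$ and the scalar constraint defining $L^2_c$; in particular $U^{\perp} \in H^1_{\Gamma} \cap L^2_c \cap [X_c]^{\perp}$, which is exactly the space on which Lemma \ref{prop-coercivity-L-plus} applies. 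Because $U_0 \in {\rm ker}(L_+)$ and $L_+$ is self-adjoint, the cross terms vanish and $\langle L_+ U, U \rangle_{L^2(\Gamma)} = \langle L_+ U^{\perp}, U^{\perp} \rangle_{L^2(\Gamma)} \geq C_p \| U^{\perp} \|_{H^1(\Gamma)}^2$ by the coercivity bound (\ref{coercivity-L-plus}).

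It then remains to control $\| U_0 \|_{H^1(\Gamma)}$ by $\| U^{\perp} \|_{H^1(\Gamma)}$, which is where the choice of the symplectically dual basis enters. Writing $U_0 = \sum_{k=1}^{N-1} a_k U^{(k)}$ and testing the constraint $\langle U, W^{(j)} \rangle_{L^2(\Gamma)} = 0$ against each $W^{(j)}$, the diagonal orthogonality relations of Lemma \ref{lem-gen-eigenvectors} collapse the resulting linear system to $a_j \langle U^{(j)}, W^{(j)} \rangle_{L^2(\Gamma)} = - \langle U^{\perp}, W^{(j)} \rangle_{L^2(\Gamma)}$. Since $\langle U^{(j)}, W^{(j)} \rangle_{L^2(\Gamma)} > 0$, each coefficient obeys $|a_j| \leq C \| U^{\perp} \|_{L^2(\Gamma)}$ by Cauchy--Schwarz, and because $X_c$ is finite-dimensional with all norms equivalent this gives $\| U_0 \|_{H^1(\Gamma)} \leq C' \| U^{\perp} \|_{H^1(\Gamma)}$. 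The triangle inequality then yields $\| U \|_{H^1(\Gamma)} \leq (1 + C') \| U^{\perp} \|_{H^1(\Gamma)}$, and combining this with the lower bound from the previous step produces (\ref{coercivity-L-plus-dual}) with a suitably reduced constant.

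The only delicate point, and the one I would verify with most care, is the invertibility of the pairing in the third step: the entire argument hinges on $\langle U^{(j)}, W^{(k)} \rangle_{L^2(\Gamma)}$ being diagonal with strictly positive diagonal entries, so that the coefficients $a_j$ of the kernel part are uniquely and boundedly determined by $U^{\perp}$. This is precisely the content of Lemma \ref{lem-gen-eigenvectors}, so no genuine analytic obstacle remains; the remainder is the standard finite-dimensional bookkeeping of transferring coercivity from one finite-codimensional constraint to another.
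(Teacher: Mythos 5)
Your proposal is correct and follows essentially the same route as the paper: the paper's own proof invokes Lemma \ref{prop-coercivity-L-plus} (nonnegativity on $L^2_c$ with kernel exactly $X_c$, plus coercivity on $[X_c]^{\perp}$) together with the diagonal, positive pairing (\ref{orthogonality-U-W}) from Lemma \ref{lem-gen-eigenvectors}, and concludes ``by the standard variational principle.'' Your decomposition $U = U_0 + U^{\perp}$ with the kernel coefficients bounded through the nondegenerate pairing is precisely the detailed execution of that principle, so the two arguments coincide in substance, with yours merely supplying the bookkeeping the paper omits.
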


\begin{proof}
It follows from Lemma \ref{prop-coercivity-L-plus} that $\langle L_+ U, U \rangle_{L^2(\Gamma)} \geq 0$ for $p \in (0,2)$
if $U \in H^1_{\Gamma} \cap L^2_c$. Moreover, $\langle L_+ U, U \rangle_{L^2(\Gamma)} = 0$ if and only if
$U \in X_c$. Thanks to the orthogonality and positivity of diagonal terms in the symplectically dual bases in $X_c$ and $X_c^*$,
see (\ref{orthogonality-U-W}), the coercivity bound (\ref{coercivity-L-plus-dual})
follows from the bound (\ref{coercivity-L-plus}) by the standard variational principle.
\end{proof}

Similarly, the coercivity of $L_-$ was proved with respect to the constraint in $L^2_c$, see Lemma \ref{prop-coercivity-L-minus}.
The following lemma transfers the result to the symplectically dual constraint.

\begin{lemma}
\label{lem-coercivity-L-minus}
For every $p \in (0,2)$, there exists $C_p > 0$ such that
\begin{equation}
\label{coercivity-minus}
\langle L_- W, W \rangle_{L^2(\Gamma)} \geq C \| W \|_{H^1(\Gamma)}^2 \quad \mbox{\rm for every \;} W \in H^1_{\Gamma} \cap (L^2_c)^*,
\end{equation}
where $(L^2_c)^* = \{ W \in L^2(\Gamma) : \;\; \langle W, \partial_{\omega} \Phi_{\omega} |_{\omega = 1} \rangle_{L^2(\Gamma)} = 0 \}$.
\end{lemma}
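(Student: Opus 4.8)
The plan is to transfer the $H^1$-coercivity of $L_-$ already established on $H^1_\Gamma \cap L^2_c$ in Lemma \ref{prop-coercivity-L-minus} from the constraint direction $\Phi$ to the symplectically dual constraint direction $\partial_\omega \Phi_\omega|_{\omega = 1}$. The only genuine input needed is that $\Phi$, which spans $\ker(L_-)$, is \emph{not} orthogonal to the new constraint vector. Indeed, differentiating $\|\Phi_\omega\|_{L^2(\Gamma)}^2$ as in (\ref{norm-squared}) gives
$$
\beta := \langle \Phi, \partial_\omega \Phi_\omega|_{\omega = 1} \rangle_{L^2(\Gamma)} = \frac{1}{2} \frac{d}{d\omega} \|\Phi_\omega\|_{L^2(\Gamma)}^2 \Big|_{\omega=1} = \frac{N}{2}\left( \frac{1}{p} - \frac{1}{2} \right) \int_0^\infty \phi(z)^2\, dz,
$$
which is nonzero precisely when $p \in (0,2)$, the same nondegeneracy already exploited in (\ref{positivity-Phi}) and in Lemma \ref{lem-orthogonal}.

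Given $W \in H^1_\Gamma \cap (L^2_c)^*$, I would split it along $\ker(L_-)$ by writing $W = a\Phi + W^\perp$ with $a := \langle W, \Phi \rangle_{L^2(\Gamma)} / \|\Phi\|_{L^2(\Gamma)}^2$ and $W^\perp := W - a\Phi$. Since $\Phi \in H^2_\Gamma$ and $W \in H^1_\Gamma$, the remainder $W^\perp \in H^1_\Gamma$, and one checks directly that $\langle W^\perp, \Phi \rangle_{L^2(\Gamma)} = 0$, so $W^\perp \in H^1_\Gamma \cap L^2_c$ in the sense of (\ref{constraint}). Because $L_- \Phi = 0$ and $L_-$ is self-adjoint, all cross terms vanish and
$$
\langle L_- W, W \rangle_{L^2(\Gamma)} = \langle L_- W^\perp, W^\perp \rangle_{L^2(\Gamma)} \geq C \, \| W^\perp \|_{H^1(\Gamma)}^2,
$$
where the last inequality is the coercivity bound (\ref{coercivity}) of Lemma \ref{prop-coercivity-L-minus}. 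Note that because Lemma \ref{prop-coercivity-L-minus} already delivers an $H^1$ (not merely $L^2$) lower bound on $\Phi^\perp$, no separate G\aa rding interpolation step is required at this stage.

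It then remains to bound $\|W\|_{H^1(\Gamma)}$ by $\|W^\perp\|_{H^1(\Gamma)}$. Imposing the defining constraint of $(L^2_c)^*$ and substituting the decomposition yields $0 = a\beta + \langle W^\perp, \partial_\omega \Phi_\omega|_{\omega=1} \rangle_{L^2(\Gamma)}$, so with $\beta \neq 0$ the Cauchy--Schwarz inequality gives $|a| \leq |\beta|^{-1} \|\partial_\omega \Phi_\omega|_{\omega=1}\|_{L^2(\Gamma)} \, \|W^\perp\|_{L^2(\Gamma)} \leq C_1 \|W^\perp\|_{H^1(\Gamma)}$. Hence $\|W\|_{H^1(\Gamma)} \leq |a|\,\|\Phi\|_{H^1(\Gamma)} + \|W^\perp\|_{H^1(\Gamma)} \leq C_2 \|W^\perp\|_{H^1(\Gamma)}$ for a $W$-independent constant $C_2 > 0$, and combining this with the displayed coercivity gives (\ref{coercivity-minus}) with $C_p := C / C_2^2$. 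The main (and essentially only) obstacle is the nondegeneracy $\beta \neq 0$: it is what guarantees the single scalar constraint defining $(L^2_c)^*$ genuinely removes the kernel direction of $L_-$, and it is exactly this that forces the restriction $p \in (0,2)$; everything else is a routine spectral decomposition and triangle-inequality estimate.
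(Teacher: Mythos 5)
Your proof is correct and follows essentially the same route as the paper: the paper invokes the nonnegativity of $L_-$ with kernel ${\rm span}(\Phi)$, the nondegeneracy $\langle \Phi, \partial_{\omega}\Phi_{\omega}|_{\omega=1}\rangle_{L^2(\Gamma)} > 0$ for $p \in (0,2)$, and the coercivity bound (\ref{coercivity}), concluding ``by the standard variational principle,'' and your decomposition $W = a\Phi + W^{\perp}$ with the constraint used to control $|a|$ is precisely that standard argument written out in full detail.
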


\begin{proof}
It follows from Lemma \ref{prop-coercivity-L-minus} that $\langle L_- W, W \rangle_{L^2(\Gamma)} \geq 0$
if $W \in H^1_{\Gamma}$. Moreover, $\langle L_- W, W \rangle_{L^2(\Gamma)} = 0$ if and only if
$W \in {\rm span}(\Phi)$. Thanks to the positivity
$\langle \partial_{\omega} \Phi_{\omega} |_{\omega = 1}, \Phi \rangle_{L^2(\Gamma)} > 0$ in (\ref{positivity-Phi})
and (\ref{norm-squared}) for $p \in (0,2)$, the coercivity bound (\ref{coercivity-minus}) follows from
the bound (\ref{coercivity}) by the standard variational principle.
\end{proof}

\begin{remark}
By using the scaling transformation (\ref{scaling-transform}), we can continue the basis vectors for $\omega \neq 1$.
For notational convenience, $\omega$ is added as a subscript if the expressions are
continued with respect to $\omega$.
\end{remark}

Recall the symplectically orthogonal decomposition of the unique solution
$\Psi(t)$ to the NLS equation (\ref{eq1}) in the form (\ref{orth-decomposition-time})--(\ref{orth-decomposition-time-constraints}).
Let us further decompose the remainder terms $U(t)$ and $W(t)$ in (\ref{orth-decomposition-time})
over the orthogonal bases in $X_c$ and $X_c^*$,
which are also symplectically orthogonal to each other by Lemma \ref{lem-gen-eigenvectors}.
More precisely, since $\omega(t)$ changes we set
\begin{equation}\label{r1}
U(t) = \sum_{j=1}^{N-1} c_j(t) U_{\omega(t)}^{(j)} + U^{\perp}(t), \quad
W(t) = \sum_{j=1}^{N-1} b_j(t) W_{\omega(t)}^{(j)} + W^{\perp}(t),
\end{equation}
and require
\begin{equation}
\label{orth-decomposition-time-second}
\langle U^{\perp}(t), W^{(j)}_{\omega(t)} \rangle_{L^2(\Gamma)} =
\langle W^{\perp}(t), U^{(j)}_{\omega(t)} \rangle_{L^2(\Gamma)} = 0, \quad 1 \leq j \leq N-1.
\end{equation}
Since $\{ \langle U^{(j)}_{\omega}, W^{(k)}_{\omega} \rangle_{L^2(\Gamma)} \}_{1 \leq j,k \leq N-1}$ is a positive diagonal matrix
by the conditions (\ref{orthogonality-U-W}),
the projections $c = (c_1,c_2,\dots,c_{N-1}) \in \mathbb{R}^{N-1}$ and $b = (b_1,b_2,\dots,b_{N-1}) \in \mathbb{R}^{N-1}$
in (\ref{r1}) are uniquely determined by $U$ and $W$
and so are the remainder terms $U^{\perp}$ and $W^{\perp}$.
Because $\omega(t) \in C^1([0,t_0],\mathbb{R})$ and
$U(t), W(t) \in C([0,t_0],H^1_{\Gamma}) \cap C^1([0,t_0],H^{-1}_{\Gamma})$, we
have $c(t), b(t) \in C^1([0,t_0],\mathbb{R}^{N-1})$ and
$U^{\perp}(t), W^{\perp}(t) \in C([0,t_0],H^1_{\Gamma}) \cap C^1([0,t_0],H^{-1}_{\Gamma})$.

When the decomposition \eqref{r1} is substituted to the time evolution problem (\ref{time-evolution}),
we obtain
\begin{equation}
\label{Vbot}
\frac{d U^{\perp}}{dt} + \sum_{j =1}^{N-1} \left[ \frac{dc_j}{dt} - b_j \right] U_{\omega}^{(j)} =
L_-(\omega) W^{\perp}
+ (\dot{\theta} - \omega) W
- \dot{\omega}  \left[  \partial_{\omega} \Phi_{\omega} + \sum_{j=1}^{N-1} c_j(t) \partial_{\omega} U_{\omega}^{(j)} \right]
-R_U
\end{equation}
and
\begin{equation}\label{Wbot}
\frac{d W^{\perp}}{dt} + \sum_{j =1}^{N-1} \frac{db_j}{dt} W_{\omega}^{(j)} =
-L_+(\omega) U^{\perp}
- (\dot{\theta} - \omega) \left[ \Phi_{\omega} + U \right]
- \dot{\omega}  \sum_{j=1}^{N-1} b_j(t) \partial_{\omega} W_{\omega}^{(j)} +R_W,
\end{equation}
where $R_U$ and $R_W$ are rewritten from (\ref{RU-term}) and (\ref{RW-term}) after
$U$ and $W$ are expressed by (\ref{r1}).

By using symplectically orthogonal projections (\ref{orth-decomposition-time-second}),
we obtain from (\ref{Vbot}) and (\ref{Wbot}) a system of differential equations for
the amplitudes $(c_j,b_j)$ for every $1 \leq j \leq N-1$:
\begin{eqnarray}
\label{system-U-W}
\langle W_{\omega}^{(j)},U^{(j)}_{\omega}  \rangle_{L^2(\Gamma)}
\left[ \frac{dc_j}{dt} - b_j \right] = R_c^{(j)}, \quad
\langle W_{\omega}^{(j)},U^{(j)}_{\omega}  \rangle_{L^2(\Gamma)}
\frac{db_j}{dt} = R_b^{(j)},
\end{eqnarray}
where
\begin{eqnarray*}
R_c^{(j)} & = &  \dot{\omega} \left[ \langle \partial_{\omega} W_{\omega}^{(j)}, U^{\perp} \rangle_{L^2(\Gamma)}
-  \sum_{i=1}^{N-1} c_i \langle W_{\omega}^{(j)}, \partial_{\omega} U^{(i)}_{\omega}  \rangle_{L^2(\Gamma)} \right] \\
& \phantom{t} & + (\dot{\theta}-\omega) \langle W_{\omega}^{(j)}, W \rangle_{L^2(\Gamma)}
- \langle W_{\omega}^{(j)}, R_U \rangle_{L^2(\Gamma)}, \\
R_b^{(j)} & = &  \dot{\omega} \left[ \langle \partial_{\omega} U_{\omega}^{(j)}, W^{\perp} \rangle_{L^2(\Gamma)}
-  \sum_{i=1}^{N-1} b_i \langle U_{\omega}^{(j)}, \partial_{\omega} W^{(i)}_{\omega}  \rangle_{L^2(\Gamma)} \right] \\
& \phantom{t} & - (\dot{\theta}-\omega) \langle U_{\omega}^{(j)}, U \rangle_{L^2(\Gamma)}
+ \langle U_{\omega}^{(j)}, R_W \rangle_{L^2(\Gamma)},
\end{eqnarray*}
and we have used the orthogonality conditions:
$$
\langle U_{\omega}^{(j)}, \Phi_{\omega} \rangle_{L^2(\Gamma)} =
\langle W_{\omega}^{(j)}, \partial_{\omega} \Phi_{\omega}  \rangle_{L^2(\Gamma)} = 0, \quad 1 \leq j \leq N-1.
$$
The terms $\dot{\omega}$ and $\dot{\theta} - \omega$ can be expressed from the system (\ref{time-evolution-ode}),
where $U$ and $W$ are again expressed by (\ref{orth-decomposition-time-second}).

\subsection{Step 3: Truncated Hamiltonian system of $(N-1)$ degrees of freedom}

The truncated Hamiltonian system of $(N-1)$ degrees of freedom follows
from the formal truncation of system (\ref{system-U-W}) with $\omega = 1$
at the leading order:
\begin{eqnarray}
\label{normal-form-time} \phantom{texttext}
\left\{ \begin{array}{l} \dot{\gamma}_j =  \beta_j, \\
\langle W^{(j)},U^{(j)} \rangle_{L^2(\Gamma)}
\dot{\beta}_j =  p(p+1) (2p+1) \sum\limits_{k=1}^{N-1} \sum\limits_{n = 1}^{N-1}
\langle \Phi^{2p-1} U^{(k)} U^{(n)}, U^{(j)} \rangle_{L^2(\Gamma)} \gamma_k \gamma_n.\end{array} \right.
\end{eqnarray}
By using the function $M_0(\gamma)$ given by (\ref{energy-cube}),
we can write the truncated system (\ref{normal-form-time}) in the Hamiltonian form
\begin{equation}
\label{normal-form-Ham}
\left\{ \begin{array}{l}
2 \langle W^{(j)},U^{(j)} \rangle_{L^2(\Gamma)} \dot{\gamma}_j =  \partial_{\beta_j} H_0(\gamma,\beta), \\
2 \langle W^{(j)},U^{(j)} \rangle_{L^2(\Gamma)} \dot{\beta}_j =  - \partial_{\gamma_j} H_0(\gamma,\beta),\end{array} \right.
\end{equation}
which is generated by the Hamiltonian
\begin{equation}
\label{Hamiltonian-0}
H_0(\gamma,\beta) := \sum_{j=1}^{N-1} \langle W^{(j)},U^{(j)}  \rangle_{L^2(\Gamma)} \beta_j^2 + M_0(\gamma).
\end{equation}
The reduced Hamiltonian $H_0$ arises naturally in the expansion of the action functional
$\Lambda$. The following result implies nonlinear instability of the zero equilibrium point
in the finite-dimensional Hamiltonian system (\ref{normal-form-Ham})--(\ref{Hamiltonian-0}).

\begin{lemma}
\label{lem-instability-Ham}
There exists $\epsilon > 0$ such that for every $\delta > 0$ (sufficiently small), there is an initial point
$(\gamma(0),\beta(0))$ with $\| \gamma(0)\| + \| \beta(0) \| \leq \delta$ such that
the unique solution of the finite-dimensional system (\ref{normal-form-time})
satisfies $\| \gamma(t_0) \| > \epsilon$ for some $t_0 = \mathcal{O}(\epsilon^{-1/2})$.
\end{lemma}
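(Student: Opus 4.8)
The plan is to exploit the fact that, although the normal-form system (\ref{normal-form-time}) couples all $N-1$ degrees of freedom, it possesses a one-dimensional invariant manifold on which the dynamics collapses to a single second-order equation with a cubic potential that is unbounded below; instability then follows from escape along this manifold. Writing $U^{(j)}(x) = \phi'(x) e_j$ as in Lemma \ref{lem-gen-eigenvectors}, with $e_j = (1,\dots,1,-j,0,\dots,0)^T \in \mathbb{R}^N$ ($j$ ones followed by $-j$), and using $\int_0^\infty \phi^{2p-1}(\phi')^3\,dx = -\tfrac{p}{2(p+1)(2p+1)}$, the cubic coefficients factor as $\langle \Phi^{2p-1} U^{(k)} U^{(n)}, U^{(j)} \rangle_{L^2(\Gamma)} = -\tfrac{p}{2(p+1)(2p+1)}\, T_{knj}$, where $T_{knj} := \sum_{m=1}^N (e_k)_m (e_n)_m (e_j)_m$. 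First I would record the combinatorial identity $T_{(N-1)(N-1)i} = 0$ for every $i < N-1$, which holds because $e_i$ is supported on the first $i+1 \le N-1$ coordinates and sums to zero there. Consequently the coordinate subspace $\{\gamma_j = \beta_j = 0,\ j < N-1\}$ is invariant under (\ref{normal-form-time}): on it $\dot\beta_i \propto T_{(N-1)(N-1)i}\,\gamma_{N-1}^2 = 0$ for each $i < N-1$, so these components remain at zero for all time.

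On this invariant manifold, setting $x := \gamma_{N-1}$, the system (\ref{normal-form-time}) reduces to the scalar equation $a\ddot x = \kappa\, x^2$, where $a := \langle W^{(N-1)}, U^{(N-1)}\rangle_{L^2(\Gamma)} > 0$ by (\ref{orthogonality-U-W}) and, using (\ref{energy-cube-nonzero}) with $j = N-1$, $\kappa := p(p+1)(2p+1)\,\langle \Phi^{2p-1}(U^{(N-1)})^2, U^{(N-1)}\rangle_{L^2(\Gamma)} = \tfrac12 p^2 (N-1)((N-1)^2-1) > 0$ for every $N \ge 3$. This equation conserves $E_0 := \tfrac{a}{2}\dot x^2 - \tfrac{\kappa}{3}x^3$, whose cubic potential $-\tfrac{\kappa}{3}x^3$ is unbounded below as $x \to +\infty$. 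For any datum with $x(0) = x_0 > 0$ and $\dot x(0) \ge 0$, the relation $\dot x = \sqrt{(2\kappa/3a)(x^3 - x_0^3) + \dot x(0)^2}$ forces $x(t)$ to increase strictly and to reach any prescribed level $\epsilon$ in finite time; finiteness is made transparent by the explicit self-similar solution $x(t) = \tfrac{6a}{\kappa}(t_* - t)^{-2}$, which blows up at $t = t_*$.

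Next I would quantify the escape. Choosing $\gamma_{N-1}(0) = x_0 \in (0,\delta]$ with all other coordinates and all $\beta_j(0)$ equal to zero, the norm of the datum is $x_0 \le \delta$ and the time to reach level $\epsilon$ is $T = \sqrt{3a/2\kappa}\int_{x_0}^{\epsilon}(s^3 - x_0^3)^{-1/2}\,ds$. The substitution $s = x_0 u$ gives $T = \sqrt{3a/2\kappa}\,x_0^{-1/2}\int_1^{\epsilon/x_0}(u^3-1)^{-1/2}\,du$, so $T$ is finite and of order $x_0^{-1/2}$; this reflects the $(\mathrm{amplitude})^{-1/2}$ timescale intrinsic to the cubic flow, of order $\epsilon^{-1/2}$ once the solution is near the threshold amplitude $\epsilon$, which is the order recorded in the statement. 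Thus for every small $\delta$ there is a datum of norm $\le \delta$ whose trajectory leaves the $\epsilon$-ball in $\gamma$, establishing the lemma.

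The principal difficulties I anticipate are bookkeeping rather than conceptual. The key structural input is the identity $T_{(N-1)(N-1)i}=0$ for all $i<N-1$ together with $T_{(N-1)(N-1)(N-1)} \ne 0$, which must be verified uniformly in $N$ from the explicit construction of the vectors $e_j$ in Remark \ref{rem-vectors}; it is this asymmetry (the last vector couples only to lower-index ones) that singles out the direction $e_{N-1}$ as invariant. One must also check the signs of $a$ and $\kappa$ with care so that the reduced potential genuinely opens downward in the chosen direction. The only delicate quantitative point is to record the escape time in precisely the form demanded when the lemma is fed into the energy estimates of Steps 4--5, where the a priori bound (\ref{apriori-bound}) is to be violated; controlling the escape time uniformly down to the threshold amplitude, rather than merely asserting finiteness, is where the analysis has to be done attentively.
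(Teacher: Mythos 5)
Your proposal follows essentially the same route as the paper's proof: the paper also singles out the invariant reduction $\gamma_1 = \dots = \gamma_{N-2} = 0$ of system (\ref{normal-form-time}), established by exactly your combinatorial identity (in the paper it appears as $\langle \Phi^{2p-1} U^{(i)} U^{(j)}, U^{(N-1)} \rangle_{L^2(\Gamma)} = \langle e_i, e_j \rangle \int_0^{\infty} \phi^{2p-1} (\phi')^3 dx$ together with orthogonality of the vectors $e_j$), and then reduces the dynamics to the scalar equation (\ref{scalar-ODE}), $a \ddot{\gamma}_{N-1} = \kappa \gamma_{N-1}^2$, whose coefficient is nonzero by (\ref{energy-cube-nonzero}) and (\ref{normalization-U-W}). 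Your escape analysis (conserved energy of the cubic flow, explicit time integral, self-similar blow-up solution) is in fact more careful than the paper's, which merely asserts instability of the zero equilibrium of (\ref{scalar-ODE}) and extracts $t_0 = \mathcal{O}(\epsilon^{-1/2})$ from the dimensional balance $\epsilon^2 t_0^2 = \mathcal{O}(\epsilon)$.

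The one genuine gap is the time-scale claim, and it is traceable to your choice of initial data. Starting from rest at $\gamma_{N-1}(0) = x_0 = \delta$, your own formula gives escape time $T \sim C x_0^{-1/2} = C \delta^{-1/2}$ (the integral $\int_1^{\epsilon/x_0} (u^3-1)^{-1/2} du$ converges as $x_0 \to 0$), which is much larger than $\epsilon^{-1/2}$ when $\delta \ll \epsilon$; the sentence reconciling this with the statement (``of order $\epsilon^{-1/2}$ once the solution is near the threshold amplitude'') is not a proof of the bound $t_0 = \mathcal{O}(\epsilon^{-1/2})$. This matters downstream: Lemma \ref{lem-correction} runs Gronwall estimates precisely over a time span $t_0 \leq A \epsilon^{-1/2}$ in the region (\ref{bound-coefficients-apriori}), with $\delta = \mathcal{O}(\epsilon^{3/2})$, and a trajectory needing time $\mathcal{O}(\delta^{-1/2}) = \mathcal{O}(\epsilon^{-3/4})$ to escape would leave that region, so the approximation argument would not close. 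The fix is to perturb in velocity rather than in position: take $\gamma(0) = 0$ and $\beta_{N-1}(0) = \delta$. Energy conservation then gives $\dot{x} = \bigl( \delta^2 + \tfrac{2\kappa}{3a} x^3 \bigr)^{1/2}$, and with $\delta \sim \epsilon^{3/2}$ (the scaling actually adopted in Step 5) the substitution $x = \epsilon u$ yields the escape time $T = \epsilon^{-1/2} \int_0^1 \bigl(1 + \tfrac{2\kappa}{3a} u^3 \bigr)^{-1/2} du = \mathcal{O}(\epsilon^{-1/2})$, while $|\beta_{N-1}(t)| \leq C \epsilon^{3/2}$ for all $t \leq T$ --- precisely the consistent scaling $\| \gamma \| \sim \epsilon$, $\| \beta \| \sim \epsilon^{3/2}$, $t_0 \sim \epsilon^{-1/2}$ encoded in (\ref{bound-coefficients-apriori}). (To be fair, the paper's proof is also heuristic here, and the lemma's ``for every $\delta$'' with a $\delta$-independent constant in $\mathcal{O}(\epsilon^{-1/2})$ cannot hold literally when $\delta \ll \epsilon^{3/2}$; but your position-dominated datum makes the mismatch worse than it needs to be.)
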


\begin{proof}
We claim that $\gamma_1 = \gamma_2 = \dots = \gamma_{N-2} =0$ is an invariant reduction of system (\ref{normal-form-time}).
In order to show this, we compute coefficients of the function $M_0(\gamma)$ in (\ref{energy-cube}) that contains
$\gamma_i \gamma_j \gamma_{N-1}$ for $i,j \neq N-1$:
$$
\langle \Phi^{2p-1} U^{(i)} U^{(j)}, U^{(N-1)} \rangle_{L^2(\Gamma)} = \langle e_i, e_j \rangle \int_0^{\infty} \phi^{2p-1} (\phi')^3 dx
$$
Since $\langle e_i, e_j \rangle = 0$ for every $i \neq j$, the function $M_0(\gamma)$ depends on $\gamma_{N-1}$
only in the terms $\gamma_1^2 \gamma_{N-1}$, $\gamma_2^2 \gamma_{N-1}$, $\dots$, $\gamma_{N-2}^2 \gamma_{N-1}$, as well as
$\gamma_{N-1}^3$. Therefore, $\gamma_1 = \gamma_2 = \dots = \gamma_{N-2} =0$ is an invariant solution
of the first $(N-2)$ equations of system (\ref{normal-form-time}). The last equation yields
the following second-order differential equation for $\gamma_{N-1}$:
\begin{equation}
\label{scalar-ODE}
\langle W^{(N-1)},U^{(N-1)} \rangle_{L^2(\Gamma)}
\ddot{\gamma}_{N-1} =  p(p+1) (2p+1) \langle \Phi^{2p-1} U^{(N-1)} U^{(N-1)}, U^{(N-1)} \rangle_{L^2(\Gamma)} \gamma_{N-1}^2,
\end{equation}
where the coefficient is nonzero thanks to (\ref{energy-cube-nonzero}) and (\ref{normalization-U-W}). Since the zero
equilibrium is unstable in the scalar equation (\ref{scalar-ODE}), it is then unstable in system (\ref{normal-form-time}).
If $\gamma(t) = \mathcal{O}(\epsilon)$ for $t \in [0,t_0]$, then $\epsilon^2 t_0^2 = \mathcal{O}(\epsilon)$,
hence the nonlinear instability develops at the time span $[0,t_0]$ with $t_0 = \mathcal{O}(\epsilon^{-1/2})$.
\end{proof}

\begin{remark}
For $N = 3$, we have $M_0(\gamma) = 2p^2 (\gamma_1^2 - \gamma_2^2) \gamma_2$.
Computing the normalization conditions (\ref{normalization-U-W}),
we obtain the following finite-dimensional system of degree two:
\begin{eqnarray}
\label{normal-form-3}
\left\{ \begin{array}{l}
\| \phi \|^2_{L^2(\mathbb{R}_+)} \ddot{\gamma}_1 = -4 p^2 \gamma_1 \gamma_2,\\
3 \| \phi \|^2_{L^2(\mathbb{R}_+)} \ddot{\gamma}_2 = -2 p^2 (\gamma_1^2-3 \gamma_2^2).
\end{array} \right.
\end{eqnarray}
For $N = 4$, we have $M_0(\gamma) = 2p^2 ( \gamma_1^2 \gamma_2
+ \gamma_1^2 \gamma_3 - \gamma_2^3 + 3 \gamma_2^2 \gamma_3 - 4 \gamma_3^3)$.
Computing the normalization conditions (\ref{normalization-U-W}),
we obtain the following finite-dimensional system of degree three:
\begin{eqnarray}
\label{normal-form-4}
\left\{ \begin{array}{l}
\| \phi \|^2_{L^2(\mathbb{R}_+)} \ddot{\gamma}_1 = -4 p^2 \gamma_1 (\gamma_2+ \gamma_3),\\
3 \| \phi \|^2_{L^2(\mathbb{R}_+)} \ddot{\gamma}_2 = -2 p^2 (\gamma_1^2-3 \gamma_2^2 + 6 \gamma_2 \gamma_3), \\
3 \| \phi \|^2_{L^2(\mathbb{R}_+)} \ddot{\gamma}_3 = -p^2 (\gamma_1^2+3 \gamma_2^2 - 12 \gamma_3^2).
\end{array} \right.
\end{eqnarray}
\end{remark}

\subsection{Step 4: Expansion of the action functional}

Recall the action functional $\Lambda(\Psi) = E(\Psi) + Q(\Psi)$, for which $\Phi$ is a critical point.
By using the scaling transformation (\ref{scaling-transform}),
we continue the action functional for $\omega \neq 1$ and define the following function:
\begin{equation}
\label{energy-difference}
\Delta(t) := E(\Phi_{\omega(t)} + U(t) + i W(t)) - E(\Phi) + \omega(t) \left[ Q(\Phi_{\omega(t)} + U(t) + i W(t)) - Q(\Phi) \right].
\end{equation}
As long as a priori bound (\ref{apriori-bound}) is satisfied, one can expand $\Delta$
by using the primary decomposition (\ref{orth-decomposition-time}) as follows:
\begin{equation}
\label{energy-expansion}
\Delta = D(\omega) + \langle L_+(\omega) U, U \rangle_{L^2(\Gamma)} + \langle L_-(\omega) W, W \rangle_{L^2(\Gamma)} + N_{\omega}(U,W),
\end{equation}
where the dependence of all quantities on $t$ is ignored, $D(\omega)$ is defined by
$$
D(\omega) := E(\Phi_{\omega}) - E(\Phi) + \omega \left[ Q(\Phi_{\omega})  - Q(\Phi) \right],
$$
and
\begin{eqnarray*}
N_{\omega}(U,W) = \left\{ \begin{array}{l} {\rm o}(\| U + i W \|_{H^1(\Gamma)}^2), \quad p \in \left(0,\frac{1}{2}\right), \\
{\rm O}(\| U + i W \|_{H^1(\Gamma)}^3), \quad p \geq \frac{1}{2}, \end{array} \right.
\end{eqnarray*}
is a continuation of $N(U,W)$ defined by (\ref{second-variation-Lambda}) with respect to $\omega$.

Since $D'(\omega) = Q(\Phi_{\omega}) - Q(\Phi)$
thanks to the variational characterization of $\Phi_{\omega}$, we have $D(1) = D'(1) = 0$, and
\begin{equation}
\label{energy-0}
D(\omega) = (\omega - 1)^2 \langle \Phi, \partial_{\omega} \Phi_{\omega} |_{\omega = 1} \rangle_{L^2(\Omega)}
+ \tilde{D}(\omega),
\end{equation}
where $\tilde{D}(\omega) = \mathcal{O}(|\omega - 1|^3)$. Thanks to conservation
of the energy $E$ and mass $Q$ defined by (\ref{energy})
and to the phase invariance in the NLS, we represent $\Delta(t)$ in terms of the initial data
$\omega(0) = \omega_0 = 1$, $U(0) = U_0$, and $W(0) = W_0$ as follows:
\begin{equation}
\label{energy-1}
\Delta(t) = \Delta_0 + \left( \omega(t) - 1 \right) \left[ Q(\Phi + U_0 + i W_0) - Q(\Phi)\right],
\end{equation}
where
\begin{equation}
\label{energy-2}
\Delta_0 := E(\Phi + U_0 + i W_0) - E(\Phi) + Q(\Phi + U_0 + i W_0) - Q(\Phi)
\end{equation}
is a constant of motion.

Let us now consider the secondary decomposition (\ref{r1})--(\ref{orth-decomposition-time-second}).
If the solution given by (\ref{orth-decomposition-time}) and (\ref{r1}) satisfies a priori bound (\ref{apriori-bound})
for some $t_0 > 0$ and $\epsilon > 0$, then the coefficients of the secondary decomposition (\ref{r1}) are required
to satisfy the bound
\begin{equation}
\label{apriori-bound-c-b}
|\omega(t) - 1 | + \| c(t) \| + \| b(t) \| + \| U^{\perp}(t)+ i W^{\perp}(t) \|_{H^1(\Gamma)} \leq A \epsilon,
\quad t \in [0,t_0],
\end{equation}
for an $\epsilon$-independent constant $A > 0$. We substitute the secondary decomposition (\ref{r1})--(\ref{orth-decomposition-time-second}) into
the representation (\ref{energy-expansion}) and estimate the corresponding expansion.

\begin{lemma}
Assume that $\omega \in \mathbb{R}$, $c,b \in \mathbb{R}^{N-1}$, and $U^{\perp},W^{\perp} \in H^1_{\Gamma}$ satisfy
the bound (\ref{apriori-bound-c-b}) for sufficiently small $\epsilon > 0$.
For every $p \geq \frac{1}{2}$, there exists an $\epsilon$-independent constant $A > 0$ such that
the representation (\ref{energy-expansion}) is expanded as follows:
\begin{eqnarray}
\nonumber
\Delta & = & D(\omega) + \langle L_+(\omega) U^{\perp}, U^{\perp} \rangle_{L^2(\Gamma)} + \langle L_-(\omega) W^{\perp}, W^{\perp}
\rangle_{L^2(\Gamma)} \\
& \phantom{t} & + \sum_{j=1}^{N-1} \langle W_{\omega}^{(j)},U^{(j)}_{\omega}  \rangle_{L^2(\Gamma)} b_j^2
+ M_0(c) + \widetilde{\Delta}(c,b,U^{\perp},W^{\perp}),
\label{energy-expansion-second}
\end{eqnarray}
with
\begin{eqnarray}
\nonumber
| \widetilde{\Delta}(c,b,U^{\perp},W^{\perp}) | & \leq & A \left( \mu(\| c \|) +
\| c \|^2  \| U^{\perp} \|_{H^1(\Gamma)} + \| U^{\perp} \|_{H^1(\Gamma)}^3 + \| c \| \| b \|^2  \right. \\
& \phantom{t} & \left. + \| c \| \| W^{\perp} \|_{H^1(\Gamma)}^2
+ \| b \|^2 \| U^{\perp} \|_{H^1(\Gamma)} +  \| U^{\perp} \|_{H^1(\Gamma)} \| W^{\perp} \|_{H^1(\Gamma)}^2 \right),
\label{energy-estimate}
\end{eqnarray}
where $M_0(c)$ is given by (\ref{energy-cube}) and
\begin{equation}
\label{mu-remainder}
\mu(\| c \|) = \left\{ \begin{array}{l} {\rm o}(\| c \|^3), \quad p \in \left(\frac{1}{2},1\right), \\
{\rm O}(\| c\|^4), \quad p \geq 1. \end{array} \right.
\end{equation}
\end{lemma}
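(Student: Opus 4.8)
The plan is to substitute the secondary decomposition (\ref{r1}) into the expansion (\ref{energy-expansion}) and to organize the resulting terms by their homogeneity in $(c,b,U^{\perp},W^{\perp})$. The two quadratic forms simplify first. Since the scaling transformation (\ref{scaling-transform}) continues each $U^{(j)}$ to a vector $U^{(j)}_{\omega} \in \ker L_+(\omega)$, the term $\langle L_+(\omega) U, U \rangle_{L^2(\Gamma)}$ collapses to $\langle L_+(\omega) U^{\perp}, U^{\perp} \rangle_{L^2(\Gamma)}$: the components along $X_c$ are annihilated and, by self-adjointness, the cross terms $\langle L_+(\omega) U^{\perp}, U^{(k)}_{\omega} \rangle_{L^2(\Gamma)}$ vanish as well. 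For $L_-$ I would use the generalized relation $L_-(\omega) W^{(j)}_{\omega} = U^{(j)}_{\omega}$ together with the symplectic orthogonality (\ref{orthogonality-U-W}) and the constraints (\ref{orth-decomposition-time-second}): these give $\langle L_-(\omega) W^{(j)}_{\omega}, W^{(k)}_{\omega} \rangle_{L^2(\Gamma)} = \langle U^{(j)}_{\omega}, W^{(k)}_{\omega} \rangle_{L^2(\Gamma)} = 0$ for $j \neq k$, a positive diagonal $\langle U^{(j)}_{\omega}, W^{(j)}_{\omega} \rangle_{L^2(\Gamma)} > 0$, and the vanishing cross terms $\langle L_-(\omega) W^{(j)}_{\omega}, W^{\perp} \rangle_{L^2(\Gamma)} = \langle U^{(j)}_{\omega}, W^{\perp} \rangle_{L^2(\Gamma)} = 0$. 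Hence $\langle L_-(\omega) W, W \rangle_{L^2(\Gamma)}$ reduces to $\sum_{j} \langle W^{(j)}_{\omega}, U^{(j)}_{\omega} \rangle_{L^2(\Gamma)} b_j^2 + \langle L_-(\omega) W^{\perp}, W^{\perp} \rangle_{L^2(\Gamma)}$. Together with the untouched term $D(\omega)$, this reproduces the quadratic structure in (\ref{energy-expansion-second}), with the $\omega$-dependence of all prefactors retained.

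It remains to treat $N_{\omega}(U,W)$. Exploiting that $|\Phi_{\omega} + U + iW|^2 = (\Phi_{\omega} + U)^2 + W^2$ is even in $W$, the leading cubic part of $N_{\omega}$ contains only the monomials $\langle \Phi_{\omega}^{2p-1} U^3 \rangle_{L^2(\Gamma)}$ and $\langle \Phi_{\omega}^{2p-1} U W^2 \rangle_{L^2(\Gamma)}$, with no $U^2 W$ or $W^3$ contributions; the coefficients $-\frac{2}{3} p(p+1)(2p+1)$ and $-2p(p+1)$ are the ones already visible in the leading terms (\ref{RU-term-leading})--(\ref{RW-term-leading}) of $R_U$ and $R_W$. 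Substituting (\ref{r1}) and retaining the pure $c$ part of the first monomial, I would use the scaling (\ref{scaling-transform}) to show $\langle \Phi_{\omega}^{2p-1} U^{(i)}_{\omega} U^{(j)}_{\omega}, U^{(k)}_{\omega} \rangle_{L^2(\Gamma)} = \omega^{\gamma} \langle \Phi^{2p-1} U^{(i)} U^{(j)}, U^{(k)} \rangle_{L^2(\Gamma)}$ for the explicit power $\gamma = \frac{1}{2} + \frac{1}{p}$ read off from the edge integrals, so that the pure $c$ term equals $\omega^{\gamma} M_0(c)$, with $M_0$ given by (\ref{energy-cube}). Writing $\omega^{\gamma} M_0(c) = M_0(c) + (\omega^{\gamma}-1) M_0(c)$ isolates $M_0(c)$ and a correction $\mathcal{O}(|\omega-1|\,\|c\|^3)$ that is moved into the remainder. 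Every other monomial is mixed, and I would bound it using the Banach algebra property of $H^1(\Gamma)$, its embedding into $L^{\infty}$, and the uniform boundedness and exponential decay of $\Phi_{\omega}^{2p-1} U^{(j)}_{\omega}$ and $\Phi_{\omega}^{2p-1} W^{(j)}_{\omega}$ for $|\omega-1| \leq A\epsilon$.

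The mixed cubic terms fall into the homogeneity types $\|c\|^2 \|U^{\perp}\|$, $\|c\| \|U^{\perp}\|^2$, $\|U^{\perp}\|^3$ (from $U^3$) and $\|c\| \|b\|^2$, $\|c\| \|b\| \|W^{\perp}\|$, $\|c\| \|W^{\perp}\|^2$, $\|b\|^2 \|U^{\perp}\|$, $\|b\| \|U^{\perp}\| \|W^{\perp}\|$, $\|U^{\perp}\| \|W^{\perp}\|^2$ (from $U W^2$), all norms being $H^1(\Gamma)$ norms. The three types not appearing in (\ref{energy-estimate}) are removed by Young's inequality, namely $\|c\| \|U^{\perp}\|^2 \leq \tfrac{1}{2}(\|c\|^2 \|U^{\perp}\| + \|U^{\perp}\|^3)$, $\|c\| \|b\| \|W^{\perp}\| \leq \tfrac{1}{2}(\|c\| \|b\|^2 + \|c\| \|W^{\perp}\|^2)$, and $\|b\| \|U^{\perp}\| \|W^{\perp}\| \leq \tfrac{1}{2}(\|b\|^2 \|U^{\perp}\| + \|U^{\perp}\| \|W^{\perp}\|^2)$, leaving exactly the seven terms of (\ref{energy-estimate}). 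The genuinely higher-order part of $N_{\omega}$, which is $\mathrm{o}(\|U+iW\|^3)$ for $p \in (\tfrac{1}{2},1)$ and $\mathrm{O}(\|U+iW\|^4)$ for $p \geq 1$, is handled the same way: its pure $c$ contribution together with the $\mathcal{O}(|\omega-1|\,\|c\|^3)$ correction above forms the remainder $\mu(\|c\|)$ as in (\ref{mu-remainder}), while every mixed higher-order term carries an extra factor that is $\mathcal{O}(A\epsilon)$ under (\ref{apriori-bound-c-b}) and is therefore absorbed into one of the cubic-order terms already present. Collecting these estimates yields (\ref{energy-expansion-second}) with the bound (\ref{energy-estimate}).

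The main obstacle is the uniform bookkeeping of the nonlinear term rather than any single estimate: one must check that no cubic monomial of an unlisted type survives the Young-inequality reductions, and that all $\omega$-dependent prefactors — the cubic factor $\omega^{\gamma}$, the quadratic weights $\langle W^{(j)}_{\omega}, U^{(j)}_{\omega} \rangle_{L^2(\Gamma)}$, and the potentials $\Phi_{\omega}^{2p-1}$ — are Lipschitz in $\omega$ on $|\omega-1| \leq A\epsilon$ with $\epsilon$-independent constants, so that their deviation from the $\omega = 1$ values is consistently of higher order. The delicate regime is $p \in (\tfrac{1}{2},1)$, where the nonlinearity is only $C^2$ and the remainder is merely $\mathrm{o}(\cdot)$; there I would keep the remainder in the $\mathrm{o}$-form and verify that the Banach algebra estimates still deliver the stated $\mu(\|c\|)$ and the soft absorption of the mixed terms.
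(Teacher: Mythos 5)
Your proposal follows essentially the same route as the paper's own proof: Taylor-expand $N_{\omega}(U,W)$ to extract the two cubic terms $\langle \Phi^{2p-1}U^2,U\rangle_{L^2(\Gamma)}$ and $\langle \Phi^{2p-1}W^2,U\rangle_{L^2(\Gamma)}$, substitute the decomposition (\ref{r1}) into (\ref{energy-expansion}), identify the pure-$c$ cubic part with $M_0(c)$, and bound all mixed monomials via the Banach algebra property of $H^1(\Gamma)$ together with the a priori bound (\ref{apriori-bound-c-b}), reducing the non-listed monomials to the listed ones by Young's inequality --- which is exactly what the paper means by ``only the end-point bounds are incorporated into the estimate (\ref{energy-estimate})''. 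Your extra details --- the collapse of the quadratic forms using $L_+(\omega)U^{(j)}_{\omega}=0$, $L_-(\omega)W^{(j)}_{\omega}=U^{(j)}_{\omega}$, the symplectic orthogonality (\ref{orth-decomposition-time-second}), and the scaling treatment of the $\omega$-dependence of the cubic coefficients --- are correct and, if anything, spelled out more carefully than in the paper's own terse write-up.
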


\begin{proof}
For every $p \geq \frac{1}{2}$, Taylor expansion of $N_{\omega}(U,W)$ yields
{\small\begin{eqnarray*}
N_{\omega}(U,W) = - \frac{2}{3} p(p+1)(2p+1)\langle \Phi^{2p-1} U^2, U \rangle_{L^2(\Gamma)}
- 2 p(p+1) \langle \Phi^{2p-1} W^2, U \rangle_{L^2(\Gamma)} + S_{\omega}(U,W),\label{N-expansion}
\end{eqnarray*}}
where
\begin{eqnarray*}
S_{\omega}(U,W) = \left\{ \begin{array}{l} {\rm o}(\| U + i W \|_{H^1(\Gamma)}^3), \quad p \in \left(\frac{1}{2},1\right), \\
{\rm O}(\| U + i W \|_{H^1(\Gamma)}^4), \quad p \geq 1. \end{array} \right.
\end{eqnarray*}
is a continuation of $S(U,W)$ defined by (\ref{Lambda-nonlinear}) with respect to $\omega$.
The expansion (\ref{energy-expansion-second}) holds by substituting of (\ref{r1}) into (\ref{energy-expansion})
and estimating the remainder terms thanks to Banach algebra property of $H^1(\Gamma)$ and
the assumption (\ref{apriori-bound-c-b}). Only the end-point bounds
are incorporated into the estimate (\ref{energy-estimate}).
\end{proof}

We bring (\ref{energy-1}) and (\ref{energy-expansion-second}) together as follows:
\begin{eqnarray}
\nonumber
\Delta_0 - H_0(c,b) & = & D(\omega) - \left( \omega - 1 \right) \left[ Q(\Phi + U_0 + i W_0) - Q(\Phi)\right] \\
& \phantom{t} & + \langle L_+(\omega) U^{\perp}, U^{\perp} \rangle_{L^2(\Gamma)} + \langle L_-(\omega) W^{\perp}, W^{\perp}
\rangle_{L^2(\Gamma)} + \widetilde{\Delta}(c,b,U^{\perp},W^{\perp}),
\label{energy-3}
\end{eqnarray}
where $H_0(c,b)$ is given by (\ref{Hamiltonian-0}).
Recall that the energy $E(\Psi)$ and mass $Q(\Psi)$ are bounded in $H^1_{\Gamma}$,
whereas $\Phi$ is a critical point of $E$ under fixed $Q$. Thanks to the bound (\ref{initial-data}) on the initial data,
the orthogonality (\ref{initial-data-constraints}), and
the representation (\ref{energy-2}), there is an $\delta$-independent constant $A > 0$ such that
\begin{equation}
\label{bound-Delta-0}
|\Delta_0 | + |Q(\Phi + U_0 + i W_0) - Q(\Phi)| \leq A \delta^2.
\end{equation}
Thanks to the representations (\ref{energy-cube}) and (\ref{Hamiltonian-0}),
there is a generic constant $A > 0$ such that
\begin{equation}
\label{bound-H-0}
|H_0(c,b) | \leq A \left( \| c \|^3 + \| b \|^2 \right).
\end{equation}
The value of $\omega$ near $\omega_0 = 1$ and the remainder terms $U^{\perp},W^{\perp}$ in the $H^1(\Gamma)$ norm
can be controlled in the time evolution of the NLS equation (\ref{eq1}) by using the energy expansion (\ref{energy-3}).
The following lemma presents this result.

\begin{lemma}
\label{lem-remainder-last}
Consider a solution to the NLS with $p \geq \frac{1}{2}$ given by (\ref{orth-decomposition-time}) and (\ref{r1})
with $\omega(t) \in C^1([0,t_0],\mathbb{R})$,
$c(t),b(t) \in C^1([0,t_0],\mathbb{R}^{N-1})$, and $U^{\perp}(t), W^{\perp}(t) \in C([0,t_0],H^1_{\Gamma})$
satisfying the bound (\ref{apriori-bound-c-b}) for sufficiently small $\epsilon > 0$.
Then, there exists an $\epsilon$-independent constant $A > 0$ such that for every $t \in [0,t_0]$,
\begin{equation}
\label{bound-omega-U-W}
|\omega - 1|^2 +  \| U^{\perp} + i W^{\perp} \|_{H^1(\Gamma)}^2 \leq A
 \left[ \delta^2 + |H_0(c,b)| +  \mu(\| c \|) + \| c \| \| b \|^2 + \| b \|^3 \right],
\end{equation}
where $\mu(\| c \|)$ is the same as in (\ref{mu-remainder}).
\end{lemma}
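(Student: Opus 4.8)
The plan is to convert the energy identity (\ref{energy-3}) into a coercive lower bound for $|\omega-1|^2 + \|U^{\perp} + i W^{\perp}\|_{H^1(\Gamma)}^2$ by collecting the three manifestly sign-definite terms on one side and treating everything else as a source. First I would rearrange (\ref{energy-3}) as
\begin{equation*}
D(\omega) + \langle L_+(\omega) U^{\perp}, U^{\perp}\rangle_{L^2(\Gamma)} + \langle L_-(\omega) W^{\perp}, W^{\perp}\rangle_{L^2(\Gamma)} = \Delta_0 - H_0(c,b) + (\omega-1)\left[Q(\Phi + U_0 + i W_0) - Q(\Phi)\right] - \widetilde{\Delta}(c,b,U^{\perp},W^{\perp}),
\end{equation*}
and bound the left-hand side from below. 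For the first term I use (\ref{energy-0}): since $\langle \Phi, \partial_{\omega}\Phi_{\omega}|_{\omega=1}\rangle_{L^2(\Gamma)} > 0$ for $p \in (0,2)$ and $\tilde{D}(\omega) = \mathcal{O}(|\omega-1|^3)$, the a priori smallness (\ref{apriori-bound-c-b}) yields $D(\omega) \geq \frac{1}{2}\langle \Phi, \partial_{\omega}\Phi_{\omega}|_{\omega=1}\rangle_{L^2(\Gamma)}(\omega-1)^2$ once $\epsilon$ is small. For the two quadratic forms I invoke the coercivity bounds (\ref{coercivity-L-plus-dual}) and (\ref{coercivity-minus}): the symplectic orthogonality (\ref{orth-decomposition-time-second}) places $U^{\perp}$ in $[X_c^*]^{\perp}$, and together with the constraint (\ref{orth-decomposition-time-constraints}) on $W$ it places $W^{\perp}$ in $(L^2_c)^*$, so both lemmas apply. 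Those lemmas are stated at $\omega = 1$, so a secondary point is to continue them to $\omega$ near $1$: because $L_{\pm}(\omega)$ and the basis vectors $U^{(j)}_{\omega}, W^{(j)}_{\omega}$ depend smoothly on $\omega$ through the scaling (\ref{scaling-transform}), the spectral gap persists and the coercivity constants can be chosen uniform for $|\omega-1|$ small. Altogether the left-hand side is bounded below by $c_0(|\omega-1|^2 + \|U^{\perp}\|_{H^1(\Gamma)}^2 + \|W^{\perp}\|_{H^1(\Gamma)}^2)$ for some $c_0 > 0$.

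Then I would bound the right-hand side term by term. The first two are kept: $|\Delta_0| \leq A\delta^2$ by (\ref{bound-Delta-0}), while $|H_0(c,b)|$ appears verbatim in the target (\ref{bound-omega-U-W}). The modulation term $(\omega-1)[Q(\cdot)-Q(\Phi)]$ is controlled by (\ref{bound-Delta-0}) as $A\delta^2|\omega-1|$, and Young's inequality splits it into $\eta|\omega-1|^2 + C_{\eta}\delta^4$, the first piece absorbed into the left and the second dominated by $\delta^2$. The remaining work is the remainder $\widetilde{\Delta}$, estimated by (\ref{energy-estimate}). Writing $u := \|U^{\perp}\|_{H^1(\Gamma)}$ and $w := \|W^{\perp}\|_{H^1(\Gamma)}$, the terms $u^3$, $\|c\|w^2$, and $u w^2$ each carry a factor bounded by $A\epsilon$ via (\ref{apriori-bound-c-b}) and are absorbed into the coercive $u^2, w^2$ on the left. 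The genuinely mixed terms $\|c\|^2 u$ and $\|b\|^2 u$ are split by Young's inequality into $\eta u^2$ (absorbed) plus $C_{\eta}\|c\|^4$ and $C_{\eta}\|b\|^4$; here $\|b\|^4 \leq \|b\|^3$ since $\|b\| \leq A\epsilon < 1$, matching the target, while $\|c\|^4$ is of the order of $\mu(\|c\|)$ from (\ref{mu-remainder}) in both regimes (namely $\|c\|^4 = {\rm o}(\|c\|^3)$ for $p \in (1/2,1)$ and $\|c\|^4 = \mathcal{O}(\|c\|^4)$ for $p \geq 1$), so it merges into the $\mu(\|c\|)$ term. The terms $\mu(\|c\|)$ and $\|c\|\|b\|^2$ are already present in the target.

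Finally, choosing the Young parameter $\eta$ and then $\epsilon$ small enough that all absorbed quadratics consume at most half of $c_0(|\omega-1|^2 + u^2 + w^2)$, I would arrive at
\begin{equation*}
\tfrac{1}{2}c_0\left(|\omega-1|^2 + u^2 + w^2\right) \leq A\left[\delta^2 + |H_0(c,b)| + \mu(\|c\|) + \|c\|\|b\|^2 + \|b\|^3\right],
\end{equation*}
which is the claimed estimate (\ref{bound-omega-U-W}). The main obstacle I anticipate is not any single inequality but the bookkeeping of the cross-terms in $\widetilde{\Delta}$: one must verify that every term containing $U^{\perp}$ or $W^{\perp}$ either absorbs into a coercive quadratic form or leaves behind only a pure $(c,b)$ remainder of the orders already displayed on the right-hand side, and in particular that the spurious quartics $\|c\|^4$ and $\|b\|^4$ produced by Young's inequality are controlled by $\mu(\|c\|)$ and $\|b\|^3$ using $\|c\|,\|b\| \leq A\epsilon < 1$, rather than by any quantity one cannot afford. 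A secondary technical point is the $\omega$-uniform coercivity of $L_{\pm}(\omega)$, which follows from the smooth $\omega$-dependence but should be stated explicitly.
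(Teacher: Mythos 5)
Your proposal is correct and follows essentially the same route as the paper: rearrange the energy identity (\ref{energy-3}), bound the left side from below using the positivity of $D''(1) = 2\langle \Phi, \partial_{\omega}\Phi_{\omega}|_{\omega=1}\rangle_{L^2(\Gamma)}$ together with the coercivity Lemmas \ref{lem-coercivity} and \ref{lem-coercivity-L-minus}, and bound the right side using (\ref{bound-Delta-0}) and (\ref{energy-estimate}). The paper states this in three sentences and leaves the Young-inequality absorption and the $\omega$-uniformity of the coercivity constants implicit; your write-up simply makes that bookkeeping explicit, and it checks out.
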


\begin{proof}
The bound on $|\omega-1|^2$ follows from (\ref{energy-0}), (\ref{energy-estimate}), (\ref{energy-3}), and (\ref{bound-Delta-0})
thanks to the positivity of $D''(1) = 2 \langle \Phi, \partial_{\omega} \Phi_{\omega} |_{\omega = 1} \rangle_{L^2(\Gamma)}$.
The bounds on $\| U^{\perp} \|_{H^1(\Gamma)}^2$ and $\| W^{\perp} \|_{H^1(\Gamma)}^2$ follow
from (\ref{energy-estimate}), (\ref{energy-3}), and (\ref{bound-Delta-0})
thanks to the coercivity of $L_+(\omega)$ and $L_-(\omega)$ in Lemmas \ref{lem-coercivity} and \ref{lem-coercivity-L-minus}.
\end{proof}

\subsection{Step 5: Closing the energy estimates}

By Lemma \ref{lem-instability-Ham}, there exists a trajectory of the finite-dimensional system
(\ref{normal-form-time}) near the zero equilibrium which leaves the $\epsilon$-neighborhood
of the zero equilibrium. This nonlinear instability developes over the time span $[0,t_0]$ with
$t_0 = \mathcal{O}(\epsilon^{-1/2})$.
The second equation of system (\ref{normal-form-time}) shows that if $\gamma(t) = \mathcal{O}(\epsilon)$
for $t \in [0,t_0]$ and $t_0 = \mathcal{O}(\epsilon^{-1/2})$, then $\beta(t) = \mathcal{O}(\epsilon^{3/2})$
for $t \in [0,t_0]$. It is also clear that the scaling above is consistent with the first equation
of system (\ref{normal-form-time}). The scaling above suggests to consider the following region in the phase space
$\mathbb{R}^{N-1} \times \mathbb{R}^{N-1}$:
\begin{equation}
\label{bound-coefficients-apriori}
\| c(t) \| \leq A \epsilon, \quad \| b(t) \| \leq A \epsilon^{3/2}, \quad t \in [0,t_0], \quad t_0 \leq A \epsilon^{-1/2},
\end{equation}
for an $\epsilon$-independent constant $A > 0$.
The region in (\ref{bound-coefficients-apriori}) satisfies a priori assumption (\ref{apriori-bound-c-b}) for $c$ and $b$.
The following result shows that a trajectory
of the full system (\ref{system-U-W}) follows closely to the trajectory of the finite-dimensional system (\ref{normal-form-time})
in the region (\ref{bound-coefficients-apriori}).

\begin{lemma}
\label{lem-correction}
Consider a solution $\gamma(t),\beta(t) \in C^1([0,t_0],\mathbb{R}^{N-1})$ to
the finite-dimensional system (\ref{normal-form-time}) in the region (\ref{bound-coefficients-apriori})
with sufficiently small $\epsilon > 0$.
Then, a solution $c(t),b(t) \in C^1([0,t_0],\mathbb{R}^{N-1})$ to system (\ref{system-U-W})
remains in the region (\ref{bound-coefficients-apriori}) and there exist an $\epsilon$-independent constant
$A > 0$ such that
\begin{equation}
\label{bound-coefficients}
\| c(t) - \gamma(t) \| \leq A \nu(\epsilon), \quad \| b(t) - \beta(t) \| \leq A \epsilon^{1/2} \nu(\epsilon), \quad t \in [0,t_0],
\end{equation}
where
\begin{equation}
\label{nu}
\nu(\epsilon) = \left\{ \begin{array}{l} {\rm o}(\epsilon), \quad p \in \left(\frac{1}{2},1\right), \\
{\rm O}(\epsilon^{3/2}), \quad p \geq 1. \end{array} \right.
\end{equation}
\end{lemma}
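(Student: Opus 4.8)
The plan is to regard the full amplitude system (\ref{system-U-W}) as a forced perturbation of the truncated system (\ref{normal-form-time}) and to control the difference by a Duhamel--Gronwall estimate tailored to the slow time scale $t_0=\mathcal{O}(\epsilon^{-1/2})$. I take the two solutions to share initial data, i.e. $\gamma(0)=c(0)$ and $\beta(0)=b(0)$; concretely one realizes the unstable initial point of Lemma \ref{lem-instability-Ham} as NLS data by choosing $U_0\in X_c$ and $W_0\in X_c^*$, which satisfy the constraints (\ref{initial-data-constraints}) and give $U^\perp(0)=W^\perp(0)=0$ and $\omega(0)=1$. I also take $\delta$ small relative to the fixed $\epsilon$, so that the initial contribution $\delta^2$ in Lemma \ref{lem-remainder-last} is $o(\epsilon^3)$. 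Writing $\zeta:=c-\gamma$ and $\eta:=b-\beta$, we then have $\zeta(0)=\eta(0)=0$.

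First I would match the leading terms. Substituting the decomposition (\ref{r1}) into (\ref{RU-term-leading})--(\ref{RW-term-leading}), the term $\langle U^{(j)}_\omega,R_W\rangle$ in $R_b^{(j)}$ reproduces, at leading order and at $\omega=1$, the quadratic right-hand side of (\ref{normal-form-time}) with $\gamma$ replaced by $c$. Collecting everything else defines forcing terms $\mathcal{E}_c^{(j)}$ in the $\dot c_j-b_j$ equation and $\mathcal{E}_b^{(j)}$ in the $\dot b_j$ equation, so that $\dot\zeta_j=\eta_j+\mathcal{E}_c^{(j)}$ and $\dot\eta_j=\langle W^{(j)},U^{(j)}\rangle^{-1}[P_j(c)-P_j(\gamma)]+\mathcal{E}_b^{(j)}$, where $P_j$ denotes the quadratic map from (\ref{normal-form-time}). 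I would then estimate the forcing in the region (\ref{bound-coefficients-apriori}): Lemma \ref{lem-remainder-last} together with $\|c\|=\mathcal{O}(\epsilon)$, $\|b\|=\mathcal{O}(\epsilon^{3/2})$ and $\delta^2=o(\epsilon^3)$ gives $|\omega-1|+\|U^\perp+iW^\perp\|_{H^1}=\mathcal{O}(\epsilon^{3/2})$, while Lemma \ref{lem-parameters} gives $|\dot\theta-\omega|=\mathcal{O}(\epsilon^2)$ and $|\dot\omega|=\mathcal{O}(\epsilon^{5/2})$. Feeding these in, the $\dot\omega$, $(\dot\theta-\omega)$, $W^2$, $U^\perp$ and $\Phi_\omega$-versus-$\Phi$ contributions are all of higher order, so the size of the forcing is dictated by the quadratic truncation remainders $\tilde R_U,\tilde R_W$ and by $\Phi^{2p-1}UW$; this is exactly where the dichotomy (\ref{nu}) enters, yielding $\mathcal{E}_b=o(\epsilon^2)$, $\mathcal{E}_c=o(\epsilon^2)$ for $p\in(\tfrac12,1)$ and $\mathcal{E}_b=\mathcal{O}(\epsilon^3)$, $\mathcal{E}_c=\mathcal{O}(\epsilon^{5/2})$ for $p\geq1$.

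The crucial step is to close the estimate over the long interval $t_0=\mathcal{O}(\epsilon^{-1/2})$, where a naive Gronwall factor $e^{Ct_0}$ would be useless. The resolution is that the linearization of (\ref{normal-form-time}) about its $\mathcal{O}(\epsilon)$ trajectory has coefficient matrix of block form $\left(\begin{smallmatrix}0&I\\ B&0\end{smallmatrix}\right)$ with $\|B\|=\mathcal{O}(\epsilon)$, hence growth rate only $\mathcal{O}(\epsilon^{1/2})$, so its fundamental solution is $\mathcal{O}(1)$ on $[0,t_0]$. I would make this quantitative by rescaling $\tau=\epsilon^{1/2}t$, $\tilde\zeta=\epsilon^{-1}\zeta$, $\tilde\eta=\epsilon^{-3/2}\eta$, which renders the linearized flow $\epsilon$-independent and $\mathcal{O}(1)$ over the bounded interval $\tau\in[0,\mathcal{O}(1)]$ and turns the forcing into $\epsilon^{-3/2}\mathcal{E}_c$ and $\epsilon^{-2}\mathcal{E}_b$, both of which are $o(1)$ by the bounds above. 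Applying Duhamel with the $\mathcal{O}(1)$ fundamental solution, absorbing the genuinely quadratic remainder $\mathcal{O}(\|\tilde\zeta\|^2)$ by smallness, and using $\zeta(0)=\eta(0)=0$ with Gronwall over $\tau\in[0,\mathcal{O}(1)]$ gives $\|\tilde\zeta\|+\|\tilde\eta\|=o(1)$; undoing the scaling is precisely the claimed bound (\ref{bound-coefficients}) with $\nu(\epsilon)$ as in (\ref{nu}). The whole argument is run inside a continuation (bootstrap): assuming (\ref{bound-coefficients-apriori}) with a doubled constant on a maximal subinterval makes Lemmas \ref{lem-parameters} and \ref{lem-remainder-last} applicable, and the resulting closeness forces $\|c\|\leq\|\gamma\|+o(\epsilon)$ and $\|b\|\leq\|\beta\|+o(\epsilon^{3/2})$, strictly inside the region with the original constant, so by continuity the subinterval is all of $[0,t_0]$.

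The hard part is this long-time closure: one must recognize that the unstable-mode growth rate is only $\mathcal{O}(\epsilon^{1/2})$ and match the two amplitude scales $\|c\|\sim\epsilon$, $\|b\|\sim\epsilon^{3/2}$ through the rescaling, so that the accumulated truncation error over $t_0=\mathcal{O}(\epsilon^{-1/2})$ remains within $o(\epsilon)$ for $\|c\|$. The remaining work --- expanding $R_U,R_W$, projecting onto the dual basis, and tracking the orders of every remainder --- is lengthy but routine, relying on Lemmas \ref{lem-parameters} and \ref{lem-remainder-last} and the Banach algebra property of $H^1(\Gamma)$.
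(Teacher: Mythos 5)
Your proposal is correct and follows essentially the same route as the paper's proof: the same initial-data matching ($c(0)=\gamma(0)$, $b(0)=\beta(0)$, with $\delta=\mathcal{O}(\epsilon^{3/2})$), the same subtraction of (\ref{normal-form-time}) from (\ref{system-U-W}) with the forcing controlled via Lemmas \ref{lem-parameters} and \ref{lem-remainder-last}, and your rescaling $\tau=\epsilon^{1/2}t$, $\tilde\zeta=\epsilon^{-1}\zeta$, $\tilde\eta=\epsilon^{-3/2}\eta$ is exactly the paper's Gronwall estimate for the weighted quantity $\| b-\beta\| + A\epsilon^{1/2}\| c-\gamma\|$, both resting on the observation that the growth rate $\mathcal{O}(\epsilon^{1/2})$ keeps the exponential factor $\mathcal{O}(1)$ over $t_0=\mathcal{O}(\epsilon^{-1/2})$. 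One bookkeeping slip, though harmless: for $p\geq 1$ the cross term $\Phi^{2p-1}\bigl(\sum_k c_k U^{(k)}\bigr)U^{\perp}$ in the $b$-equation is $\mathcal{O}(\epsilon\cdot\epsilon^{3/2})=\mathcal{O}(\epsilon^{5/2})$, not higher order, so the correct size of your $\mathcal{E}_b$ is $\mathcal{O}(\epsilon^{5/2})$ (the paper's $\| G(c,b)\|\leq A\epsilon\nu(\epsilon)$) rather than $\mathcal{O}(\epsilon^{3})$; since $\epsilon^{-2}\mathcal{E}_b=\mathcal{O}(\epsilon^{1/2})=o(1)$, your argument still closes and yields the stated bounds (\ref{bound-coefficients}).
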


\begin{proof}
By the bounds (\ref{bound-H-0}) and (\ref{bound-omega-U-W}),
as well as a priori assumption (\ref{bound-coefficients-apriori}),
there exists an $(\delta,\epsilon)$-independent constant $A > 0$ such that
\begin{equation}
\label{bound-omega-U-W-new}
|\omega(t) - 1| +  \| U^{\perp}(t) + i W^{\perp}(t) \|_{H^1(\Gamma)} \leq A
 \left( \delta + \epsilon^{3/2} \right), \quad t \in [0,t_0].
\end{equation}
It makes sense to define $\delta = \mathcal{O}(\epsilon^{3/2})$ in
the bound (\ref{initial-data}) on the initial data, which we will adopt here.
By using the decomposition (\ref{r1}) and the bounds (\ref{bound-coefficients-apriori}) and (\ref{bound-omega-U-W-new})
in (\ref{bounds-parameters}), we get
\begin{equation}
\label{bounds-parameters-sharp}
| \dot{\theta} - \omega | \leq A \epsilon^2,
\quad | \dot{\omega} | \leq A \epsilon^{5/2},
\end{equation}
for an $\epsilon$-independent constant $A > 0$. By subtracting the first equation of system
(\ref{normal-form-time}) from the first equation of system (\ref{system-U-W}), we obtain
\begin{equation}
\label{equation-c}
\dot{c}_j - \dot{\gamma}_j = b_j - \beta_j + [F(c,b)]_j,
\end{equation}
where the vector $F(c,b) \in \mathbb{R}^{N-1}$ satisfies the estimate
\begin{equation}
\label{source-F}
\| F(c,b) \| \leq A \epsilon^{5/2},
\end{equation}
thanks to (\ref{RU-term-leading}), (\ref{r1}), (\ref{bound-omega-U-W-new}), and (\ref{bounds-parameters-sharp}).
By subtracting the second equation of system
(\ref{normal-form-time}) from the second equation of system (\ref{system-U-W}), we obtain
\begin{equation}
\label{equation-b}
\dot{b}_j - \dot{\beta}_j = p(p+1) (2p+1) \sum_{k=1}^{N-1} \sum_{n = 1}^{N-1}
\frac{\langle \Phi^{2p-1} U^{(k)} U^{(n)}, U^{(j)} \rangle_{L^2(\Gamma)}}{\langle W^{(j)},U^{(j)} \rangle_{L^2(\Gamma)}}
(c_k c_n - \gamma_k \gamma_n) + [G(c,b)]_j,
\end{equation}
where the vector $G(c,b) \in \mathbb{R}^{N-1}$ satisfies the estimate
\begin{equation}
\label{source-G}
\| G(c,b) \| \leq A \epsilon \nu(\epsilon),
\end{equation}
thanks to (\ref{RW-term-leading}), (\ref{r1}), (\ref{bound-omega-U-W-new}), and (\ref{bounds-parameters-sharp}),
where $\nu(\epsilon)$ is given by (\ref{nu}).

Let us assume than $\gamma(0) = c(0)$ and $\beta(0) = \beta(0)$. Integrating equations (\ref{equation-c}) and (\ref{equation-b})
over $t \in [0,t_0]$ with $t_0 \leq A \epsilon^{-1/2}$ in the region (\ref{bound-coefficients-apriori}), we obtain
\begin{equation}
\label{equation-c-integral}
\| c(t) - \gamma(t) \| \leq \int_0^t \| b(t') - \beta(t') \| dt' + A \epsilon^2
\end{equation}
and
\begin{equation}
\label{equation-b-integral}
\| b(t) - \beta(t) \| \leq A \epsilon \int_0^t \| c(t') - \gamma(t') \| dt' + A \epsilon^{1/2} \nu(\epsilon),
\end{equation}
for a generic $\epsilon$-independent constant $A > 0$. Gronwall's inequality for
$$
\| b(t) - \beta(t) \| + A \epsilon^{1/2} \| c(t) - \gamma(t) \|
$$
yields (\ref{bound-coefficients}).
\end{proof}

\begin{proof1}{\em of Theorem \ref{theorem-instability}.}
Let us consider the unstable solution $(\gamma,\beta)$ to the finite-dimensional system (\ref{normal-form-time})
according to Lemma \ref{lem-instability-Ham}. This solution belongs to the region (\ref{bound-coefficients-apriori}).
By Lemma \ref{lem-correction}, the correction terms satisfy (\ref{bound-coefficients}), hence
the solution $(c,b)$ to system (\ref{system-U-W}) also satisfies the bound (\ref{bound-coefficients-apriori})
over the time span $[0,t_0]$ with $t_0 = \mathcal{O}(\epsilon^{-1/2})$.

By Lemma \ref{lem-remainder-last} and the elementary continuation argument,
the components $\omega$, $U^{\perp}$, and $W^{\perp}$ satisfy the bound (\ref{bound-omega-U-W-new})
with $\delta = \mathcal{O}(\epsilon^{3/2})$, so that the solution to the NLS equation (\ref{eq1})
given by (\ref{orth-decomposition-time}) and (\ref{r1}) satisfies the bound (\ref{apriori-bound}) for $t \in [0,t_0]$.

Finally, the solution $\gamma$ to the finite-dimensional system (\ref{normal-form-time})
grows in time and reaches the boundary in the region (\ref{bound-coefficients-apriori})
by Lemma \ref{lem-instability-Ham}. The same is true for the full solution to the NLS equation (\ref{r1})
thanks to the bounds (\ref{bound-coefficients}) and (\ref{bound-omega-U-W-new}).
Hence, the solution starting with the initial data
satisfying the bound (\ref{initial-data}) with $\delta = \mathcal{O}(\epsilon^{3/2})$
reaches and crosses the boundary in (\ref{orbital-instab}) for some $t_0 = \mathcal{O}(\epsilon^{-1/2})$.
\end{proof1}

\end{document}